\documentclass[11pt]{amsart}
\usepackage{graphicx}
\usepackage{tikz}
\usepackage{float}
\usepackage{mathrsfs}
\usepackage{color}
\usepackage{amssymb}
\usepackage{amsmath}
\usepackage[margin=1.4in]{geometry}
\usepackage[hyperindex,breaklinks]{hyperref}
\hypersetup{colorlinks=true,linkcolor=blue,citecolor=blue,urlcolor=blue,
 pdftitle={Index-one capillary minimal hypersurfaces in R\string^n\string_+}}

\newtheorem{prop}{Proposition}
\newtheorem{theo}{Theorem}
\renewcommand{\thetheo}{\arabic{theo}}
\newtheorem*{theo*}{Theorem}
\newtheorem{lemm}[prop]{Lemma}

\theoremstyle{plain}

\theoremstyle{definition}

\DeclareMathOperator{\Ind}{Ind}
\DeclareMathOperator{\Vol}{Vol}

\title[Index-one capillary minimal hypersurfaces in \(\mathbb R^n_+\)]{%
Index-one capillary minimal hypersurfaces in \(\mathbb R^n_+\)}
\author{Chenghang Lu}
\address{(C.L.)Albert-Ludwigs-Universit\"at Freiburg,
Mathematisches Institut, Ernst-Zermelo-Str.~1,
D-79104 Freiburg, Germany}
\email{chenghang.lu@math.uni-freiburg.de}
\thanks{C.L. is supported by the DFG through
IRTG 3132 (Grant No.~545046569).}
\date{\today}
\subjclass[2020]{53A10, 53C42, 58E12}
\keywords{Capillary minimal hypersurface, Morse index one, classification,
finite total curvature,  catenoid, capillary hairpin}

\begin{document}
\begin{abstract}
We classify complete, two-sided, properly embedded capillary minimal
surfaces with nonempty boundary and Morse index one in
\(\mathbb R^3_+\). More precisely,
if the boundary is compact, these surfaces are catenoid truncations;
if the boundary is noncompact, they are capillary hairpin surfaces.
The classification extends to higher dimensions under extrinsic
Euclidean volume growth and Wang--Zhang's dimension and angle conditions.
\end{abstract}
\maketitle
\section{Introduction}
Consider a complete, two-sided, properly embedded capillary minimal
hypersurface \(M^n\subset\mathbb R^{n+1}_+\), \(n\ge2\), with nonempty boundary
and contact angle
\(\theta\in(0,\pi)\), where
\(\mathbb R^{n+1}_+=\{x\in\mathbb R^{n+1}:x_{n+1}\ge0\}\).
For \(f\in C_c^\infty(M)\), we write
\begin{equation*}
 Q(f,f)=\int_M\bigl(|\nabla f|^2-|h|^2f^2\bigr)\,dA
 -\int_{\partial M}qf^2\,ds,\qquad q=\cot\theta\,h(\mu,\mu).
\end{equation*}
Here \(h\) is the second fundamental
form and \(\mu\) is the outward unit conormal along \(\partial M\).
The space \(C_c^\infty(M)\) consists of smooth compactly supported
functions on \(M\), whose support may meet \(\partial M\).
We write \(Q(f)=Q(f,f)\) and use \(Q(f,g)\) for the associated symmetric
bilinear form. The  capillary Morse index is defined by
\begin{equation*}
 \Ind_\theta(M)=\sup\{\dim V:
 Q(f,f)<0\ \text{for every }0\ne f\in V\},
\end{equation*}
where \(V\) ranges over finite-dimensional subspaces of \(C_c^\infty(M)\).

We say that \(M\) has finite total curvature if
\(\int_M|h|^n\,dA<\infty\), and extrinsic Euclidean volume growth if
\(\Vol(M\cap B_R(0))\le CR^n\) for \(R\ge1\), where \(B_R(0)\) is an
ambient Euclidean ball. We prove the following results.

\begin{theo}\label{index-one-surface}
Let \(M^2\subset\mathbb R^3_+\) be a connected, complete, two-sided,
properly embedded capillary minimal surface with nonempty boundary and contact angle
\(\theta\in(0,\pi)\) and \(\Ind_\theta(M)=1\).
Then, up to dilations and rigid motions,
\begin{enumerate}
\item if \(\partial M\) is compact, then \(M\) is a truncation of the catenoid;
\item if \(\partial M\) is noncompact, then \(M\) is the capillary hairpin surface.
\end{enumerate}
\end{theo}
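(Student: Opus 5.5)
The plan follows the classical route for index-one minimal surfaces in $\mathbb R^3$ (López--Ros, Fischer-Colbrie): establish finite total curvature, use a Gauss map / harmonic-vector-field argument to produce test functions, and conclude with a rigidity statement at the boundary.

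\textbf{Step 1: finite total curvature and conformal structure.} Since $\Ind_\theta(M)=1<\infty$, I will adapt Fischer-Colbrie's argument to the capillary setting. Outside a compact set $K$, $M\setminus K$ is capillary-stable, so there is a positive function $u$ with $\Delta u+|h|^2u=0$ and $\partial_\mu u=qu$ on $\partial M\setminus K$. The metric $u^2g$ is then complete and has nonnegative curvature with convex (geodesic-curvature-controlled) boundary. From this I expect parabolicity and a conformal model: $M$ is conformally a compact Riemann surface with boundary, with finitely many punctures at interior points and at boundary points, and
\[
\int_M|h|^2\,dA<\infty .
\]
When $\partial M$ is compact, the ends are interior punctures. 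When $\partial M$ is noncompact, some ends meet the boundary, so they are half-planar/half-catenoidal pieces bounded by lines in $\partial\mathbb R^3_+$. The Gauss map $N$ then extends meromorphically. By a reflection-type argument along $\partial M$, its boundary values lie on the circle $\{\langle N,e_3\rangle=-\cos\theta\}$ (up to sign convention).

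\textbf{Step 2: index bound from test functions.} I will use the capillary Jacobi fields. Translations tangent to $\partial\mathbb R^3_+$ give the fields $\langle N,e_1\rangle$ and $\langle N,e_2\rangle$, and these satisfy the capillary Robin condition. The vertical direction gives the modified field $\langle N,e_3\rangle+\cos\theta$, or more precisely $\langle N, e_3\rangle$ combined with the tangential correction, which does not satisfy the boundary condition; I will only need it as a test function with $Q$ computed via boundary terms. The key idea, following López--Ros and Chodosh--Maximo, is to compose the extended Gauss map with harmonic/holomorphic data: the space of functions $\langle N,a\rangle$ for $a\in\mathrm{span}(e_1,e_2)$, cut off logarithmically at ends, gives $Q$-null directions. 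The index is then at least the number of independent negative directions produced by the nodal domains of these functions. Index one forces each $\langle N,a\rangle$, $a$ horizontal, to have exactly two nodal domains. It also forces the genus, the number of ends, and the number of boundary components to be small. Concretely, I will show via a Riemann--Roch dimension count for meromorphic functions with poles at the ends (Montiel--Ros type) that
\[
\Ind_\theta(M)\ \ge\ \text{(degree of the extended Gauss map)}\ -\ \text{(correction)} .
\]
Index one then forces the degree to be $1$ and the topology to be that of a sphere or disk with at most two ends.

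\textbf{Step 3: classification.} With degree one, $M$ is a piece of a minimal surface whose Gauss map is a degree-one map onto its image. If $\partial M$ is compact, I will reflect across $\partial\mathbb R^3_+$ or use Schoen's characterization: two embedded catenoidal ends with a degree-one Gauss map force the catenoid, and the capillary condition selects a truncation by a horizontal plane. If $\partial M$ is noncompact, the boundary ends correspond to parallel lines, and the degree-one constraint together with the boundary circle condition identifies the capillary hairpin. Planes are excluded because a half-plane has index $0$.

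\textbf{Main obstacle.} I expect Step 2's index lower bound to be the hardest part. The López--Ros argument relies on Jacobi fields from all directions, but in the half-space only horizontal translations preserve the capillary condition. The Robin term $q=\cot\theta\,h(\mu,\mu)$ must also be handled at boundary punctures. I would try first to use the doubled Gauss-map picture: view $N$ on the Schottky double, where the boundary condition becomes a reflection symmetry. The index count can then be done for symmetric meromorphic functions, so that it reduces to the known closed count.
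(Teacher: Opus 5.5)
\textbf{Verdict: the proposal has a genuine gap.} It sits where you suspected: Step~2 is a list of intentions rather than an argument, and the mechanism you sketch does not give the bound you need.

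The fields $\langle\nu,a\rangle$ with $a$ horizontal are already capillary Jacobi fields, so by themselves they give no negative directions. Through nodal domains they only give $\Ind_\theta(M)\ge(\#\text{nodal domains})-1$. Those nodal domains are components of the preimage of a hemisphere under the Gauss map. For a meromorphic map of higher degree such a preimage can be connected once branch values lie in the hemisphere (Riemann--Hurwitz). So ``exactly two nodal domains'' does not force degree one, and the inequality $\Ind\ge\deg G-(\text{correction})$ is never actually produced.

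The missing idea is the harmonic one-form method of Ros and Chodosh--M\'aximo on the Schottky double, as in Lemma~\ref{surface-index-bound}. One takes meromorphic one-forms with at most double poles at the ends that are real on boundary tangents, so that $\alpha=\operatorname{Re}\omega$ satisfies $\alpha(\mu)=0$. The test functions are the coordinates $\xi_j$ of $\xi=\alpha^\sharp$, not Gauss-map components. The capillary input is the boundary identity $\langle\nabla_\mu\xi,\xi\rangle=q|\xi|^2$, which cancels the Robin term so that $\sum_jQ(\eta_R\xi_j)=\int|\nabla\eta_R|^2|\xi|^2\to0$. Riemann--Roch gives dimension $2g+b+4e_i+2e_b-2$. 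Ros's lemma, together with Schwarz reflection of $G$ (using $|G|=k$ on $\partial M$), shows that forms whose $\xi_j$ are Jacobi fields lie in $\operatorname{span}\{*dx_3\}$. The result is a topological bound, $3\Ind_\theta(M)\ge2g+b+4e_i+2e_b-3$, not a degree bound. It also relies on the end regularity of Appendix~\ref{sec:surface-ends-proof} to fix the pole orders.

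Even with that bound, index one does not determine the topology, and your plan has no way to exclude what remains:
\begin{itemize}
\item $e_b=1$, which the paper excludes using the area ratio $e_i+\frac{e_b}{2}$ and monotonicity;
\item $b=2$, in both the compact and noncompact cases.
\end{itemize}
Two further ideas are needed here.

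First, for $\theta\ne\frac{\pi}{2}$ you cannot reflect across $\partial\mathbb R^3_+$, since the doubled surface has a crease. So neither ``reflect'' nor Schoen's theorem applies directly. The paper uses the L\'opez--Ros deformation $G\mapsto G/k$ (Lemma~\ref{surface-periods}), which keeps the height function. It is period-free when $H_1(M;\mathbb Z)$ is generated by compact boundary circles, because $|G|=k$ forces $\int_\Gamma G\omega=\int_\Gamma\omega/G=0$. It produces a free-boundary surface whose reflection is complete and embedded with two ends. Schoen's theorem makes this a catenoid, and the genus count $2g+b-1=0$ rules out $b=2$. This covers the compact case and the noncompact case with both punctures on one circle.

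Second, suppose the two boundary punctures lie on different circles. Gauss--Bonnet gives $\int_\Gamma\kappa\,ds=0$ on each asymptotically straight boundary curve, hence $\int_{\partial M}h(\mu,\mu)\,ds=0$. Then $Q(1)<0$ together with $Q(1,\nu_3)=Q(\nu_3)=0$ forces $\Ind_\theta(M)\ge2$ (Lemma~\ref{boundary-curvature-index}).

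Finally, your hairpin identification (``parallel lines, degree one'') is not a derivation. The paper writes $x_3$ as a sum of two boundary Poisson kernels on the disk. It shows that $\omega=2\partial x_3$ has exactly one simple zero. From this it deduces $G=kz^{\pm1}$, $a_1=a_2$ and $p_2=-p_1$, and then integrates to obtain the parametrization.
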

We extend this classification to higher dimensions under
extrinsic Euclidean volume growth and the following dimension
and angle conditions of Wang--Zhang~\cite{WZ26}.
\begin{equation}\label{stable-cone-range}
 \begin{cases}
 0<\theta<\pi, & n=3,4,\\
 73.336^\circ<\theta<106.664^\circ, & n=5,\\
 85.420^\circ<\theta<94.580^\circ, & n=6.
 \end{cases}
\end{equation}

\begin{samepage}
\begin{theo}\label{index-one}
Let \(M^n\subset\mathbb R^{n+1}_+\) be a connected, complete, two-sided,
properly embedded capillary minimal hypersurface with nonempty boundary,
contact angle
\(\theta\), extrinsic Euclidean volume growth, and \(\Ind_\theta(M)=1\).
Suppose that \((n,\theta)\) satisfies \eqref{stable-cone-range}.
Then, up to dilations and rigid motions,
\begin{enumerate}
\item if \(\partial M\) is compact, then \(M\) is a truncation of the
\(n\)-dimensional catenoid;
\item if \(\partial M\) is noncompact, then \(M\) is the
\(n\)-dimensional capillary hairpin surface.
\end{enumerate}
\end{theo}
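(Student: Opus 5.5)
Here is the plan. I first reduce to finite total curvature and then follow the Lopez--Ros/Chodosh--Maximo strategy in its capillary form.

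\textbf{Step 1: finite total curvature and regular ends.} I take the stable-cone classification of Wang--Zhang under \eqref{stable-cone-range} as input: stable capillary minimal cones in \(\mathbb R^{n+1}_+\) are flat. Together with the extrinsic volume growth, this makes every tangent cone at infinity of \(M\) a finite union of multiplicity-one half-hyperplanes or hyperplanes. Index one implies stability outside a compact set. Curvature estimates for stable capillary hypersurfaces (again from the Wang--Zhang range, via blow-up) then give \(|h|(x)\le C/|x|\) away from a compact set. The ends are therefore graphical over hyperplanes or half-hyperplanes, with decay \(|h|=O(|x|^{-n})\), so \(\int_M|h|^n<\infty\). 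In dimension two no volume-growth hypothesis is needed, because finite index forces finite total curvature; this is the capillary analogue of Fischer-Colbrie, proved via a conformal change and stability outside a compact set. So Theorem \ref{index-one-surface} enters the same scheme.

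\textbf{Step 2: reflection to a boundaryless problem and Gauss map analysis.} With finite total curvature, each end is asymptotic to one of two models. It is a catenoidal or planar end meeting \(\partial\mathbb R^{n+1}_+\) at angle \(\theta\), or it is a half-planar end whose boundary runs off to infinity. I then bound the index from below by constructing test functions. For \(n=2\) I would use the Gauss map \(N\) together with the capillary boundary condition. The components \(\langle N,a\rangle\) are Jacobi fields, and along \(\partial M\) they satisfy the Robin condition \(\partial_\mu u=q u\) only for \(a\) tangent to \(\partial\mathbb R^3_+\) (horizontal \(a\)). This gives a two-dimensional space of Jacobi fields. A Lopez--Ros/Montiel--Ros-type argument then follows: harmonic maps built from \(N\) restricted to a compactification of \(M\), with degree counting, yield index \(\ge 2\) unless \(M\) has exactly one end (compact boundary) or exactly two half-planar ends sharing the boundary (noncompact boundary). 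In higher dimensions I would instead use Tysk/Chodosh--Maximo-type estimates: index \(\ge\) (number of ends) \(-1\), obtained by gluing cutoffs of harmonic functions that are asymptotically constant on each end and satisfy the Robin/Neumann condition. Index one then forces two ends in a suitable doubled count.

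\textbf{Step 3: rigidity.} With one catenoidal end and compact boundary, I would show that \(M\) is rotationally symmetric about a vertical axis. The tool is Alexandrov moving planes with vertical planes, reflecting in planes perpendicular to \(\partial\mathbb R^{n+1}_+\). This is compatible with the capillary condition, and the asymptotics from Step 1 start the procedure. The rotational capillary minimal hypersurfaces are exactly the truncated catenoids. In the noncompact-boundary case, moving planes in the horizontal direction parallel to the boundary line give a symmetry. Translation invariance, or uniqueness of the ODE, then identifies \(M\) with the capillary hairpin.

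\textbf{Main obstacle.} I expect the hardest step to be Step 2: producing a genuinely second negative direction from end counting in the capillary setting. The natural Jacobi fields fail the Robin condition for vertical translations, and half-planar ends have slow decay, so the cutoff errors may be only marginal. To handle this I would first work with the horizontal-translation Jacobi fields and rotations about vertical axes, and fall back on harmonic-function test functions if these prove insufficient.
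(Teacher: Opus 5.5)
You have two genuine gaps, one in each boundary case, and both are at the points where the paper does its main work.

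\emph{Compact boundary.} Your Step~3 needs a single end, but your end count cannot deliver one. Your own bound $\Ind_\theta(M)\ge N-1$ gives only $N\le2$; the catenoid shows this is sharp without boundary. It also says nothing about the number $B$ of boundary components. The ``doubled count'' would require reflecting across $\partial\mathbb R^{n+1}_+$, and that produces a smooth minimal hypersurface only when $\theta=\frac{\pi}{2}$.

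For $n\ge3$ the Neumann condition is also the wrong one. If $\partial_\mu u=0$, then $\frac12\partial_\mu|\nabla u|^2=-\cot\theta\,h(\nabla u,\nabla u)$, which matches the Robin term $q|\nabla u|^2$ only when $n=2$. What cancels the capillary term is the \emph{relative} condition. If $d\omega=\delta\omega=0$ and $\omega^\sharp=f\mu$ on $\partial M$, then $\operatorname{div}_{\partial M}\mu=-q$ gives $\frac12\partial_\mu|\omega|^2=q|\omega|^2$.

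The paper builds on this identity:
\begin{itemize}
\item it takes relative $L^2$ harmonic one-forms, a space of dimension $b_{n-1}(M)$ that contains $dx_{n+1}$;
\item it adds $n$ forms $\omega_v$ asymptotic to $b_\alpha v^\top$ on the ends;
\item it applies the Chodosh--Gianocca admissible pairings $u_a=\langle\nu\wedge\omega^\sharp,\Theta_a\rangle$;
\item it subtracts multiples of the dilation field $Z=X\cdot\nu$, which satisfies $Q(Z,Z)=(n-1)\int_M|dx_{n+1}|^2>0$, to absorb the non-decaying part.
\end{itemize}
This yields $b_{n-1}(M)\le1$, and then $b_{n-1}(M)\ge B+N-1$ forces $N=B=1$. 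Without a substitute for this argument, ``one catenoidal end'' in your Step~3 is unjustified. Note also that with compact boundary every end is an interior end, horizontal because $M\subset\{x_{n+1}\ge0\}$; it does not meet the support.

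\emph{Noncompact boundary.} Here your rigidity step fails for $n\ge3$. The hairpin is not translation-invariant, and its boundary is $\mathbb R\times S^{n-2}$, not a line. Moving planes only gives $O(n-1)\times\mathbb Z_2$ symmetry, first in $x_n$ and then in the $x'$-directions, once embeddedness and \eqref{mono} have shown there are exactly two boundary ends. The reduced problem is still a free-boundary PDE in two variables, with no ODE and no explicit model.

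The paper instead proves uniqueness by continuation in the angle:
\begin{itemize}
\item at $\theta=\frac{\pi}{2}$, even reflection and Chodosh--Gianocca give $C^+$; the Dirichlet index vanishes because $\nu_{n+1}$ is a Jacobi field with zero trace;
\item nondegeneracy (Proposition~\ref{kernel}: the finite-energy Robin Jacobi kernel is spanned by $\nu_1,\ldots,\nu_{n-1}$) gives an implicit-function chart in weighted H\"older spaces;
\item uniform estimates give properness.
\end{itemize}
So the projection to $\kappa$ is a covering of the connected interval allowed by \eqref{stable-cone-range}, with fiber $\{C^+\}$ over $\kappa=0$. This is the second place the angle range matters; you use it only in Step~1.

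Finally, in Step~1, $|h|\le C/|x|$ does not by itself make the ends graphical. You need $r|h|\to0$ from flat blow-downs, and then a harmonic-approximation improvement to $|h|=O(r^{-1-\alpha})$ with $\alpha>1$. Only then are the ends single-valued graphs over fixed (half-)hyperplanes. Multiplicity one holds only per end: for $C^+$, both ends blow down to the same half-hyperplane.
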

\end{samepage}

For the history of the index-one classification of minimal hypersurfaces
without boundary,
we refer to Chodosh--Gianocca~\cite{CG26} and the references therein.
Tam--Zhou~\cite[Theorem 2.1]{TZ09} proved that the \(n\)-dimensional
catenoid in \(\mathbb R^{n+1}\) has Morse index one for every \(n\ge3\).
For \(n\ge3\) and \(\theta\ne\frac{\pi}{2}\), the capillary hairpin models
have the graph description
\eqref{hairpin-graph}--\eqref{hairpin-graph-equation} in
Appendix~\ref{sec:hairpin-construction}.
In dimension two, separation of variables  gives
\(F''=F\), \(F(0)=1\), and \(F'(0)=0\).
The solution \(F(s)=\cosh s\) yields Chodosh's
parametrization~\cite{ChodoshWeb}:
\begin{equation}\label{hairpin-param}
 \begin{split}
 X_c(s,t)&=\bigl(s+c\sinh s\cos t,
       ct+\cosh s\sin t,\sqrt{1-c^2}\cosh s\cos t\bigr),\\
 &\hspace{12mm}s\in\mathbb R,\quad |t|\le\frac{\pi}{2},\quad 0\le c<1.
 \end{split}
\end{equation}
Here \(c=|\cos\theta|\).
A direct calculation gives
$\Ind_\theta(X_c)=1.$
Appendix~\ref{sec:index-calculations} gives the construction and Morse
index calculation for the higher-dimensional capillary hairpins,
for every \(n\ge3\) and every contact angle.

\begin{figure}[H]
\centering

\caption{(a) Catenoid truncation. (b) Capillary hairpin hypersurface.}
\label{fig:index-one}
\end{figure}

A main ingredient in the proof of Theorem~\ref{index-one} is the following
finite-index structure theorem. It extends regularity at infinity for
minimal hypersurfaces without boundary~\cite[Appendix A]{CG26},
\cite{Schoen83} to capillary hypersurfaces.

\begin{theo}\label{main}
Let \(M^n\subset\mathbb R^{n+1}_+\) be a connected, complete, two-sided,
properly embedded capillary minimal hypersurface with nonempty boundary,
contact angle
\(\theta\), extrinsic Euclidean volume growth, and \(\Ind_\theta(M)<\infty\).
Suppose that
\((n,\theta)\) satisfies \eqref{stable-cone-range}.
Then \(M\) has finite total curvature and finitely many ends:
\[
 \int_M|h|^n<\infty,\qquad
 M\setminus K=E_1\sqcup\cdots\sqcup E_N
\]
for a compact set \(K\subset M\). Each end is a single-valued graph with a
unique tangent cone of multiplicity one.

In suitable coordinates, an interior end is a graph over a horizontal
affine hyperplane and satisfies
\begin{equation}\label{interior-exp}
 x_{n+1}=b+a|x'|^{2-n}+\frac{d\cdot x'}{|x'|^n}
                 +O_k(|x'|^{-n}),\qquad x'=(x_1,\ldots,x_n),\quad d\in\mathbb R^n.
\end{equation}
A boundary end is a capillary graph satisfying
\begin{equation}\label{boundary-exp}
 \begin{gathered}
 x_n=-\cot\theta\,x_{n+1}+b+a\rho^{2-n}
     +\frac{d\cdot x'}{\rho^n}
     +
 \begin{cases}
  O_k(\rho^{-3}\log\rho), & n=3,\\
  O_k(\rho^{-n}), & n\ge4,
 \end{cases}\\
 \rho^2=|x'|^2+\frac{x_{n+1}^2}{\sin^2\theta},
 \qquad x'=(x_1,\ldots,x_{n-1}),\quad d\in\mathbb R^{n-1}.
 \end{gathered}
\end{equation}
Moreover, \(|\nabla^jh|=O(r^{-n-j})\), where \(r=|X|\).
Here \(f=O_k(r^\beta)\) means \(|D^jf|\le C_jr^{\beta-j}\) for
\(0\le j\le k\).
\end{theo}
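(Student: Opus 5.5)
The strategy is to first get stability outside a compact set and then do a blow-down analysis. Since \(\Ind_\theta(M)<\infty\), a standard argument (Fischer-Colbrie type) gives a compact \(K_0\subset M\) such that \(M\setminus K_0\) is capillary-stable. The argument runs as follows. Take a maximal family of functions with disjoint supports on which \(Q\) is negative. Any compactly supported function in the complement of their supports then has \(Q\ge0\); otherwise we could enlarge the family and exceed the index.

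\textbf{Step 1 (curvature decay).} On the stable part we want the estimate \(|h|(x)\le C\,\mathrm{dist}(x,K_0)^{-1}\). Argue by contradiction with a point-picking rescaling. Rescaled sequences have extrinsic Euclidean volume bounds, so they subconverge. The limit is a stable capillary minimal hypersurface in \(\mathbb R^{n+1}_+\), or a stable minimal hypersurface in \(\mathbb R^{n+1}\) if the points escape the boundary, and it has Euclidean volume growth. Under \eqref{stable-cone-range}, the Wang--Zhang results for capillary cones, together with the known interior results for \(n\le 6\) (Chodosh--Li, Bellettini, etc.), force the limit to be flat. Curvature estimates then contradict the normalization \(|h|=1\) at the picked point. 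This is the step where the dimension/angle hypothesis enters.

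\textbf{Step 2 (tangent cones and ends).} Blow down: \(M_R=R^{-1}M\). The volume bound and monotonicity (interior and the capillary monotonicity formula) give varifold subsequential limits that are stationary capillary cones. Outside the origin the convergence is smooth by Step 1. Each cone is stable away from the vertex with bounded link curvature, so by the Wang--Zhang classification it is a union of hyperplanes and half-hyperplanes meeting \(\partial\mathbb R^{n+1}_+\) at angle \(\theta\). Embeddedness and smooth convergence force these pieces to be disjoint away from the origin. Hence the multiplicities count graphical sheets, and each sheet is a single-valued graph over the annuli \(\{R<|x|<2R\}\) of its plane. Uniqueness of the tangent cone follows because \(|h|=o(r^{-1})\): integrating the tilt across dyadic annuli gives a Cauchy sequence once the decay is \(O(r^{-1-\epsilon})\), which we obtain in Step 3 by bootstrapping. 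Alternatively, use Allard--Almgren/Simon uniqueness for integrable cones, since the planes are integrable. Finite volume growth bounds the number of sheets, so there are finitely many ends. Each sheet is an interior end, a graph over a horizontal hyperplane, or a boundary end, a graph over a tilted half-hyperplane \(x_n=-\cot\theta\,x_{n+1}\).

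\textbf{Step 3 (expansions).} On an interior end write \(x_{n+1}=u(x')\) with \(|Du|\to0\) and \(|D^2u|=o(|x'|^{-1})\). The minimal surface equation is then \(\Delta u=\mathrm{div}(\text{quadratic in }Du)\). A standard Schoen-type iteration gives \(Du=O(|x'|^{1-n})\) and the expansion \eqref{interior-exp}, using harmonic expansion at infinity plus Schauder estimates. For boundary ends, first apply the linear change of variables that straightens the cone half-hyperplane; this is the source of \(\rho\). The graph function then satisfies a quasilinear equation on a half-space exterior domain with an oblique (capillary) boundary condition, whose linearization is the Neumann problem after the shear. Expanding against Neumann-harmonic functions on the half space (even reflection) yields \(b+a\rho^{2-n}+d\cdot x'\rho^{-n}\). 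The quadratic error terms are estimated in weighted Hölder spaces. In \(n=3\), a resonance between the error (of order \(\rho^{-4}\)) and the homogeneous degree \(-3\) produces the \(\log\). Finally \(|\nabla^jh|=O(r^{-n-j})\) follows by differentiating the expansion, and \(\int|h|^n<\infty\) follows since \(|h|^n=O(r^{-n^2})\) is integrable against \(r^{n-1}dr\).

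\textbf{Main obstacle.} I expect the hardest step to be Step 1 together with the boundary-end analysis of Step 3. Near \(\partial M\) one needs a capillary curvature estimate and the Wang--Zhang nonexistence of nonflat stable capillary cones. I would first try to reduce to their theorem via the rescaling argument, checking that limits preserve both the contact angle and stability.
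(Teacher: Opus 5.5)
\textbf{The gap: uniqueness of the tangent cone for boundary ends is circular.} In Step 2 you postpone uniqueness until you have \(|h|=O(r^{-1-\epsilon})\), which you say will be ``obtained in Step 3 by bootstrapping.'' But Step 3, for a boundary end, begins by shearing a fixed half-hyperplane \(\{x\cdot e=-\cot\theta\,x_{n+1}\}\) to a half-space and writing the whole end as a graph over it. That global graph is exactly what uniqueness would give you.

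With only \(r|h|\to0\), the horizontal direction \(e(R)\) of the blow-down at scale \(R\) can move by \(o(1)\) per dyadic annulus. Its total rotation can still be unbounded, for instance like \(\log\log R\). So there need not be any single half-hyperplane over which the end is a graph, and neither your Schoen-type iteration nor the Neumann-harmonic expansion can start. Interior ends do not have this problem, since \(\{x_{n+1}=0\}\) is the only hyperplane through the origin contained in \(\mathbb R^{n+1}_+\); your graph over the horizontal plane is legitimate there. The fallback to Allard--Almgren/Simon would need a capillary version at infinity, which you neither cite nor prove. Carrying it out is essentially the missing argument.

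\textbf{The missing idea: a scale-by-scale decay improvement that uses no fixed reference plane.} The paper proceeds as follows.
\begin{enumerate}
\item For each large \(R\), it bounds the oscillation of \(\nu\) on \(E\cap\{2R\le r\le BR\}\) by \(C_B\sup_{E\cap\{r\ge R\}}r|h|\).
\item It writes that piece as a single-valued graph over \(\nu_R^\perp\), where \(\nu_R\) is the normal at a point of that scale.
\item It argues by contradiction. Set \(\epsilon_j=\sup_{E\cap\{r\ge S_j\}}r|h|\) and \(w_j=(u_j-b_j)/\epsilon_j\), normalized so that \(w_j(x_j)=0\) and \(Dw_j(x_j)=0\). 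These converge to a harmonic \(w\) on \(\{|x|>2\}\) with \(|D^2w|\le C/|x|\). For boundary ends the domain is the half \(\{x_1\ge0\}\), with \(\partial_1w=0\) on \(\{x_1=0\}\).
\item The exterior expansion \(w=a_0+\ell\cdot x+O_2(r^{2-n})\) gives \(|D^2w|=O(r^{-n})\). The linear mode, which is the infinitesimal rotation of the half-hyperplane, has no Hessian.
\item This yields \(\sup_{E\cap\{r\ge LR\}}r|h|\le L^{-\alpha}\sup_{E\cap\{r\ge R\}}r|h|\) for \(0<\alpha<n-1\). Hence \(|h|\le C_\alpha r^{-1-\alpha}\) with \(\alpha>1\), which is possible because \(n\ge3\).
\end{enumerate}
Only after this do you get finite total curvature, a limiting normal, and a global graph over a fixed half-hyperplane. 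From that point your Step 3 matches the paper's Newtonian-potential argument.

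\textbf{A smaller error.} Your claim that the limiting pieces are disjoint away from the origin is false. The hairpin blows down to \(x_n=\pm|\cot\theta|\,x_{n+1}\), which share the edge \(\{x_n=x_{n+1}=0\}\). The half-catenoid blows down to a half-hyperplane of multiplicity two. The single-sheet decomposition has to come instead from smooth convergence of each sheet up to the boundary over simply connected (half-)annuli.
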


\medskip
\noindent\textbf{Strategy and proof outline.}
For Theorem~\ref{index-one-surface}, finite index gives finite total
curvature and finite conformal type~\cite[Theorem 1.4]{HS23}.
Schwarz reflection of the Gauss map and positivity of the height give
regular boundary ends. The harmonic one-form method of
Ros~\cite{Ros06} and Chodosh--M\'aximo~\cite{CM18}, adapted to the
capillary boundary condition, gives the topological bound in
Lemma~\ref{surface-index-bound}.
For compact boundary, the L\'opez--Ros deformation~\cite{LR91,Choe93}
and reflection reduce the classification to Schoen's
theorem~\cite[Theorem 3]{Schoen83}.
For noncompact boundary, the index bound and monotonicity determine
the topology; the height function and Gauss map then determine the
Weierstrass data and give \eqref{hairpin-param}.

For Theorem~\ref{index-one}, we use the end expansions in
Theorem~\ref{main}.
For compact boundary, we combine Chodosh--Gianocca's harmonic
one-form method~\cite[Sections 5--8]{CG26}(based on
\cite{CM16,CM18,Li17,Sav10}) ,with the capillary boundary identities
to obtain one end and one boundary component.
Moving planes then gives rotational symmetry and the catenoid.
For noncompact boundary, embeddedness and monotonicity give two
boundary ends, and moving planes gives the symmetries in
Proposition~\ref{two-ends}.
After fixing the center and scale, we invert the linearized problem
and apply the implicit function theorem to vary the contact angle.
Appendix~\ref{sec:hairpin-construction} uses this construction and
uniform estimates to obtain hairpins of index one for every contact
angle. Under \eqref{stable-cone-range}, compactness and uniqueness of
the half-catenoid at \(\theta=\frac{\pi}{2}\) give uniqueness for
each allowed contact angle.

 For Theorem~\ref{main}, we use stability outside a compact set
and a standard blow-down argument.
Wang--Zhang's curvature estimates and stable-cone classification
\cite[Corollary 1.10 and Theorem 1.16]{WZ26}
then give \(r|h|\to0\).
Harmonic approximation gives \(|h|=O(r^{-1-\alpha})\)
for some \(\alpha>1\), hence finite total curvature.
Integrating this curvature bound gives a limiting unit normal
on each end. The single-valued graph representation then gives
a unique tangent cone at infinity with multiplicity one.
This extends the asymptotic analysis in
\cite[Section 2]{Schoen83} and \cite[Theorem A.4]{CG26}
to the capillary setting.
The minimal graph equation and the contact-angle boundary condition
give the end expansions and their derivative estimates.

\medskip
\noindent\textbf{Organization of this paper.}
In Section~\ref{sec:preliminaries}, we collect the geometric conventions,
boundary identities, and Sobolev inequalities for capillary hypersurfaces.
Section~\ref{sec:classification-two} proves
Theorem~\ref{index-one-surface}.
Section~\ref{sec:structure} establishes the finite total curvature and
end expansions in Theorem~\ref{main}.
Section~\ref{sec:classification-three} proves Theorem~\ref{index-one}.
Appendix~\ref{sec:hairpin-construction} gives the higher-dimensional
hairpin construction and Morse index calculation.
Appendix~\ref{sec:surface-ends-proof} proves regularity of the ends in
dimension two.

\medskip
\noindent\textbf{Acknowledgments.}
The author is grateful to Professors~Guofang Wang and~Chao Xia
for their constant support and interest in this work.

\medskip
\noindent\textbf{Disclosure on AI assistance.}
The main ideas of this paper are due to the author.
The author initially computed the Morse index of the two-dimensional
capillary hairpin surface with assistance from ChatGPT 5.6.
This calculation motivated the author to study the classification problem.
In the construction of higher-dimensional capillary hairpin
hypersurfaces, ChatGPT 6.0 also assisted with the proof of compactness
in weighted function spaces (Proposition~\ref{properness}).
Figure~\ref{fig:index-one} was drawn by ChatGPT 6.0, which also assisted
with other calculations and PDE estimates.
The author takes full responsibility for the content of this paper.

\section{Preliminaries}\label{sec:preliminaries}

Let \(M^n\), \(n\ge2\), be a smooth manifold with nonempty boundary, and let
\[
 X:(M,\partial M)\longrightarrow(\mathbb R^{n+1}_+,\partial\mathbb R^{n+1}_+)
\]
be a two-sided proper embedding. We identify \(M\) with its image and
assume that it is capillary minimal with contact angle \(\theta\in(0,\pi)\).

Let \(\nu\) be a global unit normal and \(\mu\) the outward unit conormal along
\(\partial M\). We denote the standard coordinate vectors by
\(E_1,\ldots,E_{n+1}\). Let \(\bar\nu\) be the unit normal to \(\partial M\)
within the hyperplane \(\{x_{n+1}=0\}\).
Choose its orientation so that
\begin{equation*}
 \mu=-\sin\theta\,E_{n+1}+\cos\theta\,\bar\nu,\qquad
 \nu=\cos\theta\,E_{n+1}+\sin\theta\,\bar\nu .
\end{equation*}
Let \(\bar\nabla,\nabla\) be the Euclidean and induced connections.
The second fundamental form is
\[
 h(U,V)=\langle\bar\nabla_U\nu,V\rangle.
\]
We write \(\nu_i=\langle\nu,E_i\rangle\).

For \(f\in C_c^\infty(M)\) and \(g\in C^\infty(M)\),
integration by parts gives
\[
 Q(f,g)=-\int_M f(\Delta g+|h|^2g)\,dA
       +\int_{\partial M}f(\partial_\mu g-qg)\,ds.
\]
Along \(\partial M\), the angle condition and minimality give
\begin{equation}\label{boundary}
 h(\mu,T)=0\quad(T\in T\partial M),\qquad
 \operatorname{div}_{\partial M}\mu=-q.
\end{equation}
For \(0\le w\in C_c^\infty(M)\), the trace estimate
\cite[Lemma 2.2]{WZ26} gives
\begin{equation}\label{height-trace}
 \sin\theta\int_{\partial M}w\le\int_M|dw|.
\end{equation}
For \(w,f\in C_c^\infty(M)\), the capillary Sobolev inequality
\cite[Lemma 2.3]{WZ26} and \eqref{height-trace}, together with
H\"older's inequality, give
\begin{equation}\label{Sob}
 \begin{aligned}
 \|w\|_{L^{\frac{n}{n-1}}(M)}
 &\le C_{n,\theta}\|dw\|_{L^1(M)},\\
 \|f\|_{L^{\frac{2n}{n-2}}(M)}
 +\|f\|_{L^{\frac{2(n-1)}{n-2}}(\partial M)}
 &\le C_{n,\theta}\|df\|_{L^2(M)}.
 \end{aligned}
\end{equation}
The second inequality requires \(n\ge3\).
Equation~\eqref{Sob} also gives
\[
 \Vol(B_M(p,R))\ge a_{n,\theta}R^n.
\]

Let \(\Omega\) be the component of \(\mathbb R^{n+1}_+\setminus M\)
for which \(\nu\) is the outward unit normal along \(M\), and let
\(D=\overline\Omega\cap\partial\mathbb R^{n+1}_+\).
The outward normal of \(\partial D\) within the hyperplane
\(\{x_{n+1}=0\}\) is \(\bar\nu\).
The capillary monotonicity formula \cite[Lemma 4.1]{WZ26} gives, for
\(p\in\partial\mathbb R^{n+1}_+\) and \(0<\rho<R\),
\begin{equation}\label{mono}
 \begin{split}
 &\frac{\Vol(M\cap B_R(p))-\cos\theta\,\Vol(D\cap B_R(p))}{R^n}\\
 &\quad-\frac{\Vol(M\cap B_\rho(p))-\cos\theta\,\Vol(D\cap B_\rho(p))}{\rho^n}
 =\int_{M\cap(B_R(p)\setminus B_\rho(p))}
       \frac{|(X-p)^\perp|^2}{|X-p|^{n+2}}\,dA.
 \end{split}
\end{equation}

\section{The two-dimensional classification}\label{sec:classification-two}

\begin{lemm}\label{surface-ends}
Let \(M^2\subset\mathbb R^3_+\) be a connected, complete, two-sided,
properly embedded capillary minimal surface with nonempty boundary and
finite index. Then \(M\) has finite total curvature and is conformally equivalent to
\[
 \overline M\setminus\{p_1,\ldots,p_N\},
\]
where \(\overline M\) is a compact Riemann surface with boundary
and \(p_1,\ldots,p_N\) are distinct points of \(\overline M\). Every end is a
single regular graph with a unique limiting plane or capillary half-plane.
If \(e_i\) and \(e_b\) denote the numbers of interior and boundary ends,
respectively, then
\[
 \lim_{R\to\infty}\frac{\operatorname{Area}(M\cap B_R)}{\pi R^2}
       =e_i+\frac{e_b}{2}.
\]
\end{lemm}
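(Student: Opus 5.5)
We take finite total curvature and finite conformal type from \cite[Theorem 1.4]{HS23}: finite capillary index gives \(\int_M|h|^2<\infty\), and \(M\) is conformally a compact Riemann surface with boundary, \(\overline M\), with finitely many points \(p_1,\dots,p_N\) removed. The proof then analyses each puncture separately, according to whether it lies in the interior of \(\overline M\) or on \(\partial\overline M\).

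\emph{Interior punctures.} Near such a \(p_i\), the Gauss map \(G\) has finite total curvature on a punctured disk. By Osserman's argument it extends meromorphically across \(p_i\), and the Weierstrass data have at most poles there. Properness and embeddedness force the standard conclusion: the end is a single graph over its limiting plane, and its growth is logarithmic, as in Schoen's analysis \cite{Schoen83}. Since \(M\subset\mathbb R^3_+\) and \(x_3\ge0\), the limiting normal must be \(\pm E_3\). Otherwise the end would contain points with \(x_3\to-\infty\), because a graph over a tilted plane is unbounded below. Hence each interior end is a horizontal graph with logarithmic growth.

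\emph{Boundary punctures.} Along \(\partial M\) the contact angle is constant, so \(\langle\nu,E_3\rangle=\cos\theta\). Under stereographic projection from \(E_3\), the Gauss map therefore sends \(\partial M\) into a fixed circle on the sphere. Take a half-disk conformal chart \(\{|z|<\varepsilon,\ \operatorname{Im}z\ge0\}\setminus\{0\}\) at the puncture, with the real axis mapping to \(\partial M\). Schwarz reflection of \(G\) across the real axis, composed with inversion in that circle, gives a meromorphic map on a punctured disk with finite spherical area. By the big Picard / Osserman argument it extends meromorphically across \(0\).

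The height \(x_3\) is harmonic, vanishes on the real axis and is positive inside. It therefore extends by odd reflection to a harmonic function on the punctured disk, and positivity forces it to have the form \(x_3=\operatorname{Im}(a/z)+\text{bounded}\) with \(a\ge0\), or to be bounded. Combining this with the extended Gauss map, we write \(\partial_z X\) as a meromorphic expression with a pole of controlled order at \(0\). A computation with the Weierstrass data then shows that the end is embedded and proper only when the pole order is exactly \(2\), with no log period. In that case the end is a single graph over a capillary half-plane. That half-plane makes angle \(\theta\) with \(\{x_3=0\}\) and has limiting normal \(\cos\theta E_3+\sin\theta\bar\nu_\infty\).

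\emph{Area.} Each regular interior end is asymptotic to a plane, and its area in \(B_R\) is \(\pi R^2+o(R^2)\). Each boundary end is asymptotic to a half-plane through the origin up to a bounded translation, and contributes \(\tfrac{\pi}{2}R^2+o(R^2)\). The compact part \(K\) contributes \(O(1)\). Summing these contributions gives the density \(e_i+e_b/2\). The existence of the limit is also consistent with the monotonicity formula \eqref{mono}.

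The main obstacle is the boundary-end analysis. The reflection has to be set up correctly, since it mixes the Schwarz reflection for \(G\) with the odd reflection for \(x_3\) and the Neumann-type data coming from \eqref{boundary}. After that, one must rule out boundary ends of higher multiplicity or with spiralling by using embeddedness. The plan for this is to reflect the whole end, via the reflected Weierstrass representation, into a complete finite total curvature end in \(\mathbb R^3\) that is possibly immersed. We then apply the Jorge–Meeks winding number count, and show that multiplicity greater than one would produce self-intersections near \(\partial M\) or a contradiction with \eqref{mono}.
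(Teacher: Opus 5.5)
Your treatment of interior ends (Schoen plus embeddedness, then the half-space argument for horizontality) and your area count agree with the paper. You also have the right ingredients at a boundary puncture: the reflection of $G$ in the circle and the decomposition of $x_3$. The gap is at the boundary ends, which are the heart of the lemma. There you leave the pole order undetermined and plan to exclude higher multiplicity by reflecting the Weierstrass data to a complete end in $\mathbb R^3$ and applying the Jorge--Meeks count. That plan fails as stated for $\theta\ne\frac{\pi}{2}$, because the analytically continued data need not define a single-valued end. If $\Phi=(Az^{-2}+Bz^{-1}+O(1))\,dz$, the real period around the full puncture is $-2\pi\operatorname{Im}B$. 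For the hairpin \eqref{hairpin-param} with $c\ne0$, in the coordinate $w=e^{-(s+it)}$, one computes $B=(-1,ic,0)$, so this period is $(0,-2\pi c,0)\ne0$. For the same reason, ``no log period'' is not a consequence of embeddedness. The term $\operatorname{Re}(B\log z)$ occurs in embedded examples and is harmless only because $\log z$ is single-valued on the half-disk.

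The missing observation is that the pole order at a boundary puncture is forced, with no use of embeddedness. Since $G$ maps the real axis into the circle $|G|=k$ and extends meromorphically, continuity gives $|G(0)|=k$. Hence $G^{\pm1}$ are bounded near $0$, and $\Phi$ has exactly the pole order of $\omega=2\partial x_3$. The half-disk version of B\^ocher's theorem gives $x_3=a\operatorname{Im}z/|z|^2+H_0$ with $a\ge0$ and $H_0$ extending as an odd harmonic function. (Note that $\operatorname{Im}(a/z)<0$ in the upper half-plane, so your sign is off.) If $a=0$, then $\Phi$ is holomorphic at $0$ and the metric is bounded there, contradicting completeness; you list this case but never exclude it. Thus $a>0$, $\Phi$ has exactly a double pole, and $X=\operatorname{Re}(A/z+B\log z)+C+O(|z|)$ with $A\cdot A=0$, $A\ne0$. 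In particular, higher multiplicity and spiralling cannot occur. Single-valued graphicality over the capillary half-plane must then still be proved, not asserted. The paper projects onto $P=\operatorname{span}\{\operatorname{Re}A,\operatorname{Im}A\}$ along the horizontal part $e$ of $\nu_\infty$. This projection preserves $x_3$, so it maps $\partial M$ into the line $P\cap\{x_3=0\}$. It then shows that $F(w)=\Pi_PX(1/w)=w+O(\log|w|)$ with $DF=I+O(|w|^{-1})$. A path-length estimate on an exterior half-disk gives injectivity of $F$. Properness and degree one show that its image contains the complement of a compact subset of $P\cap\{x_3\ge0\}$.
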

The proof of Lemma~\ref{surface-ends} is given in Appendix~\ref{sec:surface-ends-proof}.

For \(M\) as in Lemma~\ref{surface-ends}, use the Weierstrass data
\((G,\omega)\) of Appendix~\ref{sec:surface-ends-proof}, with
\(dX=\operatorname{Re}\Phi\) and \(|G|=k>0\) on \(\partial M\).
Let \(g,b\) be the genus and number of boundary circles of the conformal
compactification.

\begin{lemm}\label{surface-index-bound}
If \(M\) is nonplanar, then
\[
 3\Ind_\theta(M)\ge2g+b+4e_i+2e_b-3.
\]
\end{lemm}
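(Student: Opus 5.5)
The plan is to adapt the harmonic one-form argument of Ros~\cite{Ros06}, in the form used by Chodosh--M\'aximo~\cite{CM18}, to the capillary setting. We produce a real vector space \(\mathcal H\) of harmonic one-forms on \(M\), of dimension \(2g+b+4e_i+2e_b-3\). Each \(\omega\in\mathcal H\) is mapped to a triple of test functions \(\Phi_\omega=(u_1,u_2,u_3)\). The two properties we need are
\[
 Q(u_1)+Q(u_2)+Q(u_3)\le0 \quad\text{for every }\omega\in\mathcal H,
\]
and that equality forces \(M\) to be planar. Granting these, suppose \(\Ind_\theta(M)=I\) and let \(f_1,\dots,f_I\) span a maximal negative space. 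The conditions \(\int_M u_jf_k\,dA=0\) for \(j\le3\), \(k\le I\) are \(3I\) linear conditions on \(\omega\). If \(\dim\mathcal H>3I\), some nonzero \(\omega\) satisfies all of them. Then \(Q(u_j)\ge0\) for each \(j\), so the sum vanishes and \(M\) is planar, contradicting the hypothesis. Hence \(\dim\mathcal H\le 3I\), which is the claimed inequality.

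\emph{Step 1: constructing \(\mathcal H\).} Via Lemma~\ref{surface-ends}, let \(\widehat M\) be the Schottky double of \(\overline M\). It has genus \(\hat g=2g+b-1\). Each interior end gives two punctures of \(\widehat M\) and each boundary end gives one, for \(2e_i+e_b\) punctures in total. We take meromorphic differentials on \(\widehat M\) with at most simple poles at the punctures that are compatible with the antiholomorphic involution, in the sense that the boundary condition along \(\partial M\) is the one making the boundary integrals below manageable. Riemann--Roch, combined with the residue theorem and the reality constraint, should give real dimension \(\hat g+2(2e_i+e_b)-2=2g+b+4e_i+2e_b-3\). I would choose between absolute and relative boundary conditions by whichever makes the boundary term in Step 2 vanish, and then recheck the count for that choice.

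\emph{Step 2: the test functions and the quadratic form.} As in Ros, let \(\omega^\sharp\) be the vector field dual to \(\omega\), and set \(\Phi_\omega=\omega^\sharp\times\nu\) (or the analogous expression built from \(\omega\) and \(\star\omega\)), read componentwise in \(\mathbb R^3\). Since \(\omega\) is harmonic, each component satisfies \(\Delta u+|h|^2u=\) (a term whose sum over components integrates to a nonpositive contribution); this is Ros's pointwise identity. The quantities \(|\omega|\) and \(\Phi_\omega\) decay like \(r^{-1}\) at the regular ends of Lemma~\ref{surface-ends}. A logarithmic cutoff therefore turns \(\Phi_\omega\) into admissible compactly supported functions with error tending to zero, since the cutoff is applied in dimension two.

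\emph{Step 3: the boundary term, which I expect to be the main obstacle.} After integrating by parts, the new contribution is \(\sum_j\int_{\partial M}u_j(\partial_\mu u_j-qu_j)\,ds\). To control it I would use \eqref{boundary}, namely \(h(\mu,T)=0\) and \(\operatorname{div}_{\partial M}\mu=-q\). I would also use that \(\partial M\) is a line of curvature, so that \(|G|=k\) on \(\partial M\) and \(G\) reflects across this circle. With the boundary condition on \(\omega\) from Step 1, the goal is to show that the boundary term is an exact derivative along \(\partial M\), so that it integrates to zero, or else that it has a sign. If the first choice of boundary condition fails, I would compose with the Möbius map sending \(\{|G|=k\}\) to the equator, so that the reflection becomes the standard free-boundary one, and redo the computation there.

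\emph{Step 4: the equality case.} In the equality case each \(u_j\) is a Jacobi field satisfying the capillary Robin condition. Ros's argument then shows that \(\omega\) is parallel to a fixed direction, which forces the Gauss map to be constant, so \(M\) is planar.
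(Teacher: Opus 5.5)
Your framework is the one the paper uses: the Schottky double, Ros-type test functions (yours, $\omega^\sharp\times\nu$, agree with the paper's $\alpha^\sharp$ up to $\star$), and counting $3\Ind_\theta(M)$ linear conditions. However, two load-bearing steps fail.

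\emph{The count in Step~1.} On the double $\widehat M$ of genus $\hat g=2g+b-1$ with $P=2e_i+e_b$ punctures, Riemann--Roch and the residue theorem give complex dimension $\hat g+P-1$ for differentials with at most simple poles. The real, involution-compatible subspace then has dimension $2g+b+2e_i+e_b-2$. This falls short of what you need by $P-1$; for the hairpin configuration ($g=0$, $b=1$, $e_i=0$, $e_b=2$) you get $1$, not $2$. The missing idea is the Chodosh--M\'aximo improvement: allow poles of order two. The real dimension is then $\hat g+2P-1=2g+b+4e_i+2e_b-2$. The test functions remain admissible because the metric near an end is comparable to $|z|^{-4}|dz|^2$. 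Writing $\alpha=\operatorname{Re}(a\,dz)$ and $dX=\operatorname{Re}(\phi\,dz)$, one has $\xi=\alpha^\sharp=2\operatorname{Re}\bigl((z^2a)\overline{(z^2\phi)}\bigr)/|z^2\phi|^2$. This is bounded and extends smoothly across each puncture, though it no longer decays like $r^{-1}$. The logarithmic cutoff only needs $|\xi|$ bounded and $\int_M|\nabla\eta_R|^2\to0$.

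\emph{Step~4 is false: $\sum_jQ(u_j)=0$ does not force planarity.} Ros's lemma only gives $\alpha=\star d\langle a,X\rangle$ for a constant $a\in\mathbb R^3$, and for $a=E_3$ this form always lies in the space. It is the real part of $\pm i\,2\partial x_3$, which is real on $\partial M$ because $x_3=0$ there. It has double poles at boundary ends and simple poles at interior ends. Its components are $\pm(\nu_2,-\nu_1,0)$, the horizontal translation fields, which satisfy the Jacobi equation and the Robin condition on every capillary surface. So no choice of boundary condition makes the kernel trivial.

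What the boundary condition does give is $\mathcal H_*\subset\operatorname{span}\{\star dx_3\}$. From $\alpha(\mu)=0$ one gets $\langle a,T\rangle=0$ along $\partial M$. If $(a_1,a_2)\ne0$, every boundary component is then a horizontal straight line, along which the contact-angle condition gives $\bar\nabla_T\nu=0$. Hence $G$ is constant there, and Schwarz reflection with $|G|=k$ makes $G$ constant on $M$. The inequality is therefore $3\Ind_\theta(M)\ge\dim\mathcal H-\dim\mathcal H_*\ge(2g+b+4e_i+2e_b-2)-1$. It does not come from a trivial kernel on a space of dimension $2g+b+4e_i+2e_b-3$.

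Two smaller points. Step~3 is not an obstacle: closedness, co-closedness, $\alpha(\mu)=0$ and $\operatorname{div}_{\partial M}\mu=-q$ give the pointwise identity $\langle\nabla_\mu\xi,\xi\rangle=q|\xi|^2$. The relative condition gives the same identity in dimension two. This cancels the Robin term exactly. Also, $L^2(M,dA)$-orthogonality to an arbitrary maximal negative space does not imply $Q(u_j)\ge0$. Use $Q$-orthogonality instead, or, as the paper does, pass to the compactified form on $H^1(\overline M)$ and impose orthogonality to its negative eigenfunctions.
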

\begin{proof}
Glue \(\overline M\) to its complex conjugate along the boundary.
Consider meromorphic one-forms \(\omega\) on this closed surface,
with poles of order at most two at the added points and
\(\omega(T)\in\mathbb R\) for every boundary tangent vector \(T\).
Let \(\mathcal H\) be the real vector space of one-forms
\(\alpha=\operatorname{Re}(\omega|_M)\).
The real dimension of \(\mathcal H\) equals the complex dimension
of the meromorphic one-forms without the restriction
\(\omega(T)\in\mathbb R\).
Riemann--Roch therefore gives
\[
 \dim_{\mathbb R}\mathcal H=2g+b+4e_i+2e_b-2,\qquad
 d\alpha=\delta\alpha=0,\qquad
 \alpha(\mu)=0\quad\text{on }\partial M.
\]

For \(\xi=\alpha^\sharp\), write \(\xi_j=\langle\xi,E_j\rangle\).
These functions extend smoothly to every added point. In fact, if
\(\Phi=\phi(z)\,dz\) and \(\alpha=\operatorname{Re}(a(z)\,dz)\), then
\[
 \xi=\frac{2\operatorname{Re}(a\bar\phi)}{|\phi|^2}.
\]
Both \(z^2a\) and \(z^2\phi\) are holomorphic at \(z=0\), with
\((z^2\phi)(0)\ne0\) by Lemma~\ref{surface-ends}.

Let \(T\) be a unit boundary tangent. Since
\(\langle\nabla_T\mu,T\rangle=-q\), the Bochner and boundary identities are
\[
 \frac12\Delta|\xi|^2=|\nabla\xi|^2+K|\xi|^2,
 \qquad \langle\nabla_\mu\xi,\xi\rangle=q|\xi|^2.
\]
Since \(|h(\xi,\cdot)|^2=-K|\xi|^2\), integration gives
\[
 \sum_{j=1}^3 Q(\eta\xi_j)
       =\int_M|\nabla\eta|^2|\xi|^2
 \qquad(\eta\in C_c^\infty(M)).
\]
Choose \(\eta_R\in C_c^\infty(M)\) satisfying
\[
 \begin{gathered}
 0\le\eta_R\le1,\qquad
 \eta_R=1\ \text{on }M\cap B_R,\qquad
\operatorname{supp}\eta_R\subset B_{R^2},\\
 \int_M|\nabla\eta_R|^2\,dA
 \le\frac{C}{\log R}\longrightarrow0.
 \end{gathered}
\]
Thus
\[
 \sum_{j=1}^3Q(\xi_j)
 =\lim_{R\to\infty}\int_M|\nabla\eta_R|^2|\xi|^2\,dA=0.
\]

More precisely, for a smooth conformal background metric on \(\overline M\),
\[
 Q(f)=\int_{\overline M}(|\nabla_0 f|^2-Vf^2)\,dA_0
             -\int_{\partial\overline M}\beta f^2\,ds_0,
 \quad V\,dA_0=|h|^2dA,\quad \beta\,ds_0=q\,ds.
\]
The coefficients are smooth at the added points. Along the boundary,
$
 \beta=-\frac{\cos\theta}{\sin^2\theta}\partial_{\mu_0}\nu_3,
$
where \(\mu_0\) is the outward conormal for the background metric.
The cutoff approximation gives
\[
 \overline{C_c^\infty(M)}^{\,H^1(\overline M)}
 =H^1(\overline M),\qquad
 \Ind_\theta(M)=\Ind(Q;H^1(\overline M)).
\]
Let \(\varphi_1,\ldots,\varphi_{\Ind_\theta(M)}\) be an
\(L^2(\overline M,dA_0)\)-orthonormal basis of negative eigenfunctions.
Define the linear map
\[
 \mathcal L:\mathcal H\longrightarrow
 \mathbb R^{3\Ind_\theta(M)},\qquad
 \mathcal L(\alpha)=
 \left(\int_{\overline M}\xi_j\varphi_\ell\,dA_0\right)_{
 \substack{1\le j\le3\\1\le\ell\le\Ind_\theta(M)}}.
\]
Set \(\mathcal H_*=\ker\mathcal L\). Then
\[
 \dim\mathcal H_*
 =\dim\mathcal H-\operatorname{rank}\mathcal L
 \ge\dim\mathcal H-3\Ind_\theta(M).
\]
For \(\alpha\in\mathcal H_*\),
\[
 Q(\xi_j)\ge0,\qquad
 \sum_{j=1}^3Q(\xi_j)=0
 \quad\Longrightarrow\quad Q(\xi_j)=0.
\]
Thus, for \(j=1,2,3\),
\[
 (\Delta+|h|^2)\xi_j=0\quad\text{in }M,\qquad
 (\partial_\mu-q)\xi_j=0\quad\text{on }\partial M.
\]

By \cite[Lemma 1]{Ros06},
\[
 \begin{gathered}
 (\Delta+|h|^2)\xi=-2\langle h,\nabla\xi\rangle\nu=0,\quad
 \alpha=*d\langle a,X\rangle,\quad a\in\mathbb R^3,
 \end{gathered}
\]
where \(*\) is the intrinsic Hodge star. On \(\partial M\),
\[
 \alpha(\mu)=0
 \quad\Longrightarrow\quad
 \langle a,T\rangle=0.
\]
If \((a_1,a_2)\ne(0,0)\), each boundary component \(\Gamma\)
satisfies
\[
 \Gamma\subset\{x_3=0,\ a_1x_1+a_2x_2=c_\Gamma\}.
\]
The contact-angle condition gives
\(\bar\nabla_T\nu=0\), hence \(G|_\Gamma=\mathrm{const}\).
Schwarz reflection using \(|G|=k\) and the identity theorem imply
\(G\equiv\mathrm{const}\) on \(M\), contradicting nonplanarity.
Thus
$
 \mathcal H_*\subset\operatorname{span}\{*dx_3\},
 \dim\mathcal H-3\Ind_\theta(M)
 \le\dim\mathcal H_*\le1.
$
\end{proof}

\begin{lemm}\label{boundary-curvature-index}
If \(M\) is nonplanar and
$
 \int_{\partial M}h(\mu,\mu)\,ds=0,
$
then \(\Ind_\theta(M)\ge2\).
\end{lemm}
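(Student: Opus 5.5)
The plan is to produce a two-dimensional subspace on which \(Q\) is negative definite. It will be built from the constant function \(1\) and the vertical component \(\nu_3=\langle\nu,E_3\rangle\) of the normal, using the vanishing of \(\int_{\partial M}h(\mu,\mu)\) to decouple them. After that, a small perturbation makes the degenerate direction strictly negative.

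\emph{Step 1: boundary behaviour of \(\nu_3\) and of \(1\).} On \(\partial M\) we have \(E_3=-\sin\theta\,\mu+\cos\theta\,\nu\). Hence \(E_3^T=-\sin\theta\,\mu\) along \(\partial M\), and \(\nu_3=\cos\theta\) there. Since \(d\nu_3(V)=h(V,E_3^T)\), we get
\[
 \partial_\mu\nu_3=-\sin\theta\,h(\mu,\mu),\qquad
 \partial_\mu\nu_3-q\nu_3=-\frac{h(\mu,\mu)}{\sin\theta}.
\]
In the interior, \(\Delta\nu_3+|h|^2\nu_3=0\).

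\emph{Step 2: the form \(Q\) on \(\operatorname{span}\{1,\nu_3\}\).} We apply the integration-by-parts formula for \(Q(f,g)\) with \(f\in\{1,\nu_3\}\), formally for now. This gives
\[
 Q(1,\nu_3)=-\frac1{\sin\theta}\int_{\partial M}h(\mu,\mu)=0,\qquad
 Q(\nu_3,\nu_3)=-\cot\theta\int_{\partial M}h(\mu,\mu)=0,
\]
\[
 Q(1,1)=-\int_M|h|^2-\cot\theta\int_{\partial M}h(\mu,\mu)=-\int_M|h|^2<0 .
\]
The last inequality is strict because \(M\) is nonplanar.

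To make this rigorous we use the logarithmic cutoffs \(\eta_R\) from the proof of Lemma~\ref{surface-index-bound}, together with finite total curvature (Lemma~\ref{surface-ends}) and \(|\nu_3|\le1\). The error terms involve \(\int|\nabla\eta_R|^2\) and integrals of \(|h|^2\) or \(h(\mu,\mu)\) over the support of \(1-\eta_R\). These tend to zero, so \(Q(\eta_R)\), \(Q(\eta_R,\eta_R\nu_3)\) and \(Q(\eta_R\nu_3)\) converge to the values above. Alternatively, we can work directly on \(H^1(\overline M)\), where \(\Ind_\theta(M)=\Ind(Q;H^1(\overline M))\), and both \(1\) and \(\nu_3\) are smooth up to the added points.

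\emph{Step 3: perturbation.} Suppose \(h(\mu,\mu)\not\equiv0\) on \(\partial M\). Choose \(\varphi\) supported near a boundary point where \(h(\mu,\mu)\ne0\), so that
\[
 Q(\nu_3,\varphi)=-\frac1{\sin\theta}\int_{\partial M}\varphi\,h(\mu,\mu)\ne0 .
\]
Replace \(\varphi\) by \(\varphi'=\varphi-\frac{Q(\varphi,1)}{Q(1,1)}\,1\), so that \(Q(\varphi',1)=0\). Then \(Q(\nu_3,\varphi')=Q(\nu_3,\varphi)\ne0\), because \(Q(\nu_3,1)=0\). Set \(w_\varepsilon=\nu_3+\varepsilon\varphi'\). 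Then \(Q(1,w_\varepsilon)=0\) and
\[
 Q(w_\varepsilon)=2\varepsilon Q(\nu_3,\varphi)+\varepsilon^2Q(\varphi')<0
\]
for small \(\varepsilon\) of the appropriate sign. Thus \(Q\) is negative definite on \(\operatorname{span}\{1,w_\varepsilon\}\). This space is two-dimensional, since \(Q(1)<0\) and \(Q(1,w_\varepsilon)=0\) while \(Q(w_\varepsilon)<0\). Applying the same cutoff argument, the negative definiteness passes to compactly supported approximations, which gives \(\Ind_\theta(M)\ge2\).

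\emph{Step 4: the degenerate case.} I expect the main obstacle to be excluding \(h(\mu,\mu)\equiv0\) on \(\partial M\). By \eqref{boundary} we have \(h(\mu,T)=0\), and minimality gives \(h(T,T)=-h(\mu,\mu)=0\). Hence \(h\equiv0\) along \(\partial M\), so \(dG=0\) along each boundary curve. The Gauss map \(G\) is meromorphic and is constant on a curve, so by the identity theorem (or via the Schwarz reflection already used in Lemma~\ref{surface-index-bound}) it is constant. Then \(M\) is planar, a contradiction. The remaining technical care lies in the cutoff limits of Step 2, which need finite total curvature and the quadratic area growth from Lemma~\ref{surface-ends}.
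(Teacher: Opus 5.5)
Your proposal is correct and is essentially the paper's argument. The paper computes the same values \(Q(1)=-\int_M|h|^2<0\) and \(Q(1,\nu_3)=Q(\nu_3)=0\) via the compactified form. It then argues by contradiction: \(\Ind_\theta(M)\le1\) would force \(0\le 2tQ(\nu_3,f)+t^2Q(f)\) for all \(f\) with \(Q(f,1)=0\), hence \(Q(\nu_3,f)=0\) for every \(f\) and \(h(\mu,\mu)\equiv0\) on \(\partial M\). That case is excluded by the same Schwarz reflection of \(G\) you use in Step 4, so your direct perturbation is just the contrapositive of that step.
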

\begin{proof}
The integrals converge by Lemma~\ref{surface-ends}. Since
\[
 (\Delta+|h|^2)\nu_3=0,
 \qquad (\partial_\mu-q)\nu_3=-\frac{h(\mu,\mu)}{\sin\theta},
\]
Green's formula for the compactified form and the hypothesis give
\[
 Q(1)=-\int_M|h|^2\,dA<0,
 \qquad Q(1,\nu_3)=Q(\nu_3)=0.
\]
Suppose that \(\Ind_\theta(M)\le1\).
Since \(Q(1)<0\), for \(f\in H^1(\overline M)\) with \(Q(f,1)=0\),
\[
 0\le Q(\nu_3+tf)
 =2tQ(\nu_3,f)+t^2Q(f)\qquad(t\in\mathbb R).
\]
Hence \(Q(\nu_3,f)=0\). Since \(Q(\nu_3,1)=0\), for arbitrary \(f\),
\[
 Q(\nu_3,f)
 =Q\left(\nu_3,f-\frac{Q(f,1)}{Q(1)}\cdot1\right)=0.
\]
The Robin identity gives
\[
 0=(\partial_\mu-q)\nu_3
 =-\frac{h(\mu,\mu)}{\sin\theta}.
\]
By \eqref{boundary} and minimality,
\[
 h(\mu,\mu)=h(\mu,T)=h(T,T)=0
 \quad\Longrightarrow\quad
 \bar\nabla_T\nu=0.
\]
The Schwarz reflection argument in the proof of
Lemma~\ref{surface-index-bound} gives
\(G\equiv\mathrm{const}\), contradicting nonplanarity.
\end{proof}

\begin{lemm}\label{surface-periods}
If \(H_1(M;\mathbb Z)\) is
generated by compact components of \(\partial M\), then \(G\mapsto \frac{G}{k}\)
defines a complete, single-valued free-boundary minimal immersion with
the same height function. Reflecting this free-boundary immersion in \(\{x_3=0\}\)
gives a complete minimal immersion with regular embedded ends.
\end{lemm}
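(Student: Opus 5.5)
The plan is to work with the Weierstrass representation. On \(M\) write
\[
 \Phi=\Bigl(\tfrac12(1-G^2),\ \tfrac i2(1+G^2),\ G\Bigr)\omega,\qquad dX=\operatorname{Re}\Phi .
\]
Replacing \(G\) by \(\widetilde G=G/k\) and \(\omega\) by \(\widetilde\omega=k\omega\) leaves the third component \(G\omega\) unchanged. Hence the height \(x_3\) is unchanged, and the induced conformal structure is the same Riemann surface.

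Completeness and single-valuedness of \(\widetilde\Phi\) will come from the following steps.
\begin{enumerate}
\item \emph{Metric comparison.} The metric \(\tfrac14|\omega|^2(1+|G|^2)^2\) changes by a factor bounded above and below by positive constants depending on \(k\). So the new metric is complete, since \(M\) is complete. The zeros and poles of \(\widetilde G\) and \(\widetilde\omega\) coincide with those of \(G,\omega\), so the new immersion is regular.
\item \emph{Free boundary.} On \(\partial M\) we have \(|\widetilde G|=1\). By the paper's conventions, \(|G|=k\) on the boundary encodes \(\nu_3=\cos\theta\), since \(|G|\) is a function of \(\nu_3\) through stereographic projection. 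Thus \(\widetilde\nu_3=0\), i.e. contact angle \(\pi/2\). Because \(x_3\) is unchanged, the boundary still lies in \(\{x_3=0\}\).
\item \emph{Periods.} We must show \(\operatorname{Re}\oint_\gamma\widetilde\Phi=0\) for all \(\gamma\in H_1(M;\mathbb Z)\). By hypothesis it suffices to take \(\gamma\) a compact boundary component \(\Gamma\). Along \(\Gamma\) the tangent \(T\) lies in \(\{x_3=0\}\), so \(x_3\equiv0\) and the third period vanishes automatically. For the horizontal components, use the boundary relation: \(|\widetilde G|=1\) on \(\Gamma\) means \(\widetilde\nu\) is horizontal there, so \(\widetilde\Phi(T)\) is determined by \(G\omega(T)\) and \(\widetilde G\). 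Concretely, with \(|\widetilde G|=1\) we get \(\tfrac12(1-\widetilde G^2)\widetilde\omega=\tfrac12(\bar{\widetilde G}-\widetilde G)\,\widetilde G\widetilde\omega\), and similarly for the second component. Since \(G\omega(T)\) is purely imaginary along \(\Gamma\), because \(dx_3(T)=0\), the horizontal part of \(d\widetilde X(T)\) equals \(\operatorname{Im}(G\omega(T))\cdot(\text{function of }\widetilde G)\). I would then express this as \(J\,\widetilde\nu'\,|dx_3\circ J|\), i.e. as the rotation of the horizontal normal by \(\pi/2\) times the conormal derivative of \(x_3\).
\item \emph{Matching the old boundary.} For the original surface, on \(\Gamma\) the same computation gives the horizontal tangent as a rotation of \(\bar\nu\), scaled by \(\partial_\mu x_3/\sin\theta\). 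The new horizontal tangent has the same direction, since \(\arg\widetilde G=\arg G\), and is scaled by \(\partial_\mu x_3\). Therefore the new horizontal boundary velocity is \(\sin\theta\) times the old one. Since \(\Gamma\) closes up in the original immersion, \(\int_\Gamma\operatorname{Re}\widetilde\Phi_{1,2}=\sin\theta\int_\Gamma dX_{1,2}(T)=0\).
\end{enumerate}

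Once the boundary period computation is in place, the reflection step is routine. Schwarz reflection across \(\{x_3=0\}\) of a free-boundary minimal surface meeting the plane orthogonally gives a smooth minimal immersion of the doubled surface. It is complete, and its ends come from the ends of \(M\), each of finite total curvature by Lemma~\ref{surface-ends}. At an interior end of \(M\), \(\omega\) has a pole of order two and \(G\) is regular, and rescaling \(G\) preserves the pole orders. Hence the Gauss-map limit stays single-valued and the end stays embedded, i.e. a catenoidal or planar end with zero log-period in the horizontal directions. A boundary end reflects to a full end whose period is checked by a small loop that crosses the boundary twice. The reflected loop integrates \(\operatorname{Re}\widetilde\Phi\) over the two boundary arcs, and these contributions cancel by the same \(\sin\theta\)-scaling applied to the original closed half-loop.

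The main obstacle is the period step, especially making the scaling identity between old and new horizontal boundary velocities precise. It relies on the capillary relation \(\nu=\cos\theta E_3+\sin\theta\bar\nu\) from Section~\ref{sec:preliminaries}. I would first verify that identity pointwise in the complex coordinate, writing \(G=ke^{i\phi}\) on \(\Gamma\). I expect \(d\widetilde X_{1,2}(T)=\sin\theta\, dX_{1,2}(T)\) up to the normalization \(k=\cot(\theta/2)\)-type constants.
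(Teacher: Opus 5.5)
Your proposal is correct, and its key step differs from the paper's. The paper's \(\omega=2\partial x_3\) is your \(G\omega\), so your data \((G/k,k\omega)\) are exactly the paper's deformation \((G/k,\omega)\).

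\emph{The paper's route.} It proves single-valuedness through the complex periods. Vanishing real horizontal periods give \(\int_\Gamma\omega/G=\overline{\int_\Gamma G\omega}\). Then \(|G|=k\) and \(\omega(T)\in i\mathbb R\) on \(\Gamma\) give \(\overline{\int_\Gamma G\omega}=-k^2\int_\Gamma\omega/G\), so both complex periods vanish on every cycle. Completeness and reflection are quoted from Choe, and the ends are handled by writing \(\widetilde\Phi=T_k\Phi\) with \(T_k\in O(3,\mathbb C)\).

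\emph{Your route.} You prove a pointwise identity on the boundary. Writing \(G=ke^{i\phi}\) there, both horizontal velocities point along \((\sin\phi,-\cos\phi)\), and one gets \(d\widetilde X(T)=\frac{2k}{1+k^2}\,dX(T)=\sin\theta\,dX(T)\) for every tangent \(T\) to \(\partial M\). So each boundary curve of the new surface is a homothetic copy of the old one, and its periods vanish because the old curve closes up.

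\emph{What each buys.} Your identity gives more geometric information about the boundary. Your comparison of length elements, whose ratio lies between \(\min(k,k^{-1})\) and \(\max(k,k^{-1})\), makes completeness self-contained. The paper's computation is shorter and gives the stronger algebraic fact \(\int_\gamma G\omega=\int_\gamma\omega/G=0\) on all cycles. Hence the whole L\'opez--Ros family \(G\mapsto\lambda G\) is single-valued, not just \(\lambda=1/k\).

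One correction in the reflection step. Your paragraph on boundary ends is muddled, since the half-loop around a boundary puncture is not a closed curve in \(M\). No period check is needed there. The doubled immersion is automatically single-valued: its two halves agree on \(\partial M\subset\{x_3=0\}\), which the reflection fixes pointwise.

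What must be verified at every end, including reflected boundary ends (using the Schwarz-reflected data from Lemma~\ref{surface-ends}), is that \(\widetilde\Phi\) still has a pole of order exactly two. This holds because \(\widetilde\Phi\) is the image of \(\Phi\) under a fixed invertible complex linear map. It is cleaner than tracking the pole orders of \(G\) and \(\omega\) separately, which depends on orientation. The residue is then real by single-valuedness and orthogonal to the double-pole coefficient by conformality, which gives the embedded graphical ends.
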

\begin{proof}
For a compact boundary component \(\Gamma\), the vanishing real horizontal
periods and \(|G|=k\) give
\[
 \int_\Gamma\frac\omega G
 =\overline{\int_\Gamma G\omega}
 =-k^2\int_\Gamma\frac\omega G.
\]
The identity above and the hypothesis on \(H_1(M;\mathbb Z)\) give
\[
 \int_\gamma G\omega=\int_\gamma\frac{\omega}{G}=0
 \quad\text{for every closed curve }\gamma\subset M.
\]
Thus the data \(\widetilde G=\frac{G}{k}\) and
\(\widetilde\omega=\omega\) define a single-valued free-boundary
immersion with the same height.
By \cite[Theorems 1 and 2]{Choe93}, it is complete and extends
by reflection.
The deformation satisfies \(\widetilde\Phi=T_k\Phi\), with
\(T_k\in O(3,\mathbb C)\), so its double-pole coefficient is
\(\widetilde A=T_kA\ne0\).
At a reflected end,
\[
 \Phi=(\frac{A}{z^2}+\frac{B}{z}+O(1))\,dz,\qquad
 A\cdot A=0,\quad A\ne0,\quad B\in\mathbb R^3,\quad A\cdot B=0.
\]
Since \(B\in\mathbb R^3\) and \(A\cdot B=0\),
\(B\perp\operatorname{Re}A\) and \(B\perp\operatorname{Im}A\).
The end is the graph of a single-valued function \(u\) over
\(\operatorname{span}_{\mathbb R}
\{\operatorname{Re}A,\operatorname{Im}A\}\), with
\(u=O(\log r)\) and \(|Du|=O(r^{-1})\).
\end{proof}

\begin{proof}[Proof of Theorem~\ref{index-one-surface}]
Index one implies nonplanarity. Lemma~\ref{surface-index-bound} gives
\[
 2g+b+4e_i+2e_b\le6.
\]
Write \(\partial\overline M=\Gamma_1\sqcup\cdots\sqcup\Gamma_b\).

If \(\partial M\) is compact, then
\[
 e_i=1,\qquad g=0,\qquad b\in\{1,2\},\qquad
 H_1(M;\mathbb Z)=\sum_{\alpha=1}^{b}\mathbb Z[\Gamma_\alpha].
\]
By Lemma~\ref{surface-periods} and
\cite[Theorem 3]{Schoen83}, the reflected immersion
\(\widehat M\) is a catenoid. Thus
\[
 0=\operatorname{genus}(\widehat M)=2g+b-1
 \quad\Longrightarrow\quad b=1.
\]
Since \(\widehat M\cap\{x_3=0\}\) is compact, the reflection
plane contains its circle of minimum radius.
The inverse deformation gives a horizontal catenoid truncation.

If \(\partial M\) is noncompact, Lemma~\ref{surface-index-bound}
gives \(e_i=0\).
The case \(e_b=1\) would give a half-plane by \eqref{mono},
contradicting index one. Hence
\[
 e_i=0,\qquad e_b=2,\qquad g=0,\qquad b\in\{1,2\}.
\]
Suppose \(b=2\), and let \(p_1,p_2\) be the two deleted boundary
points. If \(p_1,p_2\in\Gamma_1\), then
\[
 H_1(M;\mathbb Z)=\mathbb Z[\Gamma_2].
\]
The same application of Lemma~\ref{surface-periods} and
\cite[Theorem 3]{Schoen83} gives a catenoid, and
\[
 0=\operatorname{genus}(\widehat M)=2g+b-1=1,
\]
a contradiction.

If \(p_1,p_2\) lie on different components of
\(\partial\overline M\), each boundary component \(\Gamma\)
has an arclength parametrization with
\[
 \gamma(s)=se+O(\log|s|),\qquad
 \gamma'(s)=e+O(|s|^{-1})\qquad(s\to\pm\infty),
\]
where \(e\) is a horizontal unit vector.
Choose \(\sigma_R\subset\partial B_R\cap\{x_3=0\}\) so that
\((\Gamma\cap\overline B_R)\cup\sigma_R\) is a simple closed curve.
Let \(\alpha_R,\beta_R\) be its two exterior angles. Then
\[
 \int_{\sigma_R}\kappa\,ds\longrightarrow\pi,\qquad
 \alpha_R,\beta_R\longrightarrow\frac{\pi}{2}.
\]
Gauss--Bonnet gives
\[
 \int_\Gamma\kappa\,ds
 =\lim_{R\to\infty}
 \left(2\pi-\int_{\sigma_R}\kappa\,ds-\alpha_R-\beta_R\right)
 =0.
\]
By minimality and the contact-angle condition,
\[
 h(\mu,\mu)=-h(T,T)
 =-\sin\theta\,\langle\bar\nabla_T\bar\nu,T\rangle.
\]
Consequently,
\[
 \int_{\partial M}h(\mu,\mu)\,ds
 =\sum_\Gamma\int_\Gamma h(\mu,\mu)\,ds=0,
\]
contradicting Lemma~\ref{boundary-curvature-index}.
Hence \(b=1\).

Identify the conformal compactification with \(\overline{\mathbb D}\),
and let \(p_1,p_2\in\partial\mathbb D\) denote the two ends. Then
\[
 \begin{aligned}
 x_3(z)&=\sum_{j=1}^2a_j
       \frac{1-|z|^2}{|z-p_j|^2},\qquad a_j>0,\\
 \omega=2\partial x_3
       &=2\sum_{j=1}^2\frac{a_jp_j}{(p_j-z)^2}\,dz.
 \end{aligned}
\]
On the doubled sphere, \(\omega\) has no other poles, so
\[
 \operatorname{ord}_{p_1}\omega
 =\operatorname{ord}_{p_2}\omega=-2,\qquad
 \sum_{\omega(q)=0}\operatorname{ord}_q\omega=4-2=2.
\]
Hopf's lemma gives \(\omega\ne0\) on
\(\partial\mathbb D\setminus\{p_1,p_2\}\).
Schwarz reflection preserves the order of each zero; hence
\(\sum_{q\in\mathbb D,\ \omega(q)=0}\operatorname{ord}_q\omega=1\).
Thus \(\omega\) has exactly one zero in \(\mathbb D\), of order one.

Choose a disk coordinate with the unique zero of \(\omega\) at \(0\).
Since this zero is simple and the immersion is regular,
\(\operatorname{ord}_0G=\pm1\).
With a suitable choice of horizontal coordinates, the boundary
condition \(|G|=k\) gives \(G=kz\) or \(G=\frac{k}{z}\).
Let \(a_1,a_2>0\) be the coefficients in the expression for \(x_3\).
Then
\[
 \omega(0)=2\left(\frac{a_1}{p_1}+\frac{a_2}{p_2}\right)dz=0.
\]
Since \(|p_1|=|p_2|=1\), we obtain \(a_1=a_2\) and \(p_2=-p_1\).
Using \(\overline{p_1}z\) as the disk coordinate, we may take
\(p_1=1\) and \(p_2=-1\).
With a suitable choice of scale and horizontal coordinates,
integrating \(dX=\operatorname{Re}\Phi\) in
\(z=\tanh\left(\frac{s+it}{2}\right)\) gives
the parametrization~\eqref{hairpin-param}, extended to \(-1<c<1\).

\end{proof}

\section{Finite total curvature and asymptotic structure}\label{sec:structure}

\begin{proof}[Proof of Theorem~\ref{main}]
Let \(r=|X|\). Finite index implies stability outside a compact set.
The curvature estimates of Wang--Zhang
\cite[Corollary 1.10 and Remark 1.11]{WZ26}, together with local elliptic
estimates for capillary graphs, give
\[
 \sup_{M\cap(B_{2R}\setminus B_R)}
 \bigl(R|h|+R^2|\nabla h|\bigr)\le C
\]
for all sufficiently large \(R\).

Choose the orientation so that \(\cos\theta\le0\). By \eqref{mono}
with \(p=0\), the capillary area ratio has a finite limit, since
\(\Vol(D\cap B_R)\le\omega_nR^n\).

Let \(R_j\to\infty\), \(M_j=R_j^{-1}M\), and
\[
 A_{s,t}=\{s<|x|<t\},\qquad
 \mathcal A(R)=\frac{\Vol(M\cap B_R)-\cos\theta\,\Vol(D\cap B_R)}{R^n}.
\]
For fixed \(0<s<t<\infty\) and large \(j\),
\[
 Q_{M_j}(f)\ge0\quad(f\in C_c^\infty(M_j\cap A_{s,t})),\qquad
 \Vol(M_j\cap B_t)\le Ct^n.
\]
By \cite[Theorem 6.8]{WZ26} and the monotonicity
formula~\eqref{mono}, a subsequence converges smoothly with finite
multiplicity on compact annuli to a proper, two-sided stable
capillary minimal cone. Let \(C\) be a connected component of this cone.

If \(C\subset\{x_{n+1}=0\}\), then \(h_C=0\).
Otherwise, when \(\partial C\ne\varnothing\),
\eqref{stable-cone-range} and
\cite[Theorem 1.16]{WZ26} give \(h_C=0\).
When \(\partial C=\varnothing\), set
\(L_C=C\cap\mathbb S^n\) and \(\xi=x_{n+1}|_{L_C}\). Then
\[
 \Delta_{L_C}\xi=-(n-1)\xi,\qquad \xi\ge0,\qquad
 0=\int_{L_C}\Delta_{L_C}\xi
   =-(n-1)\int_{L_C}\xi.
\]
Hence \(\xi\equiv0\), so \(C\subset\{x_{n+1}=0\}\)
and \(h_C=0\).

If \(r|h|\not\to0\), then there are points \(p_j\), with
\(r_j=|p_j|\to\infty\) and \(r_j|h(p_j)|\ge\epsilon_0>0\).
Rescaling by \(r_j^{-1}\) gives a nonflat limiting component, a contradiction.

To prove that \(M\) has finitely many ends, we apply Morse theory
to the function \(r^2/2\). Since \(r|h|\to0\), for sufficiently large \(r\),
\[
 \begin{aligned}
 \operatorname{Hess}_M\left(\frac{r^2}{2}\right)
 =g-(X\cdot\nu)h
 \ge\frac{3}{4}g, 
 \operatorname{Hess}_{\partial M}\left(\frac{r^2}{2}\right)
 =g^{\partial M}
 -\frac{(X\cdot\bar\nu)h|_{T\partial M}}{\sin\theta}
 \ge\frac{3}{4}g^{\partial M}.
 \end{aligned}
\]
Choose \(0\le\chi_R\le1\) with \(\chi_R=1\) on \(B_R\),
\(\operatorname{supp}\chi_R\subset B_{2R}\), and
\(|d\chi_R|\le\frac{C}{R}\).
By \eqref{height-trace},
\[
 \sin\theta\,|\partial M\cap B_R|
 \le\int_M|d\chi_R|
 \le\frac{C}{R}\Vol(M\cap B_{2R})
 \le CR^{n-1}.
\]

Choose a regular radius \(R_0\) beyond which both Hessians are positive.
By properness and Morse theory, each connected component
\(\Sigma\subset\partial M\) satisfies
\[
 \operatorname{Crit}\left(\left.\frac{r^2}{2}\right|_\Sigma\right)\setminus B_{R_0}
 =\begin{cases}
  \varnothing, & \Sigma\cap B_{R_0}\ne\varnothing,\\
  \{p_\Sigma\}, & \Sigma\cap B_{R_0}=\varnothing.
 \end{cases}
\]
In the second case, \(p_\Sigma\) is the unique minimum and \(\Sigma\)
is unbounded. Write \(r_0=|p_\Sigma|\).
For \(R>4r_0\), choose \(p\in\Sigma\) with \(|p|=\frac{R}{2}\). On
\(B_{\frac{R}{8}}(p)\), tangential differentiation of the angle condition gives
\[
 |H_\Sigma|\le\frac{(n-1)|h|}{\sin\theta}\le \frac{C_{n,\theta}}{R}.
\]
The monotonicity inequality for bounded mean curvature, applied within
the hyperplane \(\{x_{n+1}=0\}\), therefore yields
\[
 |\Sigma\cap B_R|
 \ge|\Sigma\cap B_{\frac{R}{8}}(p)|
 \ge e^{-\frac{C_{n,\theta}}{8}}\omega_{n-1}(\frac{R}{8})^{n-1}
 \ge\gamma_{n,\theta}R^{n-1}.
\]
Here \(\gamma_{n,\theta}>0\).
For any \(m\) such components \(\Sigma_1,\ldots,\Sigma_m\), choose
\(R>4\max_i\min_{\Sigma_i}r\). Then
\[
 m\gamma_{n,\theta}R^{n-1}\le|\partial M\cap B_R|
                             \le C_{n,\theta}R^{n-1}.
\]
The area estimates give
\[
 \#\{\Sigma\in\pi_0(\partial M):\Sigma\cap B_{R_0}=\varnothing\}
 \le \frac{C_{n,\theta}}{\gamma_{n,\theta}}.
\]
Enlarge \(R_1\) so that all critical points of
\(\left.\frac{r^2}{2}\right|_{\partial M}\) lie in \(B_{R_1}\).
An interior critical point outside \(B_{R_1}\) would be a
nondegenerate local minimum.
By \cite[Theorem 3.2]{Mil63}, it would create a component of
\(M\cap B_R\) that remains disjoint from \(M\cap B_{R_1}\)
as \(R\) increases, contradicting connectedness of \(M\).
Thus \(r\) has no critical points on either \(M\) or \(\partial M\)
outside \(B_{R_1}\).
Properness and the boundary-preserving flow
\cite[Lemma 2-10]{BNR16} give
\[
 \begin{gathered}
 M\setminus B_{R_*}
 \cong(M\cap\partial B_{R_*})\times[R_*,\infty),\\
 M\setminus B_{R_*}=E_1\sqcup\cdots\sqcup E_N,\qquad N<\infty.
 \end{gathered}
\]
Here \(R_*>R_1\), and finiteness of \(N\) follows from compactness
of \(M\cap\partial B_{R_*}\).
Each end has connected level sets and annular regions.
The same flow restricts to \(\partial M\).

Fix \(B>2\). A standard covering argument, using the intrinsic volume
lower bound and Euclidean volume growth, covers
\(E\cap\{2R\le r\le BR\}\) by at most \(C_B\) intrinsic balls of
radius \(\frac{R}{4}\), centred in this annulus. Connectedness then joins any
two points \(p,q\) by a path \(\gamma\subset E\cap\{R<r<2BR\}\)
with \(\ell(\gamma)\le C_BR\). Hence
\[
 |\nu(p)-\nu(q)|\le\int_\gamma|h|\,ds
 \le C_B\sup_{E\cap\{r\ge R\}}r|h|.
\]
Taking the supremum gives
\begin{equation}\label{normal-oscillation}
 \operatorname{osc}_{E\cap\{2R\le r\le BR\}}\nu
 \le C_B\sup_{E\cap\{r\ge R\}}r|h|.
\end{equation}
Integrating along a path from radius \(R\) to radius \(\eta R\), and taking
the inner product with its initial normal, we obtain
\[
 |X\cdot\nu|\le\eta R+o_\eta(R),\qquad X\cdot\nu=o(r).
\]
The second assertion follows by letting \(R\to\infty\), then \(\eta\to0\).

For a boundary end \(E\), choose \(p_R\in E\cap\partial M\) with \(|p_R|=3R\) and set
\(\nu_R=\sin\theta\,\bar\nu_R+\cos\theta\,E_{n+1}\). Projection onto
\(\nu_R^\perp\) along \(\bar\nu_R\) is given by
\[
 \Pi_RX=X-\frac{X\cdot\nu_R}{\sin\theta}\bar\nu_R
       \in\nu_R^\perp.
\]
The map \(\Pi_R\) preserves \(x_{n+1}\). On fixed enlarged annuli,
\[
 \nu\cdot\bar\nu_R=\sin\theta+o(1),\quad
 X\cdot\nu_R=o(r),\quad |\Pi_RX|=r(1+o(1)).
\]
On a connected level set \(\Sigma_{r_0}\), define
\[
 \Psi_R(X)=\frac{\Pi_RX}{|\Pi_RX|}\in H_R,\qquad
 H_R=\nu_R^\perp\cap S^n\cap\{x_{n+1}\ge0\}.
\]
A vector in the kernel of \(D\Psi_R\) has
\[
 V=\lambda\left(X-\frac{X\cdot\nu}{\bar\nu_R\cdot\nu}\bar\nu_R\right),
 \quad 0=X\cdot V
 =\lambda\left(r_0^2-
 \frac{(X\cdot\nu)(X\cdot\bar\nu_R)}{\bar\nu_R\cdot\nu}\right).
\]
The last factor is \(r_0^2(1+o(1))\), so \(\lambda=0\) and the map is
locally invertible.
Along \(\partial\Sigma_{r_0}\subset\partial M\),
\[
 |\nabla_{\Sigma_{r_0}}x_{n+1}|^2
 =\sin^2\theta-
 \frac{\cos^2\theta\,(X\cdot\nu)^2}{r_0^2-(X\cdot\nu)^2}
 =\sin^2\theta+o(1)>0.
\]
Since \(x_{n+1}>0\) in the interior of \(M\),
\[
 (\Psi_R(X))_{n+1}=\frac{x_{n+1}}{|\Pi_RX|},\qquad
 \Psi_R(\partial\Sigma_{r_0})\subset\partial H_R,\quad
 \Psi_R(\Sigma_{r_0}^\circ)\subset H_R^\circ.
\]
It is a covering map from a connected compact level set onto a simply
connected hemisphere, hence a diffeomorphism.
Writing \(\Pi_RX(r_0,\omega)=\rho(r_0,\omega)\omega\), we have
\[
 \partial_{r_0}\rho
 =\frac{\rho r_0}{r_0^2-\frac{(X\cdot\nu)(X\cdot\bar\nu_R)}{\bar\nu_R\cdot\nu}}
 =1+o(1)>0.
\]
Thus the portion of \(E\) in the annulus is the single-valued graph
\(X=x+u(x)\bar\nu_R\) over its image under \(\Pi_R\).
For an interior end \(E\), choose \(p_R\in E\), \(|p_R|=3R\), set
\(\nu_R=\nu(p_R)\), and use
the orthogonal projection \(\Pi_RX=X-(X\cdot\nu_R)\nu_R\) onto
\(\nu_R^\perp\). The same argument applies to the map
\(X\mapsto\frac{\Pi_RX}{|\Pi_RX|}\) onto \(\nu_R^\perp\cap S^n\cong S^{n-1}\).

We next use a standard Allard-type harmonic approximation argument
\cite{Allard72}; see also \cite[Chapter 5]{Simon83}.
For \(0<\alpha<n-1\), we prove
\begin{equation}\label{contraction}
 \sup_{E\cap\{r\ge LR\}}r|h|
 \le L^{-\alpha}\sup_{E\cap\{r\ge R\}}r|h|
 \qquad(R\ge R_0)
\end{equation}
with fixed \(L>6\). The linear estimate is
\begin{equation}\label{linear-decay}
 |D^2w|\le \frac{C}{r}\quad\Longrightarrow\quad
 |D^2w|\le C'r^{-n}\quad(r\ge6),
\end{equation}
where \(r=|x|\) and \(w\) is harmonic on \(\{|x|>2\}\), with
\(w(x_*)=0\), \(Dw(x_*)=0\), and \(|x_*|=3\).
The Hessian bound implies
\[
 \sup_{|x|=3}(|w|+|Dw|)\le C_n,\qquad w=O(r\log r).
\]
The exterior spherical harmonic expansion therefore gives
\(w=a_0+\ell\cdot x+O_2(r^{2-n})\). The bound at \(|x|=3\) controls the
\(O_2(r^{2-n})\) term. Differentiating twice gives \eqref{linear-decay}.
Here \(C_n\) and \(C'\) depend only on \(n\) and \(C\).
For harmonic \(w\) on \(\{x_1\ge0,|x|>2\}\) satisfying
\(\partial_1w=0\) on the boundary, even reflection across \(x_1=0\)
gives the same estimate.

It suffices first to show
\begin{equation}\label{eg:annular-improvement}
 \sup_{E\cap\{LS\le r\le2LS\}}r|h|
 \le L^{-\alpha}\sup_{E\cap\{r\ge S\}}r|h|
 \qquad(S\ge S_0).
\end{equation}
Suppose that \eqref{eg:annular-improvement} fails. Choose \(S_j\to\infty\)
with \(\epsilon_j=\sup_{E\cap\{r\ge S_j\}}r|h|>0\).
Apply the dilation \(X\mapsto S_j^{-1}X\) to the end \(E\). After a horizontal rotation, the unit normal at \(S_j^{-1}p_{S_j}\) is
\(\nu_0=\sin\theta\,\bar\nu_0+\cos\theta\,E_{n+1}\).
Choose orthonormal coordinates \(x=(x_1,\ldots,x_n)\)
on \(\nu_0^\perp\) such that
\(\langle x,E_{n+1}\rangle=\sin\theta\,x_1\).
Since \(\bar\nu_0\) is horizontal, the contact boundary
corresponds to \(x_1=0\). Write
\[
 X=x+u_j(x)\bar\nu_0,\qquad
 b_j=u_j(x_j),\qquad
 w_j=\frac{u_j-b_j}{\epsilon_j},
\]
where \(x_j=\Pi_0(\frac{p_{S_j}}{S_j})\) and
\(\Pi_0Y=Y-\frac{(Y\cdot\nu_0)\bar\nu_0}{\sin\theta}\) projects onto
\(\nu_0^\perp\) along \(\bar\nu_0\). Since \(\nu_0\) is the unit normal at \(S_j^{-1}p_{S_j}\),
we have \(Du_j(x_j)=0\). The definitions and
\eqref{normal-oscillation} give
\[
 \begin{gathered}
 b_j=o(1),\qquad w_j(x_j)=0,\qquad Dw_j(x_j)=0,\\
 |u_j-b_j|+|Du_j|\le C_B\epsilon_j\quad(2<|x|<B),\qquad
 |D^2w_j|\le \frac{C_\theta}{|x|}.
 \end{gathered}
\]
For every \(B>2\), the functions \(u_j\) and \(w_j\)
are defined on \(\{x_1\ge0,\ 2<|x|<B\}\)
for all sufficiently large \(j\).

The area integrand and the capillary minimal graph equations are
\begin{equation}\label{rescaled-graph-equations}
 \begin{gathered}
 \mathcal L(p)=
 \sqrt{1-2\cos\theta\,p_1+p_1^2
       +\sin^2\theta\sum_{i=2}^np_i^2},\\
 \operatorname{div}D\mathcal L(Du_j)=0\quad(x_1>0),\qquad
 D_1\mathcal L(Du_j)=-\cos\theta\quad(x_1=0).
 \end{gathered}
\end{equation}
Since \(D(u_j-b_j)=Du_j=\epsilon_jDw_j\),
\eqref{rescaled-graph-equations} gives
\[
 \operatorname{div}(a_jDw_j)=0,\qquad
 (a_jDw_j)_1=0\quad(x_1=0),\qquad
 a_j=\int_0^1D^2\mathcal L(\tau Du_j)\,d\tau.
\]
Since \(Du_j\to0\) and \(D^2\mathcal L(0)=\sin^2\theta I\),
we have \(a_j\to\sin^2\theta I\) locally uniformly.

The Hessian bound gives a uniform \(C^1\) bound for \(a_j\) on smaller annular regions, where the matrices are uniformly elliptic. Interior and boundary
Schauder estimates \cite[Theorem 7.3]{ADN59} give
\(C^2\) convergence, along a subsequence, to a function satisfying
\[
 \Delta w=0,\quad\partial_1w=0,\quad |D^2w|\le \frac{C_\theta}{|x|},
 \quad w(x_*)=0,\quad Dw(x_*)=0,\quad |x_*|=3.
\]
By \eqref{linear-decay}, \(\sup_{L\le|x|\le2L}|x||D^2w|\le C_0L^{1-n}\).
Choose \(L\) so large that the corresponding curvature bound satisfies
\[
 C_1L^{1-n}<\tfrac12L^{-\alpha}.
\]
Smooth convergence then contradicts the failure of
\eqref{eg:annular-improvement}.
For \(r(p)\ge LS\), applying \eqref{eg:annular-improvement} at
\(S'=\frac{r(p)}{L}\ge S\) gives
\(r(p)|h(p)|\le L^{-\alpha}\sup_{E\cap\{r\ge S\}}r|h|\).
Taking the supremum proves \eqref{contraction}.

For an interior end, write \(X=x+u_j(x)\nu_0\), where \(\nu_0\) is the
reference unit normal. The area integrand is \(\sqrt{1+|p|^2}\), and the
same rescaling gives \(\Delta w=0\) on
\(\{x\in\mathbb R^n:|x|>2\}\). Thus \eqref{linear-decay} gives the same contraction.
Iteration and the finiteness of the ends give, for every \(0<\alpha<n-1\),
\[
 |h|\le C_\alpha r^{-1-\alpha}\qquad(r\ge R_0).
\]

Choose \(1<\alpha<n-1\). Extrinsic Euclidean volume growth gives
\[
 \int_{M\setminus B_R}|h|^n
 \le C_\alpha\sum_{j=0}^\infty(2^jR)^{-n\alpha}
 \le C_\alpha R^{-n\alpha}.
\]
Thus \(M\) has finite total curvature.

We next extend the asymptotic analysis in
\cite[Section 2]{Schoen83} and \cite[Theorem A.4]{CG26}
to the capillary setting, proving that each end has a unique
tangent cone at infinity with multiplicity one.
Integrating the curvature estimate along paths in an end \(E\),
we obtain
\[
 \nu=\nu_\infty+O(r^{-\alpha}),\qquad
 \nu_\infty\cdot E_{n+1}=\cos\theta\quad\text{for a boundary end}.
\]
Set
$
 e_\infty=
 \begin{cases}
  \nu_\infty, & \text{for an interior end},\\
  \frac{\nu_\infty-\cos\theta\,E_{n+1}}{\sin\theta},
    & \text{for a boundary end}.
 \end{cases}
$
The preceding projection argument gives, outside a compact set,
\[
 X=x+u(x)e_\infty,\qquad
 |Du(x)|\le C|x|^{-\alpha},\qquad |x|\sim r.
\]
Here \(x\in\nu_\infty^\perp\), with \(x_{n+1}\ge0\) for a boundary end.
Since \(\alpha>1\),
\[
 \int_R^\infty\sup_{|x|=t}|Du(x)|\,dt
 \le CR^{1-\alpha},\qquad
 \operatorname{osc}_{|x|=R}u\le CR^{1-\alpha}\longrightarrow0.
\]
Thus \(u(x)\to b\) as \(|x|\to\infty\), with
\(|u-b|\le C|x|^{1-\alpha}\).
Interior and boundary Schauder estimates give
\begin{equation}\label{weak}
 D^j(u-b)=O(r^{1-\alpha-j})\qquad(j\ge0).
\end{equation}
Thus each end has a unique tangent cone at infinity,
with multiplicity one.
If the limiting hyperplane is not horizontal, then it contains a ray with
\(x_{n+1}\le-\gamma r\) for some \(\gamma>0\). Since \(u(x)\to b\), along this ray,
\[
 X\cdot E_{n+1}
 =x\cdot E_{n+1}+u(x)(e_\infty\cdot E_{n+1})
 \le-\gamma r+O(1)<0
\]
for sufficiently large \(r\), contradicting
\(M\subset\{x_{n+1}\ge0\}\). Thus interior ends are horizontal.
For a boundary end, write \(x'=(x_1,\ldots,x_{n-1})\) and
\[
 x_n=-\cot\theta\,x_{n+1}+b+v(x',x_{n+1}),\qquad
 V(x',\xi)=v(x',\sin\theta\,\xi),\qquad \rho=|(x',\xi)|.
\]
The minimal graph equation and contact angle condition give
\begin{equation}\label{end-PDE}
 |\Delta V|\le C|DV|\,|D^2V|,\qquad
 V_\xi=-\cos\theta\bigl(\sqrt{1+|D_{x'}V|^2}-1\bigr)\quad(\xi=0).
\end{equation}
Choose \(0<\eta<\frac{1}{2}\) and apply \eqref{weak} with
\(\alpha=n-1-\eta\). Then
\[
 \begin{gathered}
 V=O_k(\rho^{2-n+\eta}),\\
 \Delta V=O_k(\rho^{1-2n+2\eta}),\qquad
 V_\xi=O_k(\rho^{2-2n+2\eta}).
 \end{gathered}
\]

Choose a smooth radial function \(\chi\), equal to zero on an inner ball
and to one outside a larger ball. Define \(\widetilde V(x',\xi)=(\chi V)(x',|\xi|)\).
Then, in the sense of distributions,
\begin{equation}\label{reflected-end-source}
 \Delta\widetilde V=\lambda
       =F\,dx'\,d\xi+2G\,dx'\,\delta_{\{\xi=0\}},
 \qquad
 \int_{\mathbb R^n}(1+|\zeta|)\,d|\lambda|(\zeta)<\infty.
\end{equation}
Here \(F\) is the even extension of \(\Delta(\chi V)\), and
\(G=\partial_\xi(\chi V)|_{\xi=0}\).

The Newtonian potential representation is
\[
 \widetilde V(x)
 =-\frac{1}{(n-2)|S^{n-1}|}
       \int_{\mathbb R^n}|x-\zeta|^{2-n}\,d\lambda(\zeta).
\]
Both sides vanish at infinity. Their difference is harmonic on
\(\mathbb R^n\) and hence vanishes identically by the maximum principle.

For \(|\zeta|<\frac{\rho}{2}\),
\[
 \bigl||x-\zeta|^{2-n}-\rho^{2-n}\bigr|
      \le C\rho^{1-n}|\zeta|.
\]
Since \(\int|\zeta|\,d|\lambda|<\infty\), the error is \(O(\rho^{1-n})\) on
\(\{|\zeta|<\frac{\rho}{2}\}\) and on
\(\{|\zeta|\ge\frac{\rho}{2},\ |x-\zeta|\ge\frac{\rho}{4}\}\).
Near \(x\), we use
\[
 \begin{aligned}
 \int_{B_{\frac{\rho}{4}}(x)}|x-\zeta|^{2-n}\,d\zeta
 &\le C_n\rho^2,\\
 \int_{B_{\frac{\rho}{4}}(x)\cap\{\zeta_n=0\}}
 |x-\zeta|^{2-n}\,d\mathcal H^{n-1}(\zeta)
 &\le C_n\rho.
 \end{aligned}
\]
The decay of \(F\) and \(G\), together with rescaled Poisson and Neumann
estimates, gives
\[
 V=a\rho^{2-n}+O_k(\rho^{1-n}).
\]

Substituting this expansion into \eqref{end-PDE} gives
\[
 \Delta V=O_k(\rho^{1-2n}),\qquad V_\xi=O_k(\rho^{2-2n}).
\]
Consequently,
\[
\int_{B_R}|\zeta|^2\,d|\lambda|(\zeta)
 \le C\left(1+\int_1^R s^{2-n}\,ds\right)
 \le
 \begin{cases}
  C(1+\log R), & n=3,\\
  C, & n\ge4.
 \end{cases}
\]
The error after expanding the Newtonian kernel to first order on
\(|\zeta|<\frac{\rho}{2}\) is bounded by
\(C\rho^{-n}\int_{B_{\frac{\rho}{2}}}|\zeta|^2\,d|\lambda|\).
The integral over \(|\zeta|\ge\frac{\rho}{2}\) is \(O(\rho^{3-2n})\), using
the ball and hyperplane kernel bounds near \(\zeta=x\).
Since \(\lambda\) is even in \(\xi\), \(\int\xi\,d\lambda=0\), so the
first-order Taylor term is \((d\cdot x')\rho^{-n}\), with
\(d\in\mathbb R^{n-1}\). Returning to \((x',x_{n+1})\) and applying
rescaled Poisson and Neumann estimates proves the boundary expansion
\eqref{boundary-exp} with all derivatives.

For an interior end, write \(x'=(x_1,\ldots,x_n)\) and
\(x_{n+1}=b+v(x')\).
Here \(\lambda=F\,d\zeta\), and the same argument gives
\[
 v(x')=a|x'|^{2-n}+O_k(|x'|^{1-n}).
\]
The minimal equation then gives
\[
 \Delta v=\frac{D^2v(Dv,Dv)}{1+|Dv|^2}
               =O_k(|x'|^{2-3n}).
\]
Therefore
\[
 \int_{\mathbb R^n}|\zeta|^2|\Delta(\chi v)(\zeta)|\,d\zeta<\infty.
\]
Taylor expansion of \(|x'-\zeta|^{2-n}\) in the integral gives
\[
 v(x')=a|x'|^{2-n}+\frac{d\cdot x'}{|x'|^n}+E(x'),
 \qquad E=O(|x'|^{-n}).
\]
Interior estimates for \(\Delta E=\Delta v\), with \(x'=Ry\),
give \(|D^jE|=O(|x'|^{-n-j})\) for every \(j\ge0\).
This proves \eqref{interior-exp}. The derivative bounds for the two end graphs give
\(|\nabla^jh|=O(r^{-n-j})\) for every \(j\ge0\), completing the proof.
\end{proof}

\section{The higher-dimensional classification}\label{sec:classification-three}

Let \(M^n\), \(n\ge3\), satisfy the hypotheses of Theorem~\ref{main}.

Let \(\mathcal D^{1,2}(M)\) be the completion of \(C_c^\infty(M)\) in the
Dirichlet norm, and let \(\mathcal D^{1,2}_0(M)\) denote the
subspace of functions with zero trace on \(\partial M\).
Theorem~\ref{main} and the boundary area bound in
Section~\ref{sec:structure} give
\[
 |h|^2\in L^{\frac{n}{2}}(M),\qquad q\in L^{n-1}(\partial M).
\]
H\"older's inequality and \eqref{Sob} bound \(Q\) on
\(\mathcal D^{1,2}(M)\). Rellich's theorem and compact trace embeddings give
compact operators for bounded compactly supported coefficients.
Approximation in \(L^{\frac{n}{2}}(M)\) and \(L^{n-1}(\partial M)\) yields
\[
 Q(f,g)=\langle(I-K)f,g\rangle_{\mathcal D^{1,2}(M)},
 \qquad K\text{ compact and self-adjoint}.
\]
By density, the index on the Dirichlet space equals \(\Ind_\theta(M)\).

If \(\Ind_\theta(M)=1\), fix \(\psi\) with \(Q(\psi,\psi)<0\).
Then \(Q(f,\psi)=0\) implies \(Q(f,f)\ge0\), and polarization gives
\begin{equation}\label{index-one-orthogonality}
 Q(f,\psi)=Q(f,f)=0
 \quad\Longrightarrow\quad
 Q(f,\eta)=0\quad\text{for every }\eta\in\mathcal D^{1,2}(M).
\end{equation}
Such \(f\) satisfies \(\Delta f+|h|^2f=0\) and
\(\partial_\mu f=qf\).

\begin{prop}\label{compact-index}
Suppose that \(\partial M\) is compact and \(\Ind_\theta(M)=1\). Then
\[
 N=B=b_{n-1}(M)=1,
\]
where \(N\) and \(B\) are the numbers of ends and boundary components.
\end{prop}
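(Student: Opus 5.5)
Following Chodosh--Gianocca \cite[Sections 5--8]{CG26}, the plan is to build a space of harmonic test objects whose dimension grows with $N$, $B$ and $b_{n-1}(M)$, and to show that each of them gives a $Q$-null direction. The index-one relation \eqref{index-one-orthogonality} then forces such a direction to be a Jacobi field with the Robin condition. Finally, we rule these out using the end expansions of Theorem~\ref{main}.

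Since $\partial M$ is compact, every end is an interior end, which is horizontal by Theorem~\ref{main} and satisfies \eqref{interior-exp}.

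\textbf{Step 1: $N=1$.} Suppose $N\ge2$. For each end $E_i$, let $u_i$ be the bounded harmonic function on $M$ with $u_i\to\delta_{ij}$ on $E_j$ and $\partial_\mu u_i=0$ on $\partial M$. These exist by Theorem~\ref{main}, using finite total curvature and the Sobolev inequality \eqref{Sob} for $n\ge3$; the expansion \eqref{interior-exp} gives $|\nabla u_i|=O(r^{1-n})$. Following the Cao--Shen--Zhu/Li--Wang route used in \cite{CG26}, we form a test space from the $u_i$ multiplied by suitable functions such as $\nu_{n+1}$ or $1$. The goal is a two-dimensional subspace on which $Q\le0$, with one direction $Q$-orthogonal to $\psi$. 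By \eqref{index-one-orthogonality}, that direction solves $\Delta f+|h|^2f=0$ with $\partial_\mu f=qf$. A Jacobi field that tends to distinct constants on distinct horizontal ends would have to be a combination of $\nu_{n+1}$, the translation fields $\langle\nu,E_i\rangle$, and the dilation field $\langle X,\nu\rangle$, each of which has the asymptotics given by \eqref{interior-exp}. Comparing limits then gives a contradiction.

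An alternative for this step is monotonicity \eqref{mono} combined with the moving planes argument mentioned in the introduction, but I would first try the test-function count.

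\textbf{Step 2: $B=1$ and $b_{n-1}(M)=1$.} We use $L^2$ harmonic $(n-1)$-forms, equivalently harmonic one-forms via Hodge duality, with the absolute boundary condition. Their dimension is at least $b_{n-1}(M)$, or the relative version, which gives about $B$ independent classes coming from the boundary components together with the end. For each such form $\omega$, take $\xi=(*\omega)^\sharp$ or the corresponding vector of coefficients $\xi_j=\langle\xi,E_j\rangle$. The Bochner identity on a minimal hypersurface, together with the boundary identity $\langle\nabla_\mu\xi,\xi\rangle=q|\xi|^2$ obtained from \eqref{boundary}, should give
\[
 \sum_{j}Q(\xi_j)\le\lim_{R\to\infty}\int_M|\nabla\eta_R|^2|\xi|^2=0,
\]
where we use the decay $|\xi|=O(r^{1-n})$ and Euclidean volume growth in the cutoff limit. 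This mirrors Lemma~\ref{surface-index-bound}, except that the higher-dimensional refined Kato and Savo-type estimates \cite{Sav10,CM16} now absorb $|h|^2$.

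As in the proof of Lemma~\ref{surface-index-bound}, the linear map to $\mathbb R^{(n+1)\cdot1}$ given by pairing with $\psi$ has a large kernel. Elements of this kernel give Jacobi fields $\xi_j$ with the Robin condition. Ros's argument \cite[Lemma 1]{Ros06}, adapted to higher dimensions as in \cite{CG26}, forces $\xi$ to be a gradient of a linear function. The boundary condition then puts each boundary component in a hyperplane with $\bar\nabla_T\nu=0$, so $h(\mu,\cdot)=0$ along it. By unique continuation for the Cauchy problem this contradicts nonflatness. The resulting dimension count gives $b_{n-1}(M)\le1$ and $B\le1$. Since $\partial M\neq\varnothing$, we get $B=1$. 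A nonvanishing class, for example the boundary sphere, gives $b_{n-1}(M)\ge1$.

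\textbf{Main obstacle.} The difficult step is the higher-dimensional Bochner inequality with boundary. On the boundary, the term $q|\xi|^2$ must exactly cancel against the Robin term of $Q$. In the interior, the curvature term $\mathrm{Ric}(\xi,\xi)=-|h(\xi,\cdot)|^2$ is weaker than $|h|^2|\xi|^2$. I would handle this by working with $(n-1)$-forms and the Savo/Chodosh--Gianocca averaging over coordinate projections, which recovers the full $|h|^2$ on average. The boundary term should be verified using $\operatorname{div}_{\partial M}\mu=-q$ and $h(\mu,T)=0$ from \eqref{boundary}.
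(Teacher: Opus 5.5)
There is a genuine gap. Your Step~2 framework is the right one: relative harmonic one-forms, the boundary identity \(\tfrac12\partial_\mu|\omega|^2=q|\omega|^2\), and Savo-type averaging. But the dimension count that the proposition rests on does not close, and Step~1 is not yet an argument.

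In Step~1 you never exhibit a subspace on which \(Q\le0\). For a harmonic \(u_i\) with Neumann data, \(Q(u_i)\) still contains \(-\int_{\partial M}qu_i^2\), and \(q\) has no sign. The Cao--Shen--Zhu mechanism would need a Bochner--Kato argument for a power of \(|\nabla u|\), with boundary terms you have not controlled. Your claim that a Robin Jacobi field with different limits on different ends lies in the span of \(\nu_{n+1}\), the \(\nu_i\) and \(X\cdot\nu\) is a kernel classification that is not available on an unknown \(M\). The paper needs no separate Step~1. Truncating the ends gives a compact manifold with \(B+N\) boundary components, and the exact sequence of the pair together with duality gives \(b_{n-1}(M)\ge B+N-1\). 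Everything therefore reduces to proving \(b_{n-1}(M)\le1\).

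That bound is where your count fails.
\begin{itemize}
\item The identity \(\sum_jQ(\eta\xi_j)=\int_M|\nabla\eta|^2|\xi|^2\) uses \(|h(\xi,\cdot)|^2=\tfrac12|h|^2|\xi|^2\), which holds only for \(n=2\). So pairing into \(\mathbb R^{n+1}\) is the wrong count.
\item Savo's functions \(\langle\nu\wedge\xi,E_i\wedge E_j\rangle\) restore \(\sum Q=0\), but they impose \(\binom{n+1}{2}\) constraints per form.
\item The direction \(dx_{n+1}\) always yields Robin Jacobi fields: its functions are the horizontal translation fields \(\nu_1,\dots,\nu_n\). So that direction must be excluded, not contradicted.
\end{itemize}
From the \(L^2\) forms alone you get only \(b_{n-1}(M)\le\binom{n+1}{2}+1\), and only \(b_{n-1}(M)\le n+1\) even with the pairings described next.

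The paper adds two ideas.
\begin{enumerate}
\item The admissible pairings \(P_\Omega=I+\star(\Omega\wedge\cdot)\), with the identity \eqref{pairing} and \cite[Propositions 6.7--6.8]{CG26}, produce \(0\ne\omega\in\mathcal V\) with \(P_\Omega\mathcal T(\omega)=0\) as soon as \(\dim\mathcal V=n+1\).
\item The space \(\mathcal H_0\), of dimension \(b_{n-1}(M)\), is enlarged by \(n\) relative harmonic forms \(\omega_v\), \(v\in E_{n+1}^\perp\), asymptotic to \(b_\alpha\,d\langle v,X\rangle\) on \(E_\alpha\). These are not in \(L^2\), so your decay \(|\xi|=O(r^{1-n})\) fails for them. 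Their test functions are corrected by subtracting \(\langle E_{n+1}\wedge v,\Theta_a\rangle Z\), where \(Z=X\cdot\nu\) satisfies \(Q(Z,\cdot)=0\) and \(Q(Z,Z)=(n-1)\int_M|dx_{n+1}|^2>0\). This yields \eqref{subtraction} and forces \(v=0\).
\end{enumerate}
Now \(\dim\mathcal H=b_{n-1}(M)+n\), so \(b_{n-1}(M)\ge2\) leaves room for an \((n+1)\)-dimensional \(\mathcal V\) avoiding \(dx_{n+1}\).

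Finally, rigidity comes from the boundary, not from a Ros-type lemma. The Robin condition and \eqref{relative-boundary} give \(P_\Omega(\nu\wedge\nabla^{\partial M}f)=0\), and decomposability makes \(f\) locally constant. Unique continuation for \(\omega+\frac{f_0}{\sin\theta}dx_{n+1}\) then puts \(\omega\) in \(\operatorname{span}\{dx_{n+1}\}\), contradicting the choice of \(\mathcal V\).
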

\begin{proof}
The ends are horizontal graphs by Theorem~\ref{main}:
\[
 x_{n+1}=b_\alpha+a_\alpha r^{2-n}+O_k(r^{1-n})
 \quad\text{on }E_\alpha.
\]
The harmonic function \(x_{n+1}\) is positive and bounded; the maximum
principle gives \(b_\alpha>0\) for some \(\alpha\).

For \(G\in C_c^\infty(M)\), the Lax--Milgram theorem and \eqref{Sob} give
a unique solution
\[
 \Delta w=G,\qquad w\in\mathcal D^{1,2}_0(M),\qquad
 w=O_k(r^{2-n}).
\]
To prove the decay estimate, choose regular radii \(R_j\uparrow\infty\)
with \(\partial M\cup\operatorname{supp}G\subset B_{R_1}\), and set
\[
 \Omega_j=M\cap B_{R_j},\qquad \Gamma_j=M\cap\partial B_{R_j}.
\]
Let \(w_j\) solve the Dirichlet problem
\[
 \begin{cases}
 \Delta w_j=G & \text{in }\Omega_j,\\
 w_j=0 & \text{on }\partial M\cup\Gamma_j.
 \end{cases}
\]
For \(\phi=(1+|X|^2)^{\frac{2-n}{2}}\), we have
\[
 \Delta\phi
 =-n(n-2)(1+|X^\perp|^2)(1+|X|^2)^{-\frac{n+2}{2}}.
\]
Since \(G\) is compactly supported, choose \(C>0\), independent of \(j\),
such that \(-C\Delta\phi\ge|G|\). The maximum principle gives
\[
 |w_j|\le C\phi=C(1+|X|^2)^{\frac{2-n}{2}}\quad\text{on }\Omega_j.
\]
Extend \(w_j\) by zero to \(M\). Testing the equation with \(w_j\) and
using \eqref{Sob} gives a uniform Dirichlet bound, so a subsequence satisfies
\[
 w_j\rightharpoonup w\quad\text{in }\mathcal D^{1,2}_0(M).
\]
Uniqueness identifies the limit with the solution above. The pointwise
bound passes to \(w\), and rescaling on the ends gives the derivative estimates.

Choose a compactly supported closed relative representative of
\([\alpha]\in H_c^1(M,\partial M;\mathbb R)\). Minimizing
\(\int_M|\alpha+df|^2\) over \(f\in\mathcal D^{1,2}_0(M)\) gives
\[
 \omega=\alpha+df,\qquad d\omega=\delta\omega=0,\qquad
 \omega=O_k(r^{1-n}).
\]
If \(\omega=0\), then \(f\) is constant on each end;
\(f\in L^{\frac{2n}{n-2}}(M)\) forces these constants to vanish. Thus \(f\)
is a compactly supported relative primitive. Let \(\mathcal H_0\) denote
the space of \(L^2\) harmonic one-forms obtained by this construction,
with relative boundary condition \(\omega|_{T\partial M}=0\).
Duality gives \(\dim\mathcal H_0=b_{n-1}(M)\).
Choose \(\chi=0\) near \(\partial M\) and \(\chi=b_\alpha\) on each
\(E_\alpha\). Then \(d\chi\) is a compactly supported closed relative
one-form and \(x_{n+1}-\chi\in\mathcal D^{1,2}_0(M)\).
Since \(x_{n+1}\) is harmonic, \(d\chi+d(x_{n+1}-\chi)=dx_{n+1}\)
is its minimizing harmonic representative, so \(dx_{n+1}\in\mathcal H_0\).

For \(v\in E_{n+1}^\perp\), choose \(F_v\), depending linearly on \(v\),
vanishing near the boundary and equal to
\(b_\alpha\langle v,X\rangle\) on each end. A Dirichlet correction with
Laplacian \(-\Delta F_v\) gives relative harmonic one-forms \(\omega_v\).
Define
\[
 \mathcal H=\mathcal H_0\oplus\{\omega_v:v\in E_{n+1}^\perp\},\qquad
 \dim\mathcal H=b_{n-1}(M)+n.
\]
The sum is direct because some \(b_\alpha>0\).
For \(\omega\in\mathcal H\), let \(v\in E_{n+1}^\perp\) satisfy
\(\omega-\omega_v\in\mathcal H_0\). Then, on \(E_\alpha\),
\[
 \omega^\sharp=b_\alpha v^\top+O_k(r^{1-n}).
\]

The dilation Jacobi field \(Z=X\cdot\nu\) satisfies
\[
 \begin{gathered}
 \Delta Z+|h|^2Z=0,\qquad \partial_\mu Z=qZ,\\
 Z=\sigma_\alpha\bigl(b_\alpha+(n-1)a_\alpha r^{2-n}\bigr)
       +O_k(r^{1-n}),
 \end{gathered}
\]
where \(\sigma_\alpha\) is the orientation sign of the end. Green's formula
gives
\[
 Q(Z,Z)=(n-1)\int_M|dx_{n+1}|^2>0,\qquad
 Q(Z,f)=0\quad(f\in\mathcal D^{1,2}(M)).
\]
The integrals converge even if \(Z\notin\mathcal D^{1,2}(M)\).
For the first identity, cancel the Robin boundary terms and use
\[
 Z\partial_\eta Z=(n-1)x_{n+1}\partial_\eta x_{n+1}+O(r^{-n}).
\]
For the second, use density, \(\nabla Z,|h|Z\in L^2(M)\), and the trace
estimate.

For a constant \(\Omega\in\Lambda^{n-3}\mathbb R^{n+1}\), consider an
admissible pairing in the sense of \cite[Section 6]{CG26}:
\[
 P_\Omega\beta=\beta+\star(\Omega\wedge\beta),\qquad
 P_\Omega=\sum_a\Theta_a\otimes\Theta_a\ge0.
\]
Here \(\star\) is the Euclidean Hodge star and the \(\Theta_a\) are constant
two-forms. For decomposable \(\beta\),
\[
 \langle P_\Omega\beta,\beta\rangle=|\beta|^2.
\]
Let \(\Omega_M\) be the pullback of \(\Omega\) to \(M\).
For \(u_a=\langle\nu\wedge\omega^\sharp,\Theta_a\rangle\),
\cite[Lemma 7.1]{CG26} gives
\begin{equation}\label{pairing}
 \sum_a(|\nabla u_a|^2-|h|^2u_a^2)\,dA
 =\tfrac12\Delta|\omega|^2\,dA
  +2(-1)^{n+1}d(\Omega_M\wedge\omega\wedge h\omega).
\end{equation}
Here \((h\omega)(U)=\omega(h^\sharp U)\); the sign in
\eqref{pairing} follows from our convention for \(h\).

On \(\partial M\), write \(\omega^\sharp=f\mu\). The identities
\(d\omega=\delta\omega=0\) and \eqref{boundary} give
\begin{equation}\label{relative-boundary}
 \begin{gathered}
 \nabla_\mu\omega^\sharp=\nabla^{\partial M}f+qf\mu,\qquad
 \tfrac12\partial_\mu|\omega|^2=q|\omega|^2,\\
 (\bar\nabla_\mu-q)(\nu\wedge\omega^\sharp)
 =\nu\wedge\nabla^{\partial M}f.
 \end{gathered}
\end{equation}
Integration by parts in \eqref{pairing}, together with
\eqref{relative-boundary}, cancels the Robin boundary term.
The form \(\Omega_M\wedge\omega\wedge h\omega\) has zero pullback to
\(\partial M\), and both boundary integrals over
\(M\cap\partial B_R\) are \(O(R^{-1})\).
Thus we obtain
\[
 \sum_a Q(u_a,u_a)=0,\qquad
 \widetilde u_a=u_a-\langle E_{n+1}\wedge v,\Theta_a\rangle Z
       =O_k(r^{2-n})\in\mathcal D^{1,2}(M),
\]
and
\begin{equation}\label{subtraction}
 \sum_a Q(\widetilde u_a,\widetilde u_a)
 =-(n-1)|v|^2\int_M|dx_{n+1}|^2.
\end{equation}

Choose \(\psi\in C_c^\infty(M)\) with \(Q(\psi,\psi)<0\).
If \(Q(u_a,\psi)=0\) for all \(a\), then
\(Q(\widetilde u_a,\psi)=0\), since \(Q(Z,\psi)=0\).
Index one and \eqref{subtraction} give
\[
 0\le\sum_a Q(\widetilde u_a,\widetilde u_a)
 =-(n-1)|v|^2\int_M|dx_{n+1}|^2.
\]
Hence \(v=0\) and each \(Q(u_a,u_a)=0\). By
\eqref{index-one-orthogonality}, each \(u_a\) satisfies the Jacobi equation
and Robin boundary condition.
Equation~\eqref{relative-boundary} gives
\[
 P_\Omega(\nu\wedge\nabla^{\partial M}f)=0.
\]
Since this two-form is decomposable, \(\nabla^{\partial M}f=0\).

On a connected boundary patch, \(f=f_0\). Set
\(\tau=\omega+(\frac{f_0}{\sin\theta})dx_{n+1}\). Since \(\tau=0\) there,
its tangential derivatives vanish; \(d\tau=\delta\tau=0\) give
\(\nabla_\mu\tau=0\). Boundary and interior unique
continuation~\cite[Lemma 2]{CDGM05} yield \(\tau=0\) on \(M\), hence
\(\omega\in\operatorname{span}\{dx_{n+1}\}\).

Define
\[
 \mathcal T:\mathcal H\longrightarrow\Lambda^2\mathbb R^{n+1},\qquad
 \langle\mathcal T(\omega),\Theta\rangle
 =Q(\langle\nu\wedge\omega^\sharp,\Theta\rangle,\psi).
\]
Suppose that \(b_{n-1}(M)\ge2\). Choose \(\mathcal V\subset\mathcal H\)
with
\[
 \dim\mathcal V=n+1,\qquad
 \mathcal V\cap\operatorname{span}\{dx_{n+1}\}=\{0\}.
\]
If \(\mathcal T|_{\mathcal V}\) has nonzero kernel, take \(0\ne\omega\)
in that kernel and \(P_\Omega=\operatorname{Id}\).
Otherwise \(\dim\mathcal T(\mathcal V)=n+1\), and
\cite[Propositions 6.7--6.8]{CG26} give \(0\ne\omega\in\mathcal V\)
and an admissible \(P_\Omega\) with \(P_\Omega\mathcal T(\omega)=0\).
In both cases,
\[
 0=\langle P_\Omega\mathcal T(\omega),\mathcal T(\omega)\rangle
   =\sum_a|Q(u_a,\psi)|^2.
\]
The preceding argument gives
\(\omega\in\operatorname{span}\{dx_{n+1}\}\), a contradiction.
Thus \(b_{n-1}(M)\le1\).

Truncating the ends gives a compact manifold with \(B+N\) boundary
components. The exact sequence of the pair and duality give
\(b_{n-1}(M)\ge B+N-1\ge1\). Hence \(N=B=b_{n-1}(M)=1\).
\end{proof}

At an interior contact point, the strong maximum principle implies that
the two local graphs agree.
At a boundary contact point away from the reflecting hyperplane, choose
coordinates centered at the contact point so that the common outward
boundary normal within \(\{x_{n+1}=0\}\) is \(E_n\).
Set \(z=x_{n+1}\) and write
\[
 X_i=(x',g_i(x',z),z),\qquad i=1,2,\qquad
 W=g_2-g_1\ge0,\qquad W(0)=0.
\]
Here \(g_1\) and \(g_2\) describe the original and reflected pieces,
respectively.
The capillary boundary condition is
\begin{equation}\label{reflection-angle}
 (g_i)_z=-\cot\theta\sqrt{1+|D_{x'}g_i|^2}
 \qquad(z=0).
\end{equation}
Subtracting the minimal graph equations and linearizing the boundary
condition gives
\[
 \begin{cases}
 \operatorname{div}(A\,DW)=0,&z>0,\\
 W_z+\beta\cdot D_{x'}W=0,&z=0,
 \end{cases}
\]
where \(A\) is uniformly elliptic and, with \(g_\tau=g_1+\tau W\),
\[
 \beta=\cot\theta\int_0^1
 \frac{D_{x'}g_\tau}{\sqrt{1+|D_{x'}g_\tau|^2}}\,d\tau.
\]
Since \(W\ge0\) and \(W(0)=0\), the boundary condition yields
\[
 D_{x'}W(0)=0,\qquad
 W_z(0)=-\beta(0)\cdot D_{x'}W(0)=0.
\]
If \(W\not\equiv0\), the strong maximum principle and Hopf's lemma give
\[
 W_z(0)>0,
\]
a contradiction. Hence the two graphs agree near the contact point.

At a corner contact point, let \(g\) denote the original local graph function and
choose coordinates so that the reflecting plane is \(x_1=0\).
Its normal is tangent to \(M\) and \(\partial M\), and
\(D_{x'}g=0\) at the contact point. The graph difference
\[
 W=g(-x_1,x_2,\ldots,x_{n-1},x_{n+1})
     -g(x_1,x_2,\ldots,x_{n-1},x_{n+1})\ge0
\]
is odd in \(x_1\), with \(W=DW=0\) at the contact point. Only second
derivatives \(W_{1j}\) can be nonzero; those tangent to the support vanish
at the minimum of \(W_1\) on the intersection of the support and the
reflecting plane. Equation~\eqref{reflection-angle} gives
\(g_{x_1x_{n+1}}=0\) at the contact point. The coefficient matrix is
$
 A^{ij}=\int_0^1
 \left.\frac{\partial}{\partial p_j}
       \frac{p_i}{\sqrt{1+|p|^2}}\right|_{p=Dg+tDW}\,dt.
$
Thus
\[
 D^2W=0,\qquad \partial_i(A^{ij}\partial_jW)=0,\qquad
 A^{1j}=0\quad(x_1=0, j\ne1).
\]
The coefficients have bounded first derivatives. Serrin's corner lemma
\cite[Section 4, Lemma 2]{Serrin71} implies, for an inward direction \(\eta\),
\[
 W\not\equiv0\quad\Longrightarrow\quad
 (D_\eta W,D^2_{\eta\eta}W)(0)\ne(0,0),
\]
contrary to \(DW(0)=D^2W(0)=0\). Hence \(W\equiv0\).

\begin{proof}[Proof of Theorem~\ref{index-one}, compact boundary]
Proposition~\ref{compact-index} gives one end and one boundary component.
The maximum principle, the divergence theorem for \(x_{n+1}\), and
\(\partial_\mu x_{n+1}=-\sin\theta\) give
\[
 \begin{gathered}
 0<x_{n+1}<b,\qquad x_{n+1}=b-ar^{2-n}+O_k(r^{1-n}),\\
 (n-2)|S^{n-1}|a=\sin\theta\,|\partial M|>0.
 \end{gathered}
\]
After a unique horizontal translation, \(d=0\) in \eqref{interior-exp}.
Write \(u=b-ar^{2-n}+H\), \(H=O_k(r^{-n})\).

Let \(D\subset\{x_{n+1}=0\}\) be the bounded region enclosed by \(\partial M\).
On the graphical end, the change of coordinates
\[
 (x',t)\longmapsto(x',t-u(x')+b)
\]
maps the end to the plane \(t=b\). Extend this change smoothly to equal
the identity near \(\bar D\). Adding the point at infinity then gives a
closed embedded hypersurface in \(\mathbb S^{n+1}\); the separation theorem
therefore shows that \(M\cup\bar D\) separates \(\mathbb R^{n+1}\).
For sufficiently large
\(|x'|\), the line \(\{(x',t):t\in\mathbb R\}\) crosses the end once,
so the component \(\Omega\) containing
\(\{x_{n+1}>b\}\) satisfies
\[
 \Omega\subset\{x_{n+1}>0\},\qquad
 \Omega=\{x_{n+1}>u(x')\},\qquad x'=(x_1,\ldots,x_n),
 \quad\text{outside a cylinder}.
\]

Write \(\widehat x=(x_2,\ldots,x_n)\), and let
\(\mathcal R_t(x_1,\widehat x,x_{n+1})=(2t-x_1,\widehat x,x_{n+1})\)
denote reflection across \(x_1=t\). Reflect
\(\Omega\cap\{x_1>\lambda\}\) by \(\mathcal R_\lambda\).
For fixed \(\lambda_0>0\), the
centered expansion gives, uniformly at infinity for \(\lambda\ge\lambda_0\),
\[
 u(\lambda+\ell,\widehat x)>u(\lambda-\ell,\widehat x),\qquad
 u_1(\lambda,\widehat x)>0.
\]
Indeed, with \(r_\pm=|(\lambda\pm\ell,\widehat x)|\),
\[
 \begin{gathered}
 r_+^2-r_-^2=4\lambda\ell,\\
 a(r_-^{2-n}-r_+^{2-n})\ge
 \begin{cases}
 a(1-2^{2-n})r_-^{2-n},&r_+\ge2r_-,\\
 \gamma_na\lambda\ell r_-^{-n},&r_+<2r_-.
 \end{cases}
 \end{gathered}
\]
For \(r_-\) large, \(H\) satisfies
\[
 \frac{|H(\lambda+\ell,\widehat x)-H(\lambda-\ell,\widehat x)|}
      {a(r_-^{2-n}-r_+^{2-n})}
 \le C\left(r_-^{-2}+\frac1{\lambda_0r_-}\right)\longrightarrow0.
\]
If the reflected point lies in \(\{|x'|\le R_0\}\), then use
\[
 \sup_{M\cap\{|x'|\le R_0\}}x_{n+1}\le b-\eta,\qquad
 u(x')\longrightarrow b.
\]
Thus reflection starts for large \(\lambda\).

Define
\[
 \lambda_*=\inf\bigl\{s\ge0:
 \mathcal R_t(\Omega\cap\{x_1>t\})\subset\Omega
 \text{ for every }t>s\bigr\}.
\]
The preceding estimates ensure that this set is nonempty.
Suppose that \(\lambda_*>0\). The end estimates exclude contact
at infinity, so a first contact occurs at some \(q\in M\).
For the local graph difference \(W=g_{\mathrm{ref}}-g\ge0\),
with \(W(q)=0\), the interior, boundary, and corner comparisons above
give \(W\equiv0\).
At an interior point on the reflecting plane, orthogonality gives
\(\partial_{\mathrm{in}}W(q)=0\), and Hopf's lemma gives
the same conclusion.
Thus the two pieces agree near \(q\). Analytic continuation and properness
give reflection symmetry of \(M\) across \(x_1=\lambda_*\).
Uniqueness of the translation making \(d=0\) in
\eqref{interior-exp} then forces \(\lambda_*=0\).
Applying the argument from both sides in every horizontal direction, we
conclude that \(M\) is invariant under \(O(n)\).

The rotational classification~\cite[Section 2]{TZ09} places the nonflat
\(M\) in an \(n\)-dimensional catenoid; completeness and properness make
\(M\) a truncation of it.
\end{proof}

\begin{proof}[Proof of Theorem~\ref{index-one} for noncompact boundary]
For \(\theta=\frac{\pi}{2}\),
\[
 u:=\nu_{n+1}\in\mathcal D^{1,2}_0(M),\qquad
 u\not\equiv0,\qquad
 (\Delta+|h|^2)u=0.
\]
If \(Q(\phi,\phi)<0\) for some
\(\phi\in\mathcal D^{1,2}_0(M)\), then
\eqref{index-one-orthogonality} and unique continuation
for \(u\) extended by zero across \(\partial M\) \cite{Aro57} give
\[
 Q(u,u)=Q(u,\phi)=0
 \ \Longrightarrow\
 u=\partial_\mu u=0\quad\text{on }\partial M
 \ \Longrightarrow\ u\equiv0,
\]
a contradiction. Hence \(\operatorname{Ind}_D(M)=0\).
The reflected hypersurface has index
\(\Ind_{\frac{\pi}{2}}(M)+\operatorname{Ind}_D(M)=1\)
and is a catenoid by \cite[Theorem 1.1]{CG26}.
Since \(\partial M\) is noncompact, the plane \(\{x_{n+1}=0\}\)
contains its rotation line. Thus the normalized hypersurface is \(C^+\).

For fixed \(n\), set
\[
 I_n:=\{\kappa=-\cot\theta\ge0:
       (n,\theta)\text{ satisfies }\eqref{stable-cone-range}\}.
\]
For \(\kappa>0\), Theorem~\ref{main} and
Proposition~\ref{two-ends} give the normalization and symmetries
\[
 b=1,\qquad
 (x',x_n)\mapsto(Ox',\pm x_n),\quad O\in O(n-1),
\]
with end planes \(x_n=\pm(1+\kappa x_{n+1})\).
Let \(\mathscr M_1\) be the space of these normalized index-one
hypersurfaces with \(\kappa\in I_n\), including \(C^+\) at \(\kappa=0\).
Use weighted graph convergence after subtracting the affine end functions. Consider
\[
 \pi:\mathscr M_1\longrightarrow I_n,\qquad \pi(M)=\kappa.
\]
By Propositions~\ref{angle-chart} and~\ref{properness}, \(\pi\) is a proper
local homeomorphism. Its image is open and closed in \(I_n\) and contains
zero. Thus \(\pi\) is a finite covering of \(I_n\) with constant fiber
size. Since \(\pi^{-1}(0)=\{C^+\}\),
\[
 \#\pi^{-1}(\kappa)=1\qquad(\kappa\in I_n).
\]
The constructed models have index one by Proposition~\ref{hairpin-existence}
and therefore belong to this space.
Thus every hypersurface in the classification is one of these models,
up to dilations and rigid motions.
\end{proof}

\appendix
\section{Higher-dimensional capillary hairpins}
\label{sec:hairpin-construction}\label{sec:index-calculations}

In this appendix, let \(M^n\subset\mathbb R^{n+1}_+\), \(n\ge3\), be a
connected, complete, two-sided, properly embedded capillary minimal
hypersurface with noncompact boundary and finitely many ends. Assume that
each end is a single graph satisfying
\eqref{interior-exp} or \eqref{boundary-exp}, with the corresponding
curvature estimates. We use the compact-operator representation of \(Q\)
and the reflection arguments from Section~\ref{sec:classification-three}.

\begin{prop}\label{hairpin-existence}
For every \(n\ge3\) and \(\theta\in(0,\pi)\), there exists a connected,
complete, two-sided, properly embedded capillary minimal hypersurface
\(M^n_\theta\subset\mathbb R^{n+1}_+\) with noncompact boundary, exactly
two regular boundary ends, finite total curvature, extrinsic Euclidean
volume growth, and \(\Ind_\theta(M_\theta)=1\).
After fixing the center and scale, we obtain a smooth family
\(M_\kappa\), \(\kappa=|\cot\theta|\in[0,\infty)\),
with \(M_0=C^+\).
\end{prop}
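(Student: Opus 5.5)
We construct the family by continuation in the parameter \(\kappa=|\cot\theta|\), starting from the half-catenoid \(C^+\) at \(\kappa=0\). By the reflection \(x_n\mapsto -x_n\) we may take \(\cos\theta\le0\). At \(\kappa=0\), the half of the \(n\)-dimensional catenoid cut by a plane containing its axis meets \(\{x_{n+1}=0\}\) orthogonally. Its capillary index is one by the reflection argument used in Section~\ref{sec:classification-three}: the Tam--Zhou index of the full catenoid splits into the Neumann and Dirichlet indices. We impose the symmetries \((x',x_n)\mapsto(Ox',\pm x_n)\) with \(O\in O(n-1)\). We write nearby hypersurfaces as normal graphs over a \(\kappa\)-dependent approximate model \(\widetilde M_\kappa\). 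This model is obtained by bending the two boundary ends of \(C^+\) so that they are asymptotic to the capillary half-planes \(x_n=\pm(1+\kappa x_{n+1})\), with cutoffs far out. The unknown graph function lies in a weighted H\"older space \(C^{2,\alpha}_{\delta}\) of functions decaying like \(\rho^{\delta}\) with \(2-n<\delta<0\) on the ends. The scale and center are fixed by the normalization \(b=1\) in \eqref{boundary-exp} and by the symmetries.

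The nonlinear map \((\kappa,\phi)\mapsto(H,\ \text{angle defect})\) of the graph of \(\phi\) over \(\widetilde M_\kappa\) is smooth. At \((0,0)\) its linearization is the Jacobi operator \(\Delta+|h|^2\) with the Robin condition \(\partial_\mu-q\); at \(\theta=\pi/2\) this is the Neumann condition, since \(q=0\). Invertibility of the linearization on symmetric weighted spaces is the first key step. It is Fredholm because the ends are asymptotically flat half-spaces with the Neumann condition, and the even reflection across \(x_1=0\) reduces the end analysis to the Laplacian on an exterior domain in \(\mathbb R^n\); the weight \(\delta\) avoids the indicial roots \(0\) and \(2-n\). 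Its kernel consists of decaying Jacobi fields on \(C^+\), which by the even reflection are Jacobi fields on the catenoid. These are spanned by translations, rotations, and dilation, and the only decaying ones are the horizontal translations and rotations moving the axis. The imposed symmetries kill all of them except possibly the translation along the axis. That translation is excluded by parity in \(x_n\), and the dilation is excluded by the normalization, where it appears as a one-dimensional deficiency absorbed by adjusting the parameter \(b\). Surjectivity then follows by duality with weight \(2-n-\delta\). The implicit function theorem gives a smooth local family \(M_\kappa\), \(\kappa\in[0,\kappa_0)\).

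To reach every \(\kappa\in[0,\infty)\), we run a continuity argument. The set of \(\kappa\) for which a symmetric normalized solution exists is open by the same IFT argument at any \(M_\kappa\). For this we need the linearized operator at \(M_\kappa\) to have no symmetric decaying kernel. We argue this by showing that the index stays equal to one along the family, using Proposition~\ref{angle-chart}, and that there are no symmetric Jacobi fields, via the moving-plane symmetry of Proposition~\ref{two-ends}. Closedness comes from uniform estimates, Proposition~\ref{properness}: curvature bounds \(|h|\le C\) independent of \(\kappa\) on compact \(\kappa\)-intervals, uniform weighted control of the ends from the expansion \eqref{boundary-exp}, and non-degeneration of the neck. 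The neck stays bounded because \(b=1\) and monotonicity \eqref{mono} bounds the area ratio by that of two half-planes. Embeddedness persists along the family by the maximum principle, since the ends are separated at infinity. The area bounds give extrinsic Euclidean volume growth, and finite total curvature follows from the end decay.

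Finally, the index is one for each \(\kappa\). The index is at least one because the dilation field \(Z=X\cdot\nu\) yields a negative direction after a cutoff, as in the computation \(Q(Z,Z)\) of Proposition~\ref{compact-index}, adapted so that \(Z\) tends to nonzero constants on the ends. The index is at most one because the index is lower semicontinuous in \(\kappa\) and can only jump when a decaying \(L^2\)-kernel appears; the compact operator representation \(Q=\langle (I-K)\cdot,\cdot\rangle\) makes the negative spectrum vary continuously. The main obstacle is this kernel-exclusion along the whole path, especially the nonsymmetric Jacobi fields, which are not ruled out by the symmetric IFT. I would handle them by decomposing into \(O(n-1)\)-spherical harmonics and \(x_n\)-parity, and in each mode comparing with the explicit Jacobi fields \(\nu_i\) coming from translations, which have a sign on half of the surface, in order to show that the lowest Robin eigenvalue in each nontrivial mode stays nonnegative.
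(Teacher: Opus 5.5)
Your architecture is the paper's: invert the Robin Jacobi operator of \(C^+\) on \(O(n-1)\)-invariant, \(x_n\)-even weighted spaces, apply the implicit function theorem, and continue in \(\kappa\). Openness comes from local charts and closedness from Proposition~\ref{properness}, with the index kept equal to one. The gap is in the step that makes the continuation run: ruling out a symmetric (\(O(n-1)\)-invariant, \(x_n\)-even) Jacobi field in \(\mathcal D^{1,2}\) at an arbitrary index-one \(M_\kappa\) with \(\kappa>0\). The moving-plane argument of Proposition~\ref{two-ends} yields symmetries of \(M\), not information about its Jacobi fields, so it cannot do this.

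You also have the difficulty reversed. The nonsymmetric modes are the easy part and need no index hypothesis. The positive fields \(\nu_n\) on \(\{x_n>0\}\) and \(-\nu_r\) give \eqref{odd-jacobi-identity} and \eqref{angular}. Hence \(Q\ge0\) on odd functions and on every mode \(k\ge1\), with kernel exactly \(\operatorname{span}\{\nu_1,\ldots,\nu_{n-1}\}\). Every negative direction and every potential new kernel element therefore lies in the symmetric mode, which is the very class your implicit function theorem uses. No positive Jacobi field is available for comparison there, because that mode contains the negative direction \(-\nu_r\).

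The paper closes this with Proposition~\ref{kernel}, which uses index one essentially:
\begin{enumerate}
\item a symmetric kernel element \(f\) has at most two nodal domains;
\item on the ends \(f=A_0\rho^{2-n}+O_k(\rho^{1-n})\), and Green's identity against the dilation field \(Z\to b\sin\theta\ne0\) forces \(A_0=0\);
\item the barrier of Proposition~\ref{two-ends} then forces sign changes on each end;
\item analyticity gives \(f\not\equiv0\) on \(\{r=0\}\);
\item the maximum principle for \(f/(-\nu_{n+1})\) makes each nodal domain reach \(\partial M\);
\item a separation argument in \(\Sigma_M\cong\mathbb R\times[0,1]\) gives a contradiction.
\end{enumerate}
Proposition~\ref{angle-chart}, which you cite for index preservation, gets its charts at \(\kappa_0>0\) precisely from Proposition~\ref{kernel}. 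Without this input, a symmetric Jacobi field could appear at some \(\kappa_*\) (a fold). There the implicit function theorem fails and the index can change. So neither openness nor index preservation is established, and hence neither is the index-one hypothesis needed to apply Proposition~\ref{properness}.

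Your lower bound on the index is also wrong as stated. The dilation field is a Robin Jacobi field, so \(Q(\eta Z,\eta Z)=\int_M Z^2|\nabla\eta|^2\ge0\) for every cutoff \(\eta\). Indeed, Proposition~\ref{compact-index} computes \(Q(Z,Z)=(n-1)\int_M|dx_{n+1}|^2>0\). So \(Z\) gives positive, not negative, directions. The negative direction is \(a=-\nu_r\), with \(Q(a,a)<0\) by \eqref{radial-negative}. Alternatively, \(\lambda_1(K_\kappa)>1\) persists from \(C^+\) under norm continuity of \(K_\kappa\).

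Smaller points:
\begin{itemize}
\item The area bound from \eqref{mono} does not control the neck scale relative to \(b\); the two-sided bounds on \(b\) and \(m\) come from Lemma~\ref{uniform-estimates}.
\item Rotations moving the catenoid's axis grow linearly, so they are not decaying Jacobi fields.
\item For \(2-n<\delta<0\) the symmetric linearized operator is an isomorphism, so there is no deficiency for \(b\) to absorb; the bounded field \(Z\) simply lies outside the weighted space.
\end{itemize}
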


The construction starts from \(C^+\), half of the \(n\)-dimensional
catenoid cut by a hyperplane containing its rotation line.
For \(n\ge3\) and \(\theta\ne\frac{\pi}{2}\), set \(\kappa=|\cot\theta|\) and normalize the
two ends to be asymptotic to \(x_n=\pm(1+\kappa x_{n+1})\).
Writing \(x'=(x_1,\ldots,x_{n-1})\), the normalized hypersurface
\(M_\theta\) has the parametrization
\begin{equation}\label{hairpin-graph}
 X_\theta(x',x_n)=\bigl(x',x_n,v_\theta(x',x_n)\bigr),\qquad
 (x',x_n)\in\overline{\Omega_\theta},
\end{equation}
where
\begin{equation}\label{hairpin-domain}
 \Omega_\theta
 =\{(x',x_n):|x_n|<1,\ |x'|<R_\theta(x_n)\}
  \cup\{(x',x_n):|x_n|\ge1\}.
\end{equation}
The function \(R_\theta:(-1,1)\to(0,\infty)\) satisfies
\[
 R_\theta(-t)=R_\theta(t),\qquad
 R_\theta'(t)>0\quad(0<t<1),\qquad
 \lim_{t\to\pm1}R_\theta(t)=\infty.
\]
The graph function satisfies
\begin{equation}\label{hairpin-graph-equation}
 \begin{gathered}
 \begin{cases}
 \displaystyle\operatorname{div}
       \frac{Dv_\theta}{\sqrt{1+|Dv_\theta|^2}}=0,
 \quad v_\theta>0&\text{in }\Omega_\theta,\\[2mm]
 v_\theta=0,\quad |Dv_\theta|=|\tan\theta|
 &\text{on }\partial\Omega_\theta,
 \end{cases}\\[2mm]
 v_\theta(Ox',x_n)=v_\theta(x',x_n)=v_\theta(x',-x_n)
 \quad(O\in O(n-1)).
 \end{gathered}
\end{equation}

\begin{prop}\label{two-ends}
Assume also that \(M\) is nonflat and
\(\theta\ne\frac{\pi}{2}\).
Choose the normal so that \(\cos\theta<0\), and set
\(\kappa=-\cot\theta\).
After a rigid motion preserving \(\{x_{n+1}=0\}\),
the following hold.

\smallskip\noindent\textup{(i)}
\(M\) has exactly two ends, both boundary ends, and is symmetric
under \(O(n-1)\) acting on \(x'\) and \(x_n\mapsto-x_n\).
The ends \(x_n=u_\pm(x',x_{n+1})\) satisfy
\begin{equation}\label{main-expansion}
 \begin{gathered}
 u_+=b+\kappa x_{n+1}-m\rho^{2-n}
 +
 \begin{cases}
  O_k(\rho^{-3}\log\rho), & n=3,\\
  O_k(\rho^{-n}), & n\ge4,
 \end{cases}\\
 u_-=-u_+,\qquad
 \rho^2=|x'|^2+\frac{x_{n+1}^2}{\sin^2\theta},
 \qquad b,m>0.
 \end{gathered}
\end{equation}
Moreover,
\[
 |x_n|<b+\kappa x_{n+1},\qquad
 x_n\nu_n>0\ (x_n\ne0),\qquad
 \nu_r<0\ (r=|x'|>0).
\]

\smallskip\noindent\textup{(ii)}
\(M\) is the graph \(x_{n+1}=v(x',x_n)\) satisfying
\eqref{hairpin-graph-equation} with \(v_\theta,\Omega_\theta\)
replaced by \(v,\{v>0\}\), and
\(|Dv|<|\tan\theta|\) in \(\{v>0\}\).
For \(S=M\cap\{x_n=0\}\), we have
\(S\cong\overline D^{\,n-1}\) and
\(M\cong\mathbb R\times\overline D^{\,n-1}\).
\end{prop}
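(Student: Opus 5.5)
The proof rests on three ingredients. Monotonicity and the end expansions give the count and position of the ends. Moving planes gives the symmetries. A second moving-plane argument, sliding parallel to the capillary planes, gives the graph property and the topology.

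\textbf{Step 1: the ends.} By Theorem~\ref{main}, interior ends are horizontal graphs. Since \(\partial M\) is noncompact, some boundary end exists. An interior end at height \(b_\alpha\) would lie above a boundary end, which is asymptotic to a tilted half-plane \(x_n=\kappa x_{n+1}+b\). Since \(\kappa>0\), that half-plane eventually crosses every horizontal level \(x_{n+1}=b_\alpha\), and this would contradict embeddedness. Hence all ends are boundary ends. Each contributes density \(\tfrac12(1-\cos\theta)\) to the capillary area ratio in \eqref{mono}. With \(\cos\theta<0\), embeddedness forces the limiting capillary half-planes to be disjoint, hence parallel in the \(x_n\)-direction, with normals \(\pm\)-oriented. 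Two disjoint nested boundary ends with the same orientation cannot both bound \(\Omega\) compatibly. So the boundary ends alternate orientation, and an ordering argument along \(x_n\), together with connectedness of \(\Omega\), gives exactly two ends. These are asymptotic to \(x_n=b_\pm\pm\kappa x_{n+1}\); after translating in \(x_n\) we get \(b_+=-b_-=b\). One end cannot occur, since then \(M\) would be a half-plane by monotonicity. The sign \(m>0\) of the \(\rho^{2-n}\) coefficient comes from a flux computation: integrate \(\nu_n\), or equivalently the divergence of \(E_n^\top\), over \(M\cap\{|x_n|<t\}\) using \eqref{boundary-exp}. The boundary flux through \(\partial M\) has a sign, as in the compact case where \((n-2)|S^{n-1}|a=\sin\theta|\partial M|\).

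\textbf{Step 2: symmetries.} I would run Alexandrov reflection as in the compact-boundary proof. For a direction \(e\in\operatorname{span}\{E_1,\dots,E_{n-1}\}\), reflect across \(\{x\cdot e=\lambda\}\); these planes are vertical and preserve the half-space and the contact angle. The expansions \eqref{boundary-exp}, after centering \(d=0\) by a translation in \(x'\), start the reflection at infinity exactly as the \(r_\pm\) estimate did. The interior, boundary and corner comparison lemmas exclude a first contact at \(\lambda_*>0\). This gives \(O(n-1)\) symmetry. Reflection in \(x_n\) uses the planes \(x_n=\lambda\), starting from \(\lambda\) large, where the \(+\) end lies to the left of its reflected image. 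The monotone reflection argument then gives symmetry about \(x_n=0\), which is consistent with \(b_+=-b_-\), and yields \(x_n\nu_n>0\). Strict monotonicity in the radial variable gives \(\nu_r<0\). The inequality \(|x_n|<b+\kappa x_{n+1}\) follows by comparing \(M\) with the translated capillary half-planes \(x_n=c+\kappa x_{n+1}\), \(c\downarrow b\). These are stationary with the same contact angle, and the boundary maximum principle from the comparison lemmas applies. The main obstacle is the boundary/corner contact at infinity for \(x_n\)-reflections. There the two ends have opposite tilts, so the expansion must be used with care. I would handle it through the \(m\rho^{2-n}\) term with \(m>0\).

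\textbf{Step 3: graph structure and topology.} To show \(M\) is a graph over \(\{x_{n+1}=0\}\), I would slide \(M\) vertically: compare \(M\) with \(M+tE_{n+1}\), \(t>0\). Disjointness for large \(t\) follows from the asymptotics, since the tilted ends shift apart, and a first contact is excluded by the maximum principle. This gives \(\nu_{n+1}\neq0\) in the interior, so \(M\) is the graph of \(v\) over \(\{v>0\}\). It satisfies the minimal equation, with \(v=0\) and \(|Dv|=|\tan\theta|\) on the boundary by the angle condition. For the gradient bound: \(|Dv|^2\) is subharmonic in the appropriate metric. Equivalently, \(\nu_{n+1}\) is a Jacobi field with boundary value \(\cos\theta\), tending to \(\cos\theta\) on the ends. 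The strong maximum principle with Hopf's lemma then gives \(|\nu_{n+1}|>|\cos\theta|\) inside, i.e.\ \(|Dv|<|\tan\theta|\). Finally, \(x_n\nu_n>0\) implies that \(x_n\) has no critical points except on \(S=M\cap\{x_n=0\}\), and it is a submersion elsewhere. Flowing along \(\nabla x_n/|\nabla x_n|^2\), which is tangent to \(\partial M\) and proper, gives \(M\cong\mathbb R\times S\). Finally \(S\) is an \(O(n-1)\)-invariant compact graph region with \(\nu_r<0\), hence a star-shaped radial domain, so \(S\cong\overline D^{\,n-1}\).
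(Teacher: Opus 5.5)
\textbf{Gap 1: the sign \(m>0\).} Your flux argument does not produce it. The compact-boundary identity \((n-2)|S^{n-1}|a=\sin\theta|\partial M|\) works only because \(\partial M\) is compact. Here the flux of \(\nabla x_n\) or \(\nabla x_{n+1}\) through \(\partial M\cap B_R\) is of order \(R^{n-1}\), and it cancels against the leading flux through \(M\cap\partial B_R\). Since \(x_n\) is odd, you must also cut along \(\{x_n=0\}\).

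What survives is a balancing identity at the neck. The paper derives it only later, in Lemma~\ref{uniform-estimates}, with the neck radius normalized to one: \(\tfrac{(n-2)|S^{n-1}|}{2}\sin^2\theta\,m=|S|+\omega_{n-1}\cos\theta\). The contact term has the wrong sign because \(\cos\theta<0\). Controlling it requires knowing that \(S\) is a graph over the unit ball. In your ordering that comes from Step~3, after Step~2 has already used \(m>0\).

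It is also worth seeing where \(m>0\) is genuinely needed.
\begin{itemize}
\item It is not needed in the \(x_n\)-reflection. For \(\lambda>\lambda_0=\tfrac{b_++b_-}{2}\), the reflected upper end stays at \(x_n\)-distance at least \(2(\lambda-\lambda_0)-o(1)\) above the lower end, so first contact occurs in a compact set.
\item It is needed in the \(x'\)-reflections. There it gives \(u_+(\lambda+\ell,\widehat x)>u_+(\lambda-\ell,\widehat x)\) at infinity, exactly as \(a>0\) did in the compact case. This also forces you to reflect \(\{x_{n+1}>0\}\setminus\overline\Omega\) rather than \(\Omega\).
\end{itemize}

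The missing idea is a Hopf argument at infinity. By your wedge estimate, \(V=b+\kappa x_{n+1}-u_+>0\). Here \(V\) solves \(LV=0\) and \(B_0V=0\), where \(L\) and \(B_0\) are \(O(\rho^{1-n})\)-perturbations of \(\Delta\) and \(\partial_\xi\), with \(\xi=x_{n+1}/\sin\theta\). The function \(H=\rho^{2-n}+\rho^{1-n}+\xi\rho^{-n}\) satisfies \(LH>0\) and \(B_0H>0\). The maximum principle then gives \(V\ge\epsilon H\), hence \(m>0\).

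Also perform the \(x_n\)-reflection before centering. A single \(x'\)-translation removes the \(d\cdot x'/\rho^n\) terms on both ends only once \(u_-=-u_+\).

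\textbf{Gap 2: non-tangential touching at boundary points.} Your comparison arguments overlook this case.

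For the half-plane comparison it is harmless, once you show \(\bar\nu=E_n\) at the touching point. That point maximizes \(x_n|_{\partial M}\), and the paper gets \(\bar\nu=E_n\) by noting that \(\{x_{n+1}=0,\ x_n>x_n(p)\}\) lies outside \(\overline\Omega\).

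In the vertical sliding of Step~3, however, it is fatal. The first contact can occur at a point of \(\partial M+t_*E_{n+1}\) lying in the interior of \(M\), with the translate meeting \(M\) transversally along its boundary edge. No maximum principle applies there. This configuration is exactly a vertical line through a point of \(\partial M\) that meets \(M\) again, i.e.\ the failure of \(\nu_{n+1}<0\) you are trying to exclude.

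The paper gets \(\nu_{n+1}<0\) from a different idea. The rotation Jacobi field \(\psi=x_{n+1}\nu_1-x_1\nu_{n+1}\) equals \(-\cos\theta\,x_1>0\) on \(\partial M\cap\{x_1>0\}\). Stability of \(M\cap\{x_1>0\}\), which comes from the positive Jacobi field \(-\nu_1\), forces \(\psi\ge0\) there. Hence \(\nu_{n+1}\le x_{n+1}\nu_r/r<0\). Injectivity of the vertical projection then follows because successive crossings of a vertical line with \(\partial\Omega\) have opposite normal signs.

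\textbf{Smaller point: the flow in the topology step.} \(\nabla^Mx_n\) is not tangent to \(\partial M\), since \(\partial_\mu x_n=\cos\theta\,\bar\nu_n\neq0\) in general. Its flow therefore does not give \(M\cong\mathbb R\times S\). You need either a boundary-adapted vector field or the paper's radial description of \(M\cap\{x_n\ge0\}\) as a graph over the exterior of the region beneath the neck.
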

\begin{proof}
Let \(\Omega\) be the region bounded by \(M\), with outward unit
normal \(\nu\).
A boundary end \(x_n=u(x',x_{n+1})=\kappa x_{n+1}+b+o(1)\)
and an interior end \(x_{n+1}=f(x',x_n)=b_0+o(1)\) would intersect:
for large \(|x'|\),
\[
 x_{n+1}-f(x',u(x',x_{n+1}))
\]
has opposite signs at \(x_{n+1}=0\) and \(x_{n+1}=b_0+1\).
Two boundary ends with nonparallel horizontal normals would also
intersect, since their limiting half-hyperplanes meet transversely
in \(x_{n+1}>0\).
Thus all ends are boundary graphs, which we order as
\[
 u_1<\cdots<u_N,\qquad
 u_i=\sigma_i\kappa x_{n+1}+b_i+o(1),\qquad
 \sigma_i\in\{-1,1\}.
\]
Along a line directed by \(E_n\), entering \(\Omega\) gives
\(\nu_n<0\), while leaving gives \(\nu_n>0\).
Hence \(\sigma_{i+1}=-\sigma_i\).
The ordering for large \(x_{n+1}\) gives
\(\sigma_i\le\sigma_{i+1}\), so \(N\le2\).
Since \eqref{mono} excludes \(N=1\) for nonflat \(M\),
\[
 N=2,\qquad
 u_\pm=\pm\kappa x_{n+1}+b_\pm+o(1),\qquad b_-\le b_+.
\]

The region \(\Omega\) lies between the two ends. Set
\[
 L_+=\sin\theta(x_n-b_+)+\cos\theta\,x_{n+1},\qquad
 L_-=-\sin\theta(x_n-b_-)+\cos\theta\,x_{n+1}.
\]
Since \(\Delta_M L_\pm=0\) and \(\limsup_\infty L_\pm\le0\),
a positive maximum of \(L_+\) occurs at \(p\in\partial M\), where
\(x_n(p)=\max_{\partial M}x_n\).
The half-space \(\{x_{n+1}=0,\ x_n>x_n(p)\}\) is connected
and disjoint from \(\partial M\); the end expansions show
that it lies outside \(\overline\Omega\). Hence
\[
 \bar\nu(p)=E_n,\qquad
 \partial_\mu L_+(p)
 =\sin\theta\cos\theta-\cos\theta\sin\theta=0,
\]
contradicting Hopf's lemma. Applying the same argument to \(L_-\) and
to a zero maximum gives
\begin{equation}\label{wedge}
 b_--\kappa x_{n+1}<x_n<b_++\kappa x_{n+1},\qquad b_-<b_+.
\end{equation}

Reflect \(\Omega\) across \(x_n=\lambda\), starting from above, and let
\(\lambda_0=\frac{b_++b_-}{2}\). Estimate~\eqref{wedge} allows reflection to begin on the ends. For each fixed \(\epsilon>0\), the following estimates hold uniformly at infinity:
\[
 u_-<\lambda_0,\qquad u_-+u_+<2\lambda_0+\epsilon,\qquad
 \nu_n>\frac{\sin\theta}{2}\quad\text{on the upper end}.
\]
Thus, for \(\lambda>\lambda_0\), first contact occurs in a compact set.
The comparison principle would give symmetry, which requires
\(2\lambda-b_+=b_-\), a contradiction. Reflection from both sides
therefore gives symmetry at \(\lambda_0\). For every \(\lambda>\lambda_0\), reflection in \(x_n=\lambda\)
maps \(\Omega\cap\{x_n>\lambda\}\) into \(\Omega\).
Together with symmetry about \(x_n=\lambda_0\), this shows
that each nonempty intersection of \(\Omega\) with a line
parallel to \(E_n\) is an interval.
They also give the strict signs of \(\nu_n\). Translate in \(x_n\) so that
\[
 \lambda_0=0,\qquad b_+=-b_-=b>0.
\]

In the coordinate \(\xi=\frac{x_{n+1}}{\sin\theta}\), the positive function
\(V=b+\kappa x_{n+1}-u_+\) satisfies
\[
 LV=0,\quad L=\Delta+O_k(\rho^{1-n})D^2,\qquad
 B_0V=0,\quad B_0=\partial_\xi+O_k(\rho^{1-n})D_{x'}.
\]
For \(H=\rho^{2-n}+\rho^{1-n}+\xi\rho^{-n}\),
\[
 LH=(n-1)\rho^{-n-1}+O(\rho^{-n-2})>0,\qquad
 B_0H=\rho^{-n}+O(\rho^{-n-1})>0.
\]
Choose \(\epsilon>0\) with \(V\ge\epsilon H\) on an inner hemisphere.
Since \(L(V-\epsilon H)<0\), \(B_0(V-\epsilon H)<0\), and
\(V-\epsilon H\to0\) at infinity, the maximum principle and Hopf's
lemma give \(V\ge\epsilon H\), hence \(m>0\).
A unique translation in the \(x'\)-variables eliminates the terms
\(\frac{d\cdot x'}{\rho^n}\) on both ends, giving \eqref{main-expansion}.

Set \(\mathcal E=\{x_{n+1}>0\}\setminus\overline\Omega\).
Repeat the moving-plane argument from the compact-boundary proof of
Theorem~\ref{index-one}, reflecting \(\mathcal E\) across \(x_1=\lambda\).
For \(\lambda\ge\lambda_0>0\), \eqref{main-expansion} gives the same
end comparisons, with error ratio \(O(\frac{\log\rho}{\lambda_0\rho})\)
for comparable radii; \eqref{wedge} gives separation when the reflected
base point lies in a compact set. These estimates start reflection and
exclude contact at infinity.

As in the compact-boundary proof of Theorem~\ref{index-one}, the comparison
principle and uniqueness of the centered expansion give \(\lambda_*=0\).
Reflection from both sides in every \(x'\)-direction gives
\(O(n-1)\)-invariance and \(-\nu_r>0\).

For fixed \((x_n,x_{n+1})\) with \(x_{n+1}>0\) and \(e\in S^{n-2}\),
reflection across \(e\cdot x'=\frac{r_1+r_2}{2}\) gives
\[
 \begin{gathered}
 (r_2e,x_n,x_{n+1})\in\mathcal E
 \quad\Longrightarrow\quad (r_1e,x_n,x_{n+1})\in\mathcal E,\quad 0\le r_1<r_2.
 \end{gathered}
\]
Thus each nonempty section of \(\mathcal E\) at fixed
\((x_n,x_{n+1})\) is a ball centered at \(0\) or all of \(\mathbb R^{n-1}\).

On \(D=M\cap\{x_1>0\}\), the positive Jacobi field
\(-\nu_1\) implies stability for functions vanishing on
\(M\cap\{x_1=0\}\). The rotation field
\(\psi=x_{n+1}\nu_1-x_1\nu_{n+1}\) satisfies
\[
 \Delta\psi+|h|^2\psi=0,\qquad
 \psi=-\cos\theta\,x_1>0\quad\text{on }\partial M\cap D,\qquad
 \psi=0\quad(x_1=0).
\]
On the ends, uniformly up to \(r=0\),
\[
 \frac{\psi}{x_1}=-\cos\theta+O(\rho^{1-n})>0,\qquad
 -\frac{\nu_r}{r}=O(\rho^{-n}).
\]
The negative part \(\psi_-\) has compact support and zero trace on both
boundary pieces. Testing gives \(Q(\psi_-,\psi_-)=0\), so stability makes
it a weak Jacobi field on \(D\). Every unbounded component meets an end;
the maximum principle for \(x_1\) makes every bounded component meet
\(\partial M\) where \(x_1>0\). Hence \(\psi_-\) vanishes on an open subset
of each component, and unique continuation gives \(\psi_-=0\).

By \(O(n-1)\)-invariance, \(\psi\ge0\) gives, for \(r>0\),
\[
 \nu_{n+1}\le \frac{x_{n+1}\nu_r}{r}<0,\qquad
 \Delta(\nu_{n+1}-\cos\theta)=-|h|^2\nu_{n+1}\ge0.
\]
The maximum principle gives \(\nu_{n+1}<0\) at \(r=0\) and, since \(M\) is
nonflat, \(\nu_{n+1}<\cos\theta\) in the interior.
Write \(\Pi_i(X)=X-x_iE_i\), \(1\le i\le n+1\).
Since \(M\) bounds \(\Omega\), successive intersections with a vertical
line would have opposite normal signs. Thus \(\Pi_{n+1}|_M\) is injective;
the end expansion makes it proper. The graph satisfies
\[
 |Dv|^2=\nu_{n+1}^{-2}-1<\tan^2\theta.
\]

On \(S=M\cap\{x_n=0\}\), symmetry gives \(\nu_n=0\). Thus zero is a
regular value of both \(x_n|_M\) and \(x_n|_{\partial M}\). Outside a compact set, the ends satisfy
\(|x_n|\ge \frac{b}{2}\); hence \(S\) is compact. At its boundary, \(\mu\in TS\) and
\(\mu_{n+1}=-\sin\theta\ne0\).

Since \(\Pi_{n+1}|_S\) is injective and \(S\) is rotationally symmetric, each
component is the graph of a radial function \(x_{n+1}=v(r)\), with
\[
 \begin{gathered}
 v>0\quad\text{in the interior},\qquad
 v=0\quad\text{on the boundary},\\
 v'(r)=\frac{\nu_r}{|\nu_{n+1}|}<0\quad(r>0).
 \end{gathered}
\]
Since \(S\) is compact, a component without boundary would have
a point where \(r\) is maximal. Its local graph representation
allows \(r\) to increase near that point, a contradiction.
If a component had an inner boundary at \(r=a>0\), then \(v(a)=0\).
At an interior point with \(r>a\),
\[
 v(r)=v(a)+\int_a^r v'(s)\,ds<0,
\]
contrary to \(v>0\).
Thus each component projects onto a ball centered at \(0\).
Two such components would both project to \(0\), contradicting
the injectivity of \(\Pi_{n+1}\). Finally, choose \(p_\pm\) on the two ends with
\(x_n(p_+)>0>x_n(p_-)\).
A path in \(M\) joining these points meets \(S\), so
\(S\ne\varnothing\).

Consequently, for some \(r_0>0\),
\[
 \begin{gathered}
 S=\{(x',0,v(|x'|)):|x'|\le r_0\},\qquad v(r_0)=0,\\
 K=\{(x',x_{n+1}):|x'|\le r_0,\ 0\le x_{n+1}\le v(|x'|)\}.
 \end{gathered}
\]
Set \(M_+=M\cap\{x_n\ge0\}\). By (i), \(\Pi_n\) identifies \(M_+\)
with \(\overline{(\mathbb R^{n-1}\times[0,\infty))\setminus K}\).
Since \(v'<0\), each ray from the origin meets \(\Pi_n(S)\) once.
Radial coordinates give
\[
 (M_+,S)\cong
 \bigl([0,\infty)\times\overline D^{n-1},
       \{0\}\times\overline D^{n-1}\bigr).
\]
Joining \(M_+\) and its reflected half along \(S\) gives
\(M\cong\mathbb R\times\overline D^{n-1}\) and
\(\partial M\cong\mathbb R\times S^{n-2}\).
Radial coordinates also give
\[
 \Sigma_M:=M\cap\{x_2=\cdots=x_{n-1}=0,\ x_1\ge0\}
 \cong\mathbb R\times[0,1],
\]
with boundary components at \(r=0\) and \(x_{n+1}=0\).

Hopf's lemma applied to \(\nu_{n+1}-\cos\theta\) gives
\[
 \partial_\mu\nu_{n+1}=-\frac{\sin^2\theta\,q}{\cos\theta}>0,\qquad q>0,\qquad
 H_{\partial M\subset\partial\mathbb R^{n+1}_+}=-\frac{q}{\cos\theta}>0.
\]
The map \(\Pi_{n+1}|_M\) has multiplicity one. The interval sections of
\(\Omega\) bound the multiplicity of \(\Pi_n|_M\) by two.
The radial ball sections of \(\{x_{n+1}>0\}\setminus\overline\Omega\) give
the same bound for \(\Pi_i|_M\),
\(1\le i\le n-1\).
Since \(1\le\sum_{i=1}^{n+1}|\nu_i|\), the area formula yields
\begin{equation}\label{universal-area}
 \Vol(M\cap B_R(p))\le(2n+1)\omega_nR^n,
 \qquad p\in\mathbb R^{n+1},\ R>0.
\end{equation}
\end{proof}

For the following identities, assume that \(M\) has two regular boundary
ends and is invariant under \(O(n-1)\) and reflection across \(x_n=0\),
with \(\nu_n>0\) when \(x_n>0\) and \(-\nu_r>0\) when \(r>0\).
No index assumption is needed.

On \(M_+=M\cap\{x_n>0\}\), the positive field \(\nu_n\) satisfies
\[
 \Delta\nu_n+|h|^2\nu_n=0,\qquad \partial_\mu\nu_n=q\nu_n,\qquad
 \nu_n=0\quad\text{on }S.
\]
For \(g\in C_c^\infty(M)\) odd in \(x_n\), integration by parts gives
\begin{equation}\label{odd-jacobi-identity}
 Q(g,g)=2\int_{M_+}\nu_n^2|\nabla(\frac{g}{\nu_n})|^2\,dA\ge0.
\end{equation}
Set \(a=-\nu_r>0\).
For \(f\in C_c^\infty(\Sigma_M\cap\{r>0\})\),
extend \(f\) by \(O(n-1)\)-symmetry to \(M\cap\{r>0\}\).
The next two identities use the induced metric on \(\Sigma_M\).
For a real spherical harmonic \(Y_k\) with
\(\|Y_k\|_{L^2(S^{n-2})}=1\) and
\(-\Delta_{S^{n-2}}Y_k=\lambda_kY_k\), \(\lambda_k=k(k+n-3)\),
write \(Q_k(f)=Q(fY_k,fY_k)\). Integration by parts gives
\begin{equation}\label{angular}
 Q_k(f)=\int_{\Sigma_M}
 r^{n-2}\left[
 a^2|\nabla(\frac{f}{a})|^2
 +(\lambda_k-n+2)\frac{f^2}{r^2}
 \right]dA,\qquad k\ge1.
\end{equation}
Indeed, separation of variables gives
\[
 Q_k(f)=\int_{\Sigma_M}
 r^{n-2}\bigl(|\nabla f|^2+(\lambda_kr^{-2}-|h|^2)f^2\bigr)\,dA
 -\int_{\partial M\cap\Sigma_M}
 r^{n-2}qf^2\,ds.
\]
The equations for the translation fields \(aY_1\) give \eqref{angular}.
If \(fY_k\) is smooth on \(M\), then \(f=O(r^k)\) near \(r=0\).
The Dirichlet energy of \(fY_k\) in \(r<2\epsilon\) is
\(O(\epsilon^{n+2k-3})\) on compact sets.
Thus \eqref{angular} extends across \(r=0\).

Extend \(a=-\nu_r\) by zero at \(r=0\). The translation equations give
\[
 \Delta a+|h|^2a=\frac{(n-2)a}{r^2},\qquad \partial_\mu a=qa.
\]
Near compact subsets of \(\{r=0\}\) and on the regular ends, respectively,
\[
 a=O(r),\quad |Da|=O(1),\qquad
 a=O(\rho^{1-n}),\quad |Da|=O(\rho^{-n}).
\]
Hence \(a\in\mathcal D^{1,2}(M)\). Integration outside \(r<\epsilon\),
followed by approximation at infinity, gives
\begin{equation}\label{radial-negative}
 Q(a,a)=-(n-2)\int_M\frac{a^2}{r^2}\,dA<0.
\end{equation}
The boundary term at \(r=\epsilon\) is \(O(\epsilon^{n-1})\) on compact sets, and the end estimates
make the outer errors vanish. Choose \(f_j\in C_c^\infty(M)\) with
\(f_j\to a\) in \(\mathcal D^{1,2}(M)\). Continuity of \(Q\) gives
\[
 Q(f_j,f_j)\longrightarrow Q(a,a)<0.
\]
Hence \(Q(f_j,f_j)<0\) for all sufficiently large \(j\).

\begin{prop}\label{kernel}
Under the hypotheses of Proposition~\ref{two-ends}, if \(\Ind_\theta(M)=1\), then
\[
 \ker_{\mathcal D^{1,2}(M)}(\Delta+|h|^2,\partial_\mu-q)
 =\operatorname{span}\{\nu_1,\ldots,\nu_{n-1}\}.
\]
\end{prop}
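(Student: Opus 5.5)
The plan is to prove both inclusions, and for the hard one to decompose a kernel element by the symmetries of Proposition~\ref{two-ends}: oddness or evenness in \(x_n\), and spherical harmonics in \(x'\).

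\emph{Easy inclusion.} Horizontal translations \(E_i\), \(1\le i\le n-1\), preserve \(\{x_{n+1}=0\}\) and the contact angle. Hence \(\nu_i\) satisfies \((\Delta+|h|^2)\nu_i=0\) and \(\partial_\mu\nu_i=q\nu_i\). On the ends, \(\nu_i=O(\rho^{1-n})\) and \(|D\nu_i|=O(\rho^{-n})\) by \eqref{main-expansion}, so \(\nu_i\in\mathcal D^{1,2}(M)\). For the reverse inclusion, take \(f\) in the kernel. Then \(Q(f,g)=0\) for all \(g\in\mathcal D^{1,2}(M)\), by density and the compact-operator representation. The reflection \(x_n\mapsto-x_n\) and \(O(n-1)\) preserve \(Q\), so both the odd/even projections and the \(Y_k\)-components \(f_k\) of \(f\) are again kernel elements in \(\mathcal D^{1,2}(M)\). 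It therefore suffices to treat each sector separately.

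\emph{Sectors that are easy.} If \(f\) is odd in \(x_n\), then \eqref{odd-jacobi-identity}, extended to \(\mathcal D^{1,2}\) by approximation, gives \(0=Q(f,f)=2\int_{M_+}\nu_n^2|\nabla(f/\nu_n)|^2\). Thus \(f=c\nu_n\) on \(M_+\). Since \(\nu_n\to\sin\theta\ne0\) on the upper end while \(f\in L^{2n/(n-2)}(M)\) by \eqref{Sob}, we get \(c=0\). Next consider \(f\) with angular component of degree \(k\ge2\). Then \eqref{angular} gives \(0=Q_k(f)\ge(\lambda_k-n+2)\int r^{n-2}f^2/r^2\). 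Since \(\lambda_k\ge\lambda_2=2(n-1)>n-2\), this forces \(f=0\). For \(k=1\), \eqref{angular} gives \(a^2|\nabla(f/a)|^2=0\), so \(f=ca\) on \(\Sigma_M\). Hence \(fY_1\) is a combination of the translation fields \(aY_1\), i.e. it lies in \(\operatorname{span}\{\nu_1,\dots,\nu_{n-1}\}\). Using \eqref{angular} on \(\mathcal D^{1,2}\) functions needs the cutoff near \(r=0\) and at infinity already discussed after \eqref{angular}.

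\emph{The \(O(n-1)\)-invariant sector; this is where index one enters and the main obstacle.} Let \(f\ne0\) be a rotationally invariant kernel element. First I show \(f\) cannot change sign. Otherwise \(f_\pm=\max(\pm f,0)\in\mathcal D^{1,2}(M)\) are both nonzero, and testing the equation gives \(Q(f_\pm,f_\pm)=0\) and \(Q(f_+,f_-)=0\). So \(Q\) vanishes on the two-dimensional span of \(f_+,f_-\). Choose \(0\ne g=c_+f_++c_-f_-\) in this span with \(Q(g,\psi)=0\), where \(\psi\) is a fixed negative direction. Then \eqref{index-one-orthogonality} makes \(g\) a Jacobi field with the Robin condition. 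But \(g\) vanishes on an open set (if \(c_+\ne0\) it vanishes on an open region where \(f_+=0\), e.g. near a point with \(f<0\); otherwise \(g=c_-f_-\) vanishes where \(f>0\)). Interior unique continuation then forces \(g\equiv0\), a contradiction. Hence we may assume \(f\ge0\), \(f\not\equiv0\), and then \(f>0\) in the interior by the strong maximum principle. Now pair \(f\) with \(a=-\nu_r\ge0\). This uses the equation \(\Delta a+|h|^2a=(n-2)a/r^2\), \(\partial_\mu a=qa\), together with the integration by parts justified before \eqref{radial-negative} (the boundary terms at \(r=\epsilon\) and at infinity vanish). It gives
\[
 0=Q(f,a)=-(n-2)\int_M\frac{fa}{r^2}\,dA<0,
\]
a contradiction. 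So \(f=0\) in this sector.

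The delicate points are the justifications: that truncations \(f_\pm\) and the pairing with \(a\) are legitimate in \(\mathcal D^{1,2}(M)\), given the singular weight \(r^{-2}\) near the axis and the decay on the ends, and the unique continuation step. The sign argument itself uses \eqref{index-one-orthogonality} exactly as in Section~\ref{sec:classification-three}.
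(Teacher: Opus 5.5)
\paragraph{Verdict: genuine gap in the invariant sector.}
Your odd sector and your angular sectors \(k\ge1\) match the paper. Your pairing with \(a=-\nu_r\) does correctly exclude a one-signed invariant kernel element, and for that subcase it is simpler than the paper's route. The gap is the step claiming that an \(O(n-1)\)-invariant kernel element \(f\) cannot change sign.

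\paragraph{Why the sign-change step fails.}
Since \(f\) lies in the kernel, \(Q(f,\psi)=0\) automatically, so \(Q(f_+,\psi)=Q(f_-,\psi)\). Your constraint \(Q(c_+f_++c_-f_-,\psi)=0\) therefore reads \((c_++c_-)\,Q(f_+,\psi)=0\).
\begin{itemize}
\item If \(Q(f_+,\psi)\ne0\), the only admissible \(g\) is a multiple of \(f_+-f_-=f\) itself. This \(g\) vanishes only on the nodal set, not on an open set.
\item Your parenthetical ``if \(c_+\ne0\) it vanishes where \(f_+=0\)'' is false when \(c_-\ne0\) as well, because then \(g=c_-f_-\ne0\) where \(f<0\).
\item The case \(Q(f_+,\psi)\ne0\) really occurs. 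Take \(\psi=a\), approximated as after \eqref{radial-negative}. Your own computation gives \(Q(f_+,a)=-(n-2)\int_M f_+a/r^2<0\).
\end{itemize}
So the index-one nodal argument only shows that \(f\) has at most two nodal domains. With three domains \(D_i\), you can additionally impose a vanishing coefficient, and then unique continuation applies. Pairing with \(a\) only yields \(\int_M fa/r^2=0\), which is consistent with two nodal domains.

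\paragraph{What the paper uses to exclude exactly two nodal domains.}
This case is the real content of the paper's proof, and it needs global input your proposal lacks:
\begin{enumerate}
\item the end expansion \(f=A_0\rho^{2-n}+O_k(\rho^{1-n})\), from the Newtonian-potential argument;
\item Green's identity against the dilation field \(Z\), whose constant term \(b\sin\theta\ne0\) forces \(A_0=0\);
\item the barrier \(H\) of Proposition~\ref{two-ends}, which shows that \(f=o(\rho^{2-n})\) must change sign near infinity on each end;
\item analyticity along the axis \(\{r=0\}\), which shows \(f\not\equiv0\) there;
\item the maximum principle for \(f/(-\nu_{n+1})\), which shows that each nodal domain meets \(\partial M\);
\item a separation argument in \(\Sigma_M\cong\mathbb R\times[0,1]\): a curve in \(\{f>0\}\) from \(\{r=0\}\) to \(\partial M\) separates the two ends, yet the connected set \(\{f<0\}\) must join them.
\end{enumerate}
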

\begin{proof}
Let \(F\) belong to this kernel. For any such field \(G\),
choose a smooth radial function \(\chi_R=1\) on \(B_R\) and
\(\chi_R=0\) outside \(B_{2R}\), with \(|d\chi_R|\le \frac{C}{R}\).
By \eqref{Sob} and \eqref{universal-area},
\[
 Q(\chi_RG,\chi_RG)=\int_M G^2|d\chi_R|^2
 \le C\|G\|_{L^{\frac{2n}{n-2}}(M\setminus B_R)}^2\longrightarrow0.
\]
Let \(F_{\mathrm o}\) be the part of \(F\) odd in \(x_n\).
Applying \eqref{odd-jacobi-identity} to \(\chi_RF_{\mathrm o}\) and
using Fatou's lemma gives \(\nabla(\frac{F_{\mathrm o}}{\nu_n})=0\) on
\(M\cap\{x_n>0\}\).
Thus \(F_{\mathrm o}=c\nu_n\). Since \(\nu_n\to\pm\sin\theta\) on the ends,
\(\nu_n\notin L^{\frac{2n}{n-2}}(M)\), so \eqref{Sob} gives \(c=0\).

Choose an orthonormal basis \(Y_{k,\ell}\) of real degree-\(k\)
spherical harmonics on \(S^{n-2}\) and write
\(F=\sum_{k,\ell}f_{k,\ell}Y_{k,\ell}\).
Each term is a Robin Jacobi field in \(\mathcal D^{1,2}(M)\).
Applying \eqref{angular} to the cutoffs for \(k\ge1\) and using Fatou's lemma gives
\[
 f_{k,\ell}=0\quad(k\ge2),\qquad
 f_{1,\ell}=c_\ell(-\nu_r).
\]
The \(k=1\) terms therefore lie in
\(\operatorname{span}\{\nu_1,\ldots,\nu_{n-1}\}\), whose elements have
finite energy by the end estimates. Subtracting them leaves an
\(O(n-1)\)-invariant field \(f\), even in \(x_n\). Suppose that \(f\ne0\).

Apply the index-one orthogonality argument of
Section~\ref{sec:classification-three} to the restrictions of \(f\)
to the connected components of \(\{f\ne0\}\), extended by zero.
Unique continuation \cite{Aro57} then gives at most two such components.

On each end \(E\), write
\[
 A_{s,t}=\{p\in E:s<\rho(p)<t\},\qquad \gamma=\frac{n-2}{2}.
\]
Sobolev and annular Robin estimates give
\[
 \sup_{A_{R,2R}}|D^kf|
 \le C_kR^{-k-\gamma}\|f\|_{L^{\frac{2n}{n-2}}(A_{\frac{R}{2},4R})}
 =o(R^{-k-\gamma}).
\]
In the coordinate \(\xi=\frac{x_{n+1}}{\sin\theta}\), the height variation
\(v=\frac{f}{\nu_n}\) satisfies \(Lv=0\) and \(B_0v=0\), where
\begin{equation}\label{jacobi-end-operators}
 \begin{aligned}
 L&=\Delta+O_k(\rho^{1-n})D^2+O_k(\rho^{-n})D,\\
 B_0&=\partial_\xi+O_k(\rho^{1-n})D_{x'}.
 \end{aligned}
\end{equation}
Thus \(\Delta v=o(\rho^{-n-\gamma-1})\) and
\(v_\xi=o(\rho^{-n-\gamma})\) on \(\xi=0\).
For \(v\), define \(\lambda\) as in \eqref{reflected-end-source}.
The preceding bounds give
\(\int_{\mathbb R^n}(1+|\zeta|)\,d|\lambda|(\zeta)<\infty\), since \(\gamma>0\).
The Newtonian potential argument of Section~\ref{sec:structure}, rescaled
estimates, and evenness in \(x_n\) give, on both ends,
\[
 f=A_0\rho^{2-n}+O_k(\rho^{1-n}).
\]
The dilation Jacobi field satisfies
\[
 Z=X\cdot\nu
 =b\sin\theta-(n-1)m\sin\theta\,\rho^{2-n}+O_1(\rho^{1-n}).
\]
Green's identity, with cancellation of the capillary boundary terms, yields
\[
 \begin{aligned}
 0&=\lim_{R\to\infty}\sum_\pm\int_{\rho=R}
       (Z\partial_\eta f-f\partial_\eta Z)\\
  &=(2-n)|S^{n-1}|b\sin\theta\,A_0.
 \end{aligned}
\]
Thus \(A_0=0\), and \(f=O_k(\rho^{1-n})=o(\rho^{2-n})\).

If \(v\) had one sign near infinity on an end, the maximum principle
would give \(v>0\) after a sign change. The barrier argument in
Proposition~\ref{two-ends} gives
\[
 v\ge\epsilon H\ge\epsilon\rho^{2-n},
\]
contradicting
\(v=o(\rho^{2-n})\). Thus \(f\) takes both signs outside every compact subset
of each end.

Near a point with \(r=0\) and \(x_n>0\), we have \(\nu_n>0\), and analyticity gives
\(v(r,x_{n+1})=\sum_{j\ge0}a_j(x_{n+1})r^{2j}\). In the \(r^{2j}\) equation,
\(a_{j+1}\) has coefficient \(2(j+1)(2j+n-1)\) times the positive transverse
principal coefficient; all other terms involve \(a_0,\ldots,a_j\) and
their derivatives. Hence
\[
 a_0=\cdots=a_j=0\quad\Longrightarrow\quad a_{j+1}=0.
\]
If \(f\) vanished on \(\{r=0\}\), then \(a_0=0\), and induction would give
\(a_j=0\) for every \(j\ge0\). Analyticity and unique continuation would
then give \(f=0\), a contradiction.

Since \(-\nu_{n+1}>0\), with limit \(-\cos\theta>0\) on both ends,
\[
 \operatorname{div}_M\!\left(\nu_{n+1}^2\nabla\frac{f}{-\nu_{n+1}}\right)=0,
 \qquad \frac{f}{-\nu_{n+1}}\longrightarrow0.
\]
On each connected component \(D\) of \(\{f\ne0\}\), the maximum principle gives
\[
 \max_{\overline D}\frac{|f|}{-\nu_{n+1}}
 =\max_{\overline D\cap\partial M}\frac{|f|}{-\nu_{n+1}}>0.
\]
Since both signs occur and there are at most two components,
\(\{f>0\}\) and \(\{f<0\}\) are connected and each meets \(\partial M\).

Choose the sign of \(f\) so that \(\{f>0\}\) meets \(\{r=0\}\).
By Proposition~\ref{two-ends}, \(\Sigma_M\cong\mathbb R\times[0,1]\).
The continuous map
\((x',x_n,x_{n+1})\mapsto(|x'|,0,\ldots,0,x_n,x_{n+1})\)
preserves \(f\), so \(\Sigma_M\cap\{f>0\}\) and
\(\Sigma_M\cap\{f<0\}\) are connected.
A compact simple curve in \(\Sigma_M\cap\{f>0\}\)
joins \(r=0\) to \(\partial M\) and separates the two ends.
Since \(\Sigma_M\cap\{f<0\}\) meets both ends, it must cross this curve,
a contradiction.
\end{proof}

Normalize the radius of \(\partial M\cap\{x_n=0\}\) to one and choose
\(\cos\theta<0\). Fix \(\frac{\pi}{2}<\theta_0<\theta_1<\pi\).
The constants in Lemma~\ref{uniform-estimates}
are uniform for \(\theta\in[\theta_0,\theta_1]\), in every dimension
\(n\ge3\). When \(3\le n\le6\), we may also take \(\theta_0=\frac{\pi}{2}\):
the estimates then hold for \(\theta\in(\frac{\pi}{2},\theta_1]\) and include
the limit \(\theta\downarrow\frac{\pi}{2}\).
The upper half is the graph \(x_n=u(r,x_{n+1})>0\)
over the exterior of the compact region \(K\) bounded by \(S\) and the support.
Write \(S\) as \(x_{n+1}=v(r)\), \(0\le r\le1\), with
\[
 v(1)=0,\qquad v'<0\ (0<r<1),\qquad T=v(0).
\]

\begin{lemm}\label{uniform-estimates}
There are uniform positive constants such that
\[
 \begin{gathered}
 r|h|\le C\quad(r>0),\qquad 0<T_-\le T\le1,\\
 b_0\le b\le b_1,\qquad m_0\le m\le m_1.
 \end{gathered}
\]
Curvature and all its derivatives are uniformly bounded on compact
subsets of the ambient space. For \(\rho\) beyond a uniform radius,
the upper end satisfies
\begin{equation}\label{uniform-decay}
 0<V=b+\kappa x_{n+1}-u\le C\rho^{2-n},\qquad
 |D^jV|\le C_j\rho^{2-n-j}.
\end{equation}
\end{lemm}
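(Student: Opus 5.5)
The plan is a compactness-and-contradiction argument. Its two key inputs are the universal area bound \eqref{universal-area} and the geometric structure of Proposition~\ref{two-ends}. Throughout, the normalization is fixed: the radius of $\partial M\cap\{x_n=0\}$ is one and $\cos\theta<0$, with $\theta\in[\theta_0,\theta_1]$ (or $(\frac{\pi}{2},\theta_1]$ when $3\le n\le6$).

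\textbf{Step 1: the scale-invariant bound $r|h|\le C$.} I would argue by contradiction. Suppose there are hypersurfaces $M_j$ and points $p_j$ with $r(p_j)|h(p_j)|\to\infty$. A point-picking argument (rescaling by $|h(p_j)|$ around a near-maximal point of $d|h|$, where $d$ is the distance to the axis $\{r=0\}$) produces limits with $|h|\le 2$, $|h|=1$ at the origin, and area growth controlled by \eqref{universal-area}. The $O(n-1)$-symmetry then does the work. Orbits through the rescaled points have diameter comparable to $r|h|\to\infty$, so the limit is invariant under an $(n-2)$-dimensional family of translations. It therefore splits as a product of a $2$-dimensional minimal (possibly capillary) surface with $\mathbb R^{n-2}$. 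The $2$-dimensional factor inherits a graph property from $|Dv|<|\tan\theta|$, and the sign conditions $\nu_n>0$ and $\nu_r<0$ also pass to the limit. These force the factor to be flat (a stable graph, resp.\ a capillary half-plane by \eqref{mono}), contradicting $|h(0)|=1$.

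\textbf{Step 2: bounds on $T$, $b$, $m$.} The bound $T\le1$ should follow from $v'<0$ together with $|Dv|<|\tan\theta|$ and the radius normalization. The lower bounds $T\ge T_-$ and $b\ge b_0$, and the upper bound $b\le b_1$, come from compactness: $r|h|\le C$ and local elliptic estimates give smooth subconvergence away from the axis. Near the axis, the analyticity and regularity from Step 1, combined with the curvature bound on compact sets, extend the convergence. Degeneration ($T\to0$, $b\to0$ or $b\to\infty$) would produce a limit that is a flat piece or a single end. That contradicts the unit boundary radius together with the monotonicity identity \eqref{mono}.

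\textbf{Step 3: $m$ and the decay \eqref{uniform-decay}.} For $m$, I would use the flux identity obtained by integrating $\Delta x_n=0$ over $M_+$. This expresses $m|S^{n-1}|(n-2)$ (up to a $\theta$-factor) as the flux through $S$ plus boundary terms, all of which are bounded above and below by Step 2. For \eqref{uniform-decay}, I would redo the Newtonian-potential argument of Theorem~\ref{main} with constants tracked. First, $\rho|h|\le C$ is upgraded to $\rho|h|\to0$ uniformly, again by compactness. Then the contraction \eqref{contraction} gives $|h|\le C\rho^{-1-\alpha}$ with uniform constants. After that, the potential representation, together with the barrier $H$ from Proposition~\ref{two-ends} for positivity, yields $0<V\le C\rho^{2-n}$ and derivative bounds via rescaled Schauder estimates.

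\textbf{Main obstacle.} I expect the hardest step to be the uniformity near $\theta=\frac{\pi}{2}$ and at large $\rho$, because the decay rate has to be uniform across the whole family. Compactness fails there unless the blow-down limits are known to be flat. For $3\le n\le6$ I would supply this with the Wang--Zhang stable-cone classification, as in Theorem~\ref{main}. For $\theta\in[\theta_0,\theta_1]$ I would instead use the graph bound $|Dv|<|\tan\theta|$, which gives a uniform gradient bound, so that the minimal graph equation is uniformly elliptic.
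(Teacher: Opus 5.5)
Your proposal has genuine gaps, both in the curvature estimates and in the control of the asymptotic constants.

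\textbf{Curvature estimates.} In Step 1 you reach the splitting $M_\infty=\Sigma^2\times\mathbb R^{n-2}$ as the paper does. The flatness of $\Sigma$, however, is not justified when the limit has boundary on $H_*$.
\begin{itemize}
\item \eqref{mono} does not classify such surfaces.
\item The slope bound $|Dv|<|\tan\theta|$ is not uniform as $\theta\downarrow\frac{\pi}{2}$.
\item Even a uniform slope bound is not enough: the two-dimensional hairpin \eqref{hairpin-param} with $c>0$ satisfies $\nu_3\ge c$ everywhere, yet is not flat.
\end{itemize}
The sign of $\nu_r$ is the right ingredient, but you must turn it into stability of the limit. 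The paper normalizes $-\nu_{1,j}$ at the base point and uses interior and boundary Harnack estimates to get a positive Robin Jacobi field on $M_\infty$. The stable Bernstein theorem, in its capillary form \cite[Theorem 1.5]{HS23}, \cite[Theorem 2.10]{LZZ25}, then gives flatness.

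Separately, $r|h|\le C$ says nothing on the axis, and invoking ``the curvature bound on compact sets'' there is circular. The paper treats $\{r<\frac12\}$ separately:
\begin{itemize}
\item For $\theta_0>\frac{\pi}{2}$, the inclusion $\partial\{v>0\}\subset\{r\ge1\}$ forces $\{r<1\}\times\mathbb R\subset\{v>0\}$. Interior estimates for graphs with $|Dv|\le\kappa_0^{-1}$ then apply.
\item For $\theta_0=\frac{\pi}{2}$ and $n\le6$, the positive Jacobi field $-\nu_{n+1}$ gives stability away from $\partial M$, and \cite[Theorem 2]{Bel25} gives the bound.
\end{itemize}

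\textbf{The constants $T$, $b$ and the decay.} In Step 2, $v'<0$ and $|v'|<|\tan\theta|$ only give $T<|\tan\theta|$. This exceeds $1$ for $\frac{\pi}{2}<\theta<\frac{3\pi}{4}$ and is unbounded as $\theta\downarrow\frac{\pi}{2}$. The paper instead uses the rotation field $\psi=x_{n+1}\nu_1-x_1\nu_{n+1}\ge0$ from Proposition~\ref{two-ends}. On $S$ this reads $r+vv'\ge0$, hence $r^2+v^2\le1$, $T\le1$ and $K\subset B_1$. It also gives $|S|\le n\omega_{n-1}$, which your flux identity for $m$ needs.

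More seriously, $b$ and $V$ are determined at infinity, so local smooth compactness cannot bound $b$ from above: the ends of $M_j$ can keep drifting outside every compact set. Your Step 3 remedy (a uniform $\rho|h|\to0$ by blow-down, followed by \eqref{contraction}) is also unavailable, for three reasons.
\begin{itemize}
\item Rescalings of a sequence need not converge to cones.
\item The stable-cone classification only covers \eqref{stable-cone-range}.
\item The lemma is claimed for every $n\ge3$ when $\theta_0>\frac{\pi}{2}$.
\end{itemize}

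The missing mechanism is direct.
\begin{enumerate}
\item The boundary curve $x_n=g(r)$ has positive mean curvature. Hence $p=g'/\sqrt{1+g'^2}\le r^{2-n}$ and $g\le\operatorname{arcosh}r$.
\item The flux of $G=\frac{Du}{\sqrt{1+|Du|^2}}+\cos\theta\,E_{n+1}$ through $\{r=R\}$ gives \eqref{cylinder}.
\item The boundary Harnack inequality for $a=-\nu_r$ turns \eqref{cylinder} into $p(R)\le CmR^{1-n}$. Therefore $b\le\operatorname{arcosh}2+Cm$ and $0<b-g(R)\le CmR^{2-n}$.
\item Finally, the monotonicity $V_r<0$, $V_{x_{n+1}}\le0$, a Harnack chain in $\{r<x_{n+1}\}$, and Schauder estimates for the divergence-form equation satisfied by $V$ give \eqref{uniform-decay}.
\end{enumerate}
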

\begin{proof}
Choose coordinates in the \(x'\)-variables so that
\(x'(p)=r(p)E_1\). As in the proof of
Proposition~\ref{two-ends}(ii), the positive field \(-\nu_1\) gives
stability on \(M\cap B_{\frac{r(p)}{4}}(p)\).

Suppose that \(r|h|\) is not uniformly bounded. Choose \(M_j,p_j\) with
\(r_j|h(p_j)|\to\infty\), \(r_j=r(p_j)\), and
\(\theta_j\to\theta_*\in[\theta_0,\theta_1]\). Maximize
\[
 (\frac{r_j}{4}-|x-p_j|)|h(x)|
\]
on the compact set \(M_j\cap\overline B_{\frac{r_j}{4}}(p_j)\), at \(p_j^*\).
Writing \(d_j=\frac{r_j}{4}-|p_j^*-p_j|\), \(\lambda_j=|h(p_j^*)|\), we obtain
\[
 \lambda_jd_j\to\infty,\quad
 |h|\le2\lambda_j\quad\text{on }B_{\frac{d_j}{2}}(p_j^*),\quad
 r(p_j^*)\ge\frac{3r_j}{4}.
\]
The rescaled hypersurfaces \(M_j'=\lambda_j(M_j-p_j^*)\) are stable and satisfy
\[
 |h|(0)=1,\qquad |h|\le2,\qquad
 \Vol(M_j'\cap B_R)\le(2n+1)\omega_nR^n
\]
on balls whose radii tend to infinity. Also \(\lambda_jr(p_j^*)\to\infty\).

In local parametrizations, the curvature, area, and boundary estimates
give a complete, two-sided minimal immersion \(M_\infty\)
and a subsequence with
\[
 M_j'\longrightarrow M_\infty
 \quad\text{smoothly up to the boundary},\qquad
 |h_\infty|(0)=1.
\]
The rescaled boundaries satisfy
\[
 \partial M_j'\subset H_j
 :=\{x_{n+1}=-\lambda_jx_{n+1}(p_j^*)\}.
\]
After passing to a subsequence, either
\(\operatorname{dist}(0,H_j)\to\infty\) or \(H_j\to H_*\).

Choose coordinates so that \(x'(p_j^*)=r(p_j^*)E_1\), and set
\(f_j=\frac{-\nu_{1,j}}{-\nu_{1,j}(0)}\).
Interior and boundary Harnack estimates \cite{Chicco05} give
\[
 \begin{gathered}
 f_j\longrightarrow f_\infty>0,\qquad f_\infty(0)=1,\\
 (\Delta+|h_\infty|^2)f_\infty=0,\qquad
 \partial_\mu f_\infty=q_\infty f_\infty
 \quad\text{on }\partial M_\infty,
 \end{gathered}
\]
where \(q_\infty=\cot\theta_*\,h_\infty(\mu,\mu)\).
Thus \(M_\infty\) is stable; see \cite[Section 5]{HS23}.

The \(O(n-1)\)-symmetry gives, for every fixed \(R\),
\[
 \sup_{M_j'\cap B_R}|\nu_{i,j}|
 \le\frac{R}{\lambda_jr(p_j^*)-R}\longrightarrow0,
 \qquad 2\le i\le n-1.
\]
Hence \(E_2,\ldots,E_{n-1}\) are tangent to \(M_\infty\)
and its boundary. Completeness gives
\[
 M_\infty=\Sigma^2\times\mathbb R^{n-2}.
\]
Here \(\Sigma\subset\mathbb R^3\) is connected, complete,
two-sided, and minimal.
If \(H_j\to H_*\), the contact angle along
\(\partial M_\infty\) is \(\theta_*\).
Interior contact with \(H_*\) would imply \(h_\infty=0\)
by the height maximum principle, contradicting \(|h_\infty|(0)=1\).

The surface \(\Sigma\) is stable and properly immersed, with
\(\operatorname{Area}(\Sigma\cap B_R)\le CR^2\).
If \(\partial\Sigma=\varnothing\), the stable Bernstein theorem
\cite{dCP79,FCS80,Pog81} gives a plane.
Otherwise, the capillary stable Bernstein theorems
\cite[Theorem 1.5]{HS23}, \cite[Theorem 2.10]{LZZ25} give a half-plane.
Thus \(h_\infty=0\), contradicting \(|h_\infty|(0)=1\).
Hence \(r|h|\le C\), so \(|h|\le2C\) for \(r\ge\frac12\).

Suppose that \(\theta_0>\frac{\pi}{2}\), and set
\(\kappa_0=-\cot\theta_0>0\).
By Proposition~\ref{two-ends}(ii), \(M\) is the graph
\(x_{n+1}=v(x',x_n)\), with
\(|Dv|\le\frac{1}{\kappa_0}\) and
\(\partial\{v>0\}\subset\{r\ge1\}\).
Since \(v(0,0)>0\) and the cylinder
\(\{r<1\}\times\mathbb R\) is connected,
\[
 \{r<1\}\times\mathbb R\subset\{v>0\}.
\]
Interior graph estimates therefore apply on base balls of radius
\(\frac14\) when \(r<\frac12\).

If \(\theta_0=\frac{\pi}{2}\) and \(n\le6\), then
\(r(p)<\frac12\) implies
\(\operatorname{dist}(p,\partial M)\ge\frac12\).
The positive Jacobi field \(-\nu_{n+1}\) gives stability on
\(M\cap B_{\frac14}(p)\).
The interior curvature estimate \cite[Theorem 2]{Bel25}
and local graph estimates, together with \eqref{universal-area},
give the derivative bounds.
Thus, in both cases,
\[
 \sup_{M\cap\{r<\frac12\}}|\nabla^j h|\le C_j
 \qquad(j\ge0).
\]

On \(S\), the rotation inequality \(x_{n+1}\nu_r-r\nu_{n+1}\ge0\) gives
\[
 r+vv'\ge0,\qquad (r^2+v^2)'\ge0,\qquad r^2+v^2\le1.
\]
Hence \(T\le1\) and \(K\subset B_1\). Let \(\gamma\) be the unit-speed geodesic in \(M\) starting at
\(\partial S\), with
\[
 \gamma'(0)=\cos\theta E_r+\sin\theta E_{n+1}.
\]
Reflection symmetry gives \(\gamma\subset S\).
The curvature bound gives \(|\gamma''|\le C\) near \(\partial S\).
Choose \(0<\ell_0\le\frac14\) with
\(C\ell_0\le\frac{\sin\theta_1}{2}\).
Then
\(\gamma_{n+1}'(\ell)\ge\sin\theta-C\ell
\ge\frac{\sin\theta_1}{2}\) for \(0\le\ell\le\ell_0\), so
\[
 T\ge\gamma_{n+1}(\ell_0)
 =\int_0^{\ell_0}\gamma_{n+1}'(\ell)\,d\ell
 \ge\frac{\ell_0\sin\theta_1}{2}=:T_->0.
\]

For \(G=\frac{Du}{\sqrt{1+|Du|^2}}+\cos\theta E_{n+1}\),
\[
 \operatorname{div}G=0,\qquad
 G\cdot E_{n+1}=0\quad(x_{n+1}=0).
\]
Let \(n_K\) be the outward unit normal to \(K\) along \(S\).
As the graph approaches \(S\), \(\frac{Du}{\sqrt{1+|Du|^2}}\to n_K\).
Apply the divergence theorem to \(\{u>\epsilon\}\cap B_R\).
As \(\epsilon\downarrow0\), the level sets
\(\{u=\epsilon\}\) converge smoothly to \(S\), and
\[
 \int_{\{u=\epsilon\}}G\cdot\frac{Du}{|Du|}\,dA
 \longrightarrow
 \int_S(1+\cos\theta\,n_K\cdot E_{n+1})\,dA
 =|S|+\omega_{n-1}\cos\theta.
\]
Using \eqref{main-expansion} with
\(x_{n+1}=\xi\sin\theta\) and letting \(R\to\infty\) gives
\[
 \frac{(n-2)|S^{n-1}|}{2}\sin^2\theta\,m
 =|S|+\omega_{n-1}\cos\theta.
\] Also
\[
 \begin{split}
 \omega_{n-1}
 &\le |S|=|S^{n-2}|\int_0^1r^{n-2}\sqrt{1+v'^2}\,dr\\
 &\le\omega_{n-1}+|S^{n-2}|T\le n\omega_{n-1}.
 \end{split}
\]
Since \(\theta\in[\frac{\pi}{2},\theta_1]\), these inequalities give uniform
positive lower and upper bounds for \(m\).

On the upper boundary, write \(x_n=g(r)=u(r,0)\) and
\(p=\frac{g'}{\sqrt{1+g'^2}}>0\). The positive mean curvature of this boundary gives
\[
 -p'-\frac{(n-2)p}{r}>0,\qquad p(1+)=1,\qquad
 p(r)\le r^{2-n},\qquad 0<g(r)\le\operatorname{arcosh}r.
\]
The last inequality follows by integrating
\(g'(r)\le(r^{2n-4}-1)^{-\frac{1}{2}}\le(r^2-1)^{-\frac{1}{2}}\).
With \(a=\frac{u_r}{\sqrt{1+|Du|^2}}=-\nu_r>0\), apply the divergence theorem
to \(G\) on \(\{r<R,0<x_{n+1}<H\}\setminus K\).
By \eqref{main-expansion}, the boundary integral over
\(\{r<R,x_{n+1}=H\}\) tends to zero as \(H\to\infty\), giving
\begin{equation}\label{cylinder}
 |S^{n-2}|R^{n-2}\int_0^\infty a(R,x_{n+1})\,dx_{n+1}
 =\frac{(n-2)|S^{n-1}|}{2}\sin^2\theta\,m.
\end{equation}
For \(R\ge2\),
\[
 \nu_n(R,0)=\sin\theta\sqrt{1-p(R)^2}
                    \ge\frac{\sqrt3\sin\theta}{2}.
\]
The curvature bound above gives, for a uniform
\(\delta>0\), a graph on a base ball of radius \(\delta R\) with
\(\nu_n\ge\frac{\sin\theta}{2}\). The Harnack inequality up to the boundary
\cite{Chicco05} gives
\[
 a(R,x_{n+1})\ge\gamma_0a(R,0)=\gamma_0\sin\theta\,p(R)
       \quad(0\le x_{n+1}\le\delta_0R).
\]
Here \(\gamma_0\) and \(\delta_0\) are uniform.
Equation~\eqref{cylinder} now implies
\[
 p(R)\le CmR^{1-n},\qquad g'(R)\le CmR^{1-n},
\]
and hence
\[
 b\le\operatorname{arcosh}2+Cm,\qquad
 0<b-g(R)\le CmR^{2-n}\quad(R\ge2).
\]
The boundary curve in the \((r,x_n)\)-plane starts in direction \(E_n\).
The curvature bound gives \(\frac{dx_n}{d\ell}\ge\frac{1}{2}\) on an initial segment
of uniform positive length. Thus \(b\ge b_0>0\).

Proposition~\ref{two-ends} gives
\[
 u_r>0,\qquad
 u_{x_{n+1}}\ge\kappa\sqrt{1+u_r^2}\ge\kappa,\qquad
 V_r<0,\quad V_{x_{n+1}}\le0.
\]
Consequently,
\[
 \begin{gathered}
 V(r,x_{n+1})\le Cr^{2-n}\quad(r\ge2),\qquad V\le b\quad(r>1),\\
 V\le b+\kappa v(r)\le b+\kappa\quad(r\le1).
 \end{gathered}
\]
For \(r\ge2\) and \(0\le x_{n+1}\le\delta r\), boundary graph estimates
give a uniform bound \(|Du|\le C\). For large \(\rho\) and
\(x_{n+1}\ge\delta r\), the interior gradient estimate on balls of radius
\(\frac{x_{n+1}}{2}\), together with the preceding bound for \(V\), gives the same bound.

Subtracting the affine solution yields
\[
 \operatorname{div}(\bar A DV)=0,\qquad
 (\bar A DV)\cdot E_{n+1}=0\quad(x_{n+1}=0),
\]
where \(\bar A\) is the uniformly elliptic matrix
\[
 \bar A=\int_0^1
 \left.D_p\!\left(\frac p{\sqrt{1+|p|^2}}\right)
       \right|_{p=\kappa E_{n+1}-tDV}\,dt.
\]
For \(r<x_{n+1}\), a fixed interior Harnack chain joins
\((r,x_{n+1})\) to \((x_{n+1},x_{n+1})\), giving
\(V\le Cx_{n+1}^{2-n}\). For \(r\ge x_{n+1}\), use
\(V\le Cr^{2-n}\). Thus \(V\le C\rho^{2-n}\).
Boundary regularity after rescaling bounds the H\"older norms of \(\bar A\)
independently of the annulus radius and the normalized hypersurface.
Schauder estimates then give \eqref{uniform-decay} for every \(j\).
\end{proof}

In fixed coordinates on the ends, define the weighted H\"older norm by
\[
 \|f\|_{C^{k,\alpha}_{-\delta}}
 =\|f\|_{C^{k,\alpha}(M\cap B_{R_0})}
  +\sup_{R\ge R_0}R^\delta
        \|f(R\,\cdot)\|_{C^{k,\alpha}(A_{1,2})}.
\]
The supremum is taken over all ends.

\begin{prop}\label{properness}
Let \(M_j\) be index-one hypersurfaces satisfying the assumptions of this
appendix, with the coordinates and symmetries of
Proposition~\ref{two-ends}, normalized so that
\(\partial M_j\cap\{x_n=0\}\) has radius one. If
\(\theta_j\to\theta_*\in[\theta_0,\theta_1]\), then a subsequence converges smoothly
with multiplicity one to a connected embedded hypersurface with two regular boundary
ends and index one. After subtracting the corresponding affine end
functions, convergence also holds
in \(C^{k,\alpha}_{-\delta}\) for each fixed \(k\), \(0<\alpha<1\), and
\(0<\delta<n-2\). The same holds with the normalization \(b=1\).
\end{prop}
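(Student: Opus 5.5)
The plan is to combine the uniform estimates of Lemma~\ref{uniform-estimates} with a local compactness argument, and then pass the end expansions to the limit. First I extract a local smooth limit. Lemma~\ref{uniform-estimates} gives $r|h|\le C$, bounds on all derivatives of $h$ on compact sets, and the area bound \eqref{universal-area}. Together with the capillary boundary condition and local graph representations, these give subsequential smooth convergence of $M_j$ on compact subsets of $\mathbb R^{n+1}_+$ to a properly embedded capillary minimal hypersurface $M_*$ with angle $\theta_*$. Multiplicity one follows from the global graph description of Proposition~\ref{two-ends}(ii): each $M_j$ is a graph $x_{n+1}=v_j$ with $|Dv_j|<|\tan\theta_j|$. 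When $\theta_*>\frac{\pi}{2}$ this gives uniform Lipschitz bounds, hence a single sheet. When $\theta_*=\frac{\pi}{2}$ (allowed only for $3\le n\le6$), I would instead use the $\Pi_n$-multiplicity-two structure and the symmetry $x_n\mapsto-x_n$ to separate the sheets. Nondegeneracy of the limit comes from the normalization and the lower bound $T\ge T_->0$: the set $S_j$ converges to a compact cap $S_*$ of radius one, so $M_*$ is nonflat and connected.

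Next I control the ends uniformly. The decay \eqref{uniform-decay}, $0<V_j\le C\rho^{2-n}$ with $|D^jV_j|\le C_j\rho^{2-n-j}$, holds beyond a uniform radius. The affine end functions converge because $b_j\in[b_0,b_1]$ and $\kappa_j\to\kappa_*$. Hence $V_j\to V_*$ locally, and the uniform decay passes to the limit. Convergence in $C^{k,\alpha}_{-\delta}$ for $\delta<n-2$ then follows by the standard splitting. On $\{\rho\ge R\}$ both terms are $O(R^{2-n})$, so the weighted norm there is at most $CR^{\delta-(n-2)}$, which is small. On $\{\rho\le R\}$ we have smooth convergence. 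A diagonal argument over $R$ and $k$ completes this step. The bounds $m_0\le m_j\le m_1$ give $m_*>0$, so $M_*$ has two regular boundary ends of the form \eqref{main-expansion}. The same argument works with the normalization $b=1$: by the two-sided bounds on $b$ the two normalizations differ by uniformly controlled dilations.

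The main obstacle is showing that the limit has index exactly one. For the lower bound, the radial field $a=-\nu_r$ satisfies \eqref{radial-negative} on $M_*$. Alternatively, cutoffs of the $a_j$ converge in $\mathcal D^{1,2}$, thanks to the uniform $a_j=O(\rho^{1-n})$ and $O(r)$ bounds, so $\Ind\ge1$. For the upper bound, suppose $M_*$ had two $Q$-orthogonal negative directions spanned by $\varphi_1,\varphi_2\in C_c^\infty(M_*)$. I would transplant them to $M_j$ via the normal graph over compact sets. Because the coefficients $|h_j|^2$ and $q_j$ converge smoothly on the supports, $Q_{M_j}$ restricted to the transplanted span converges to $Q_{M_*}$ restricted to $\operatorname{span}\{\varphi_1,\varphi_2\}$, which is negative definite. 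This gives $\Ind_{\theta_j}(M_j)\ge2$ for large $j$, a contradiction. The delicate part is the boundary: the transplanting maps must respect $\partial\mathbb R^{n+1}_+$. To arrange this, I would parametrize over $M_*$ using the capillary-graph direction $\bar\nu$, as in Section~\ref{sec:structure}, and not the normal direction. At $\theta_*=\frac{\pi}{2}$, I expect interior curvature estimates near $r=0$ to require the result of \cite{Bel25}, as in Lemma~\ref{uniform-estimates}.
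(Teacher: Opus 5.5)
Much of your outline matches the paper: local smooth compactness from Lemma~\ref{uniform-estimates}, weighted convergence on the ends, the index bounds, and the passage to $b=1$. For the ends you use the bound $CR^{\delta+2-n}$ on $\{\rho>R\}$ together with smooth convergence on compact sets. For the index you transplant two $Q$-negative directions for the upper bound and use \eqref{radial-negative} for the lower bound. The switch to $b=1$ goes through $b_j\to b_*>0$.

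The gap is connectedness, which is part of the conclusion. You obtain it in a single clause (``$S_j$ converges to a compact cap $S_*$ \dots\ so $M_*$ is nonflat and connected''), but this does not follow. Uniform curvature bounds do not prevent a local smooth limit of connected hypersurfaces from splitting. For example, when $n\ge3$, a catenoid translated horizontally to infinity converges locally smoothly to two parallel hyperplanes. Nor do they prevent the limit from containing further components that are not attached to $S_*$. The uniform decay \eqref{uniform-decay} does not settle this. It says that outside a fixed ball the limit consists of two end graphs. It does not say that these graphs lie on the component through $S_*$, or that nothing else survives inside the ball. Even the claim that $S_*$ is a smooth disk needs an argument, since the limiting profile curve of $S_j$ could a priori contain a segment of the axis $r=0$; the paper rules this out with $|t_j'|\le Cr_j$.

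The paper fills this in with two steps.

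First, the boundary curves $r\mapsto(r,0,\ldots,0,\pm g_j(r),0)$, $1\le r\le R$, have length at most $R-1+b_1$, because $g_j'\ge0$ and $0<g_j<b_1$. Hence they stay in a fixed compact set, converge, and join the limiting boundary sphere to both limiting end graphs $E_{*,\pm}$. So $S_*\cup E_{*,+}\cup E_{*,-}$ lies in one component $M_*^0$.

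Second, by \eqref{wedge}, $M_j\setminus B_{R_0}\subset E_{j,+}\cup E_{j,-}$ for a uniform $R_0$. So any other limiting component $N$ lies in $\overline B_{R_0}$, has finite volume by \eqref{universal-area}, and is compact by the intrinsic volume lower bound. It is then excluded in both cases:
\begin{itemize}
\item if $\partial N\ne\varnothing$, the harmonic height forces $x_{n+1}\equiv0$ on $N$, which is incompatible with contact angle $\theta_*$ since $\sin\theta_*>0$;
\item if $\partial N=\varnothing$, then $\int_N\Delta|X|^2$ would equal both $0$ and $2n\Vol(N)>0$.
\end{itemize}

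Once the limit is connected, multiplicity one only has to be checked at one place. The paper uses the boundary sphere, where $-\nu_{r,j}=\sin\theta_j\ge\sin\theta_1$. Your separate and rather vague treatment of $\theta_*=\frac{\pi}{2}$ through $\Pi_n$ then becomes unnecessary. You should still derive $\nu_r<0$ and $\nu_{n+1}<0$ on the limit from the strong maximum principle, as the paper does, to exclude sheet contact.
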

\begin{proof}
Lemma~\ref{uniform-estimates} gives uniform curvature
bounds and \(K_j\subset B_1\). Equation~\eqref{wedge} bounds the upper
graphs by \(0<u_{j,+}<b_1-x_{n+1}\cot\theta_1\). Local smooth compactness
gives subsequential convergence to \(M_*\) on each local component.

Parametrize \(S_j\cap\Sigma_{M_j}\) by arclength:
\[
 \begin{gathered}
 \gamma_j(s)=(r_j(s),t_j(s)),\quad 0\le s\le L_j,\qquad
 r_j'\le0,\quad t_j'\ge0,\quad |\gamma_j'|=1,\\
 1\le L_j\le1+T_j\le2,\qquad |t_j'|\le Cr_j.
 \end{gathered}
\]
After taking a subsequence, \(L_j\to L\) and \(\gamma_j\to\gamma=(r,t)\)
smoothly, with
\[
 \gamma(0)=(1,0),\qquad \gamma(L)=(0,T_*),\qquad T_*\ge T_-.
\]
Monotonicity and \(|\gamma'|=1\) make \(\gamma\) injective;
\(|t'|\le Cr\) excludes an interval contained in \(\{r=0\}\).
Rotating \(\gamma\) therefore gives a single smooth disk \(S_*\).

On the boundary sphere,
\[
 -\nu_{r,j}=\sin\theta_j\ge\sin\theta_1>0.
\]
The radial graph converges there with multiplicity one. Let \(M_*^0\)
be the limiting component containing this sphere; the convergence of
\(\gamma_j\) gives \(S_*\subset M_*^0\).
Write \(g_j(r)=u_{j,+}(r,0)\). The boundary curves
\[
 \{(r,0,\ldots,0,\pm g_j(r),0):1\le r\le R\}
\]
are parametrized by arclength, starting at \(E_1\). Since
\(g_j'\ge0\) and \(0<g_j<b_1\), their lengths satisfy
\[
 \ell_{j,R}^{\pm}
 =\int_1^R\sqrt{1+g_j'(r)^2}\,dr
 \le R-1+g_j(R)\le R-1+b_1.
\]
For each fixed \(R\), the boundary estimates give convergent curves
in \(M_*^0\). Choose \(R_e>2\) beyond the uniform radius in
Lemma~\ref{uniform-estimates}, with \(CR_e^{2-n}<\frac{b_0}{2}\), and set
\[
 E_{j,\pm}
 =\{(x',u_{j,\pm}(x',t),t):t\ge0,\ |x'|^2+t^2>R_e^2\}.
\]
Their limiting graphs \(E_{*,\pm}\) are connected. Taking \(R>R_e\),
the two limiting boundary curves join the sphere to \(E_{*,+}\) and
\(E_{*,-}\), respectively. Thus
\[
 S_*\cup E_{*,+}\cup E_{*,-}\subset M_*^0.
\]
Let \(\kappa_1=-\cot\theta_1\). Equation~\eqref{wedge} gives
\[
 |X|^2\le R_e^2+(b_1+\kappa_1R_e)^2
 \quad\text{when }|x'|^2+x_{n+1}^2\le R_e^2.
\]
For \(R_0>\sqrt{R_e^2+(b_1+\kappa_1R_e)^2}\), every other limiting
component \(N\ne M_*^0\) therefore satisfies
\[
 M_j\setminus B_{R_0}\subset E_{j,+}\cup E_{j,-},\qquad
 N\subset\overline B_{R_0}\quad(N\ne M_*^0).
\]
By \eqref{universal-area}, \(N\) has finite volume. The intrinsic volume
lower bound following \eqref{Sob} therefore makes \(N\) compact.
If \(\partial N\ne\varnothing\), the harmonic height gives
\(x_{n+1}\equiv0\), contradicting \(\sin\theta_*>0\).
A closed component contradicts
\(\int_N\Delta|X|^2=2n\Vol(N)>0\).

The limiting normal satisfies \(\nu_r,\nu_{n+1}\le0\). Its values on the
sphere and \(\{r=0\}\), together with the strong maximum principle, give
\[
 \nu_r<0\quad(r>0),\qquad \nu_{n+1}<0\quad\text{in the interior}.
\]
The local graphs \(r=r(x_n,x_{n+1})\) for \(r>0\) and
\(x_{n+1}=v(x',x_n)\) in the interior exclude sheet contact. Multiplicity one at
the sphere extends over the connected limit.

After passing to a subsequence, \(b_j\to b_*>0\) and \(m_j\to m_*>0\).
The estimates in Lemma~\ref{uniform-estimates} pass to the limit, giving
two regular ends and finite total curvature.
Subtracting the affine end functions gives
\[
 \|u_j-u_*-(b_j-b_*)-(\kappa_j-\kappa_*)x_{n+1}\|
       _{C^{k,\alpha}_{-\delta}(\{\rho>R\})}
 \le CR^{\delta+2-n}\qquad(0<\delta<n-2).
\]
Together with smooth convergence on compact sets, this gives weighted convergence.

Let \(Q_j\) and \(Q_*\) denote the quadratic forms on \(M_j\) and \(M_*\).
If \(\Ind_{\theta_*}(M_*)\ge2\), choose
\(f_1,f_2\in C_c^\infty(M_*)\) with
\(Q_*(f_\alpha,f_\beta)=-\delta_{\alpha\beta}\).
Smooth convergence gives compactly supported functions
\(f_{\alpha,j}\) on \(M_j\) with
\[
 \bigl(Q_j(f_{\alpha,j},f_{\beta,j})\bigr)_{\alpha,\beta=1}^{2}
 \longrightarrow-I_2,
\]
contradicting \(\Ind_{\theta_j}(M_j)=1\).
Equation~\eqref{radial-negative} gives
\(\Ind_{\theta_*}(M_*)\ge1\), hence
\(\Ind_{\theta_*}(M_*)=1\).
Finally, \(b_j\to b_*>0\) gives
\(b_j^{-1}M_j\to b_*^{-1}M_*\), and these rescaled hypersurfaces
have \(b=1\).

\end{proof}

Use the coordinates of Proposition~\ref{two-ends} and
normalize by \(b=1\), so that the end planes are
\(x_n=\pm(1+\kappa x_{n+1})\).
Write \(\kappa=-\cot\theta\ge0\).
Even reflection, the catenoid index theorem~\cite[Theorem 2.1]{TZ09},
and the Jacobi-field analysis of~\cite[Section 2.2]{KP07} give
\(\Ind_{\frac{\pi}{2}}(C^+)=1\) and
\begin{equation}\label{half-catenoid-kernel}
 \ker_{\mathcal D^{1,2}(C^+)}(\Delta+|h|^2,\partial_\mu)
 =\operatorname{span}\{\nu_1,\ldots,\nu_{n-1}\}.
\end{equation}

Fix \(\frac{n-2}{2}<\delta<n-2\). In this proof, all functions and
boundary data are \(O(n-1)\)-symmetric and even in \(x_n\).

\begin{prop}\label{angle-chart}
For every \(n\ge3\), near each normalized index-one hypersurface with
\(\kappa>0\) and the regular ends assumed in this appendix, the normalized
solutions form a unique smooth curve parametrized by the contact angle. Near \(C^+\),
this curve is parametrized by a half-interval \(\kappa\ge0\).
These hypersurfaces are complete, properly embedded, have two regular boundary
ends, and have index one.
\end{prop}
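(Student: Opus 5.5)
The plan is to apply the implicit function theorem to the capillary minimal surface equation. We linearize around a fixed normalized index-one hypersurface $M$ (or $C^+$), work in weighted H\"older spaces of $O(n-1)$-symmetric functions that are even in $x_n$, and treat the contact angle as the parameter.

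\textbf{Setup.} Fix $M$ with $\kappa_0=-\cot\theta_0\ge0$. Write nearby hypersurfaces as normal graphs
\[
X+\bigl(\phi+\beta\,\zeta_b+\sigma\,\zeta_\kappa\bigr)\nu .
\]
Here $\phi\in C^{2,\alpha}_{-\delta}$ is symmetric, and $\zeta_b,\zeta_\kappa$ are fixed cutoff functions supported on the ends. They realize infinitesimal changes of the affine end functions $b+\kappa x_{n+1}$, namely translation and rotation of the end half-planes. The boundary is allowed to slide along $\{x_{n+1}=0\}$ by composing with a fixed tangential vector field, in the standard way for capillary problems. Define
\[
\mathcal F(\phi,\beta,\sigma,\kappa)=\bigl(H,\ \langle\nu,E_{n+1}\rangle+\cos\theta(\kappa)\bigr),
\]
with values in $C^{0,\alpha}_{-\delta-2}\times C^{1,\alpha}_{-\delta-1}(\partial M)$. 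The normalization $b=1$ fixes $\beta$ as a function of the other parameters. The decay $\delta>\frac{n-2}{2}$ puts $\phi$ into $\mathcal D^{1,2}$, and $\delta<n-2$ keeps the $\rho^{2-n}$ modes in the space. Smoothness of $\mathcal F$ is standard, since the end expansions \eqref{boundary-exp} show that the geometry is asymptotically affine.

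\textbf{Linear theory.} The linearization in $\phi$ is $L=(\Delta+|h|^2,\ \partial_\mu-q)$. It is Fredholm between these weighted spaces because $\delta$ is not an indicial root: reflecting the half-space model evenly gives the Laplacian on $\mathbb R^n$, whose indicial roots are $0,\,2-n,\,1,\,1-n,\dots$. By Proposition~\ref{kernel}, the $\mathcal D^{1,2}$ kernel is $\mathrm{span}\{\nu_1,\dots,\nu_{n-1}\}$, and these fields are not $O(n-1)$-invariant. For $C^+$ the same conclusion follows from \eqref{half-catenoid-kernel}. Hence $L$ is injective on the symmetric even subspace. For surjectivity (the index count):
\begin{itemize}
\item Duality with weight $2-n+\delta$ identifies the cokernel with decaying symmetric Jacobi fields.
\item The Green's identity argument of Proposition~\ref{kernel}, which gave $A_0=0$, should show this cokernel is also trivial, after adjoining the $\zeta_b$ and $\zeta_\kappa$ directions.
\item The real issue is that $\mathcal L\zeta_\kappa$ hits exactly the missing $\rho^{2-n}$ obstruction: pairing with the dilation field $Z$ gives a nonzero multiple of $b\sin\theta$.
\end{itemize}
So the full derivative $D_{(\phi,\sigma)}\mathcal F$ is an isomorphism, and the implicit function theorem gives a unique smooth curve $\kappa\mapsto M_\kappa$ near $\kappa_0$.

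\textbf{The endpoint $\kappa=0$.} Near $C^+$ the same argument applies, except that only $\kappa\ge0$ yields a normal with $\cos\theta\le0$. The curve for $\kappa<0$ is the reflected family. Restricting to $\kappa\ge0$ gives the half-interval.

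\textbf{Properties of the nearby solutions.} Embeddedness and properness follow from smallness of $\phi$ in the weighted norm together with the graph structure on the ends. The weighted decay gives two regular boundary ends. For the index, it is lower semicontinuous under weighted convergence; the argument is the one in Proposition~\ref{properness}, which gives $\le1$. The lower bound $\ge1$ comes from \eqref{radial-negative}, once the symmetries and the signs $\nu_n>0$, $-\nu_r>0$ are known. Those signs persist in a neighborhood: near $r=0$ by $C^1$ closeness, and on the ends by the expansion. Alternatively, rerun Proposition~\ref{two-ends}.

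\textbf{Main obstacle.} The hardest step is the precise Fredholm and cokernel computation: choosing $\delta$ and the deficiency elements $\zeta_b,\zeta_\kappa$ so that the map is exactly an isomorphism, and checking via Green's identity against $Z$ and the constant modes that rotating the ends supplies the missing direction. I would first compute the relative index $+1$ gained by crossing the root $2-n$, and then verify that the pairing with $Z$ is nonzero.
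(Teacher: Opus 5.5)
Your framework is the paper's: the implicit function theorem in $O(n-1)$-symmetric, $x_n$-even weighted H\"older spaces, with injectivity from Proposition~\ref{kernel} and \eqref{half-catenoid-kernel}. However, two steps fail as written.

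\textbf{The index upper bound.} This is the more serious gap. The argument of Proposition~\ref{properness} transplants negative directions of a limit to the approximating hypersurfaces. Applied to $M_\kappa\to M_{\kappa_0}$, it gives $\Ind(M_\kappa)\ge1$, not $\le1$. An upper bound does not follow from convergence, because the base has nullity: $\nu_1,\dots,\nu_{n-1}$ are finite-energy Robin Jacobi fields on $M_{\kappa_0}$. They are not $O(n-1)$-invariant, so your symmetric IFT says nothing about them, and a priori they could turn into negative directions. Spectral continuity alone yields only $\Ind(M_\kappa)\le n$.

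The missing idea is the paper's spectral pinning. Identify the Dirichlet spaces by pullback and write $Q_\kappa=I-K_\kappa$ with $\|K_\kappa-K_{\kappa_0}\|\to0$; this uses $|h_\kappa|^2\to|h_{\kappa_0}|^2$ in $L^{\frac n2}$ and $q_\kappa\to q_{\kappa_0}$ in $L^{n-1}$. The operator $K_{\kappa_0}$ has $\lambda_1>1=\lambda_2=\cdots=\lambda_n>\lambda_{n+1}$, and min--max keeps $\lambda_1>1>\lambda_{n+1}$. The horizontal translation fields remain exact finite-energy Robin Jacobi fields on $M_\kappa$, which forces $\lambda_2=\cdots=\lambda_n=1$. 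Separately, uniqueness among \emph{all} nearby normalized index-one hypersurfaces needs Proposition~\ref{two-ends} to place them in your symmetric class first.

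\textbf{The linear analysis.} Your account is internally inconsistent. For $\frac{n-2}{2}<\delta<n-2$, both $-\delta$ and the dual weight $2-n+\delta$ lie in $(2-n,0)$, which contains no indicial root. So nothing is crossed, and there is no ``missing $\rho^{2-n}$ obstruction.'' Your first bullet already settles surjectivity. A symmetric Jacobi field of order $\rho^{2-n+\delta}$ improves to $O(\rho^{2-n})$ by the Newtonian-potential argument of Section~\ref{sec:structure}, since the first moment of the source is finite exactly when $\delta<n-2$. It then lies in $\mathcal D^{1,2}$ and hence vanishes.

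The paper argues directly. Since $F\in L^{\frac{2n}{n+2}}$ and $G\in L^{\frac{2(n-1)}{n}}$, the Fredholm alternative for $I-K$ gives a $\mathcal D^{1,2}$ solution. The barrier $\rho^{-\delta}-\epsilon_\delta\xi\rho^{-\delta-1}$ and Schauder estimates then give the weighted bound.

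Your proposed pairing cannot detect a cokernel: $Z\to b\sin\theta$ does not decay, so it is not in the dual space. More basically, with $\sigma$ a free unknown, the boundary component of $\mathcal F$ tends to the nonzero constant $\cos\theta(\kappa_0+\sigma)-\cos\theta(\kappa)$ unless $\sigma$ is tied to $\kappa$. In that case $\mathcal F$ does not even map into $C^{1,\alpha}_{-\delta-1}$. The role of $\sigma$ is to match the asymptotic contact angle, not to remove a cokernel.

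The paper builds this into the base map $X_\epsilon=X+\epsilon x_{n+1}\chi(x_n)E_n$. It then checks $H(X_\epsilon)=O(\epsilon\rho^{-n})$ and a boundary error $O(\epsilon\rho^{2-2n})$; this is where $\delta<n-2$ is actually used. The direction $\zeta_b$ is also unnecessary, because decay of the graph function preserves $b=1$.

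Finally, the branch $\kappa<0$ is not a reflected family: its ends $x_n=\pm(1-|\kappa|x_{n+1})$ cross, so it is not embedded. This is harmless, since only $\kappa\ge0$ is needed.
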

\begin{proof}
Let \(M_0=C^+\).
For \(F\in C^{0,\alpha}_{-\delta-2}(M_0)\) and
\(G\in C^{1,\alpha}_{-\delta-1}(\partial M_0)\), consider
\[
 (\Delta+|h_0|^2)f=F\quad\text{on }M_0,\qquad
 (\partial_{\mu_0}-q_0)f=G\quad\text{on }\partial M_0.
\]
For \(O(n-1)\)-symmetric functions even in \(x_n\),
\(Q\) is represented by \(I-K\) on
\(\mathcal D^{1,2}(M_0)\), with \(K\) compact,
and \eqref{half-catenoid-kernel} gives
\(\ker(I-K)=\{0\}\).
Since \(\delta>\frac{n-2}{2}\), we have
\(F\in L^{\frac{2n}{n+2}}(M_0)\) and
\(G\in L^{\frac{2(n-1)}{n}}(\partial M_0)\).
Hence there is a unique weak solution
\(f\in\mathcal D^{1,2}(M_0)\).
The annular Robin estimates used in Proposition~\ref{kernel} give
\(f=o(\rho^{-\frac{n-2}{2}})+O(\rho^{-\delta})\to0\).

For the end operators \eqref{jacobi-end-operators}, choose
\(\epsilon_\delta>0\) sufficiently small and set
\(H_\delta=\rho^{-\delta}
-\epsilon_\delta\xi\rho^{-\delta-1}>0\).
For large \(\rho\) and some \(\gamma>0\),
\[
 LH_\delta\le-\gamma\rho^{-\delta-2},\qquad
 B_0H_\delta\le-\gamma\rho^{-\delta-1}.
\]
The comparison in Proposition~\ref{two-ends}, applied to
\(\frac{f}{\nu_{0,n}}\), gives \(f=O(\rho^{-\delta})\).
Schauder estimates \cite[Theorem 7.3]{ADN59} and the variational bound yield
\[
 \|f\|_{C^{2,\alpha}_{-\delta}(M_0)}
 \le C\left(
 \|F\|_{C^{0,\alpha}_{-\delta-2}(M_0)}
 +\|G\|_{C^{1,\alpha}_{-\delta-1}(\partial M_0)}
 \right).
\]
Since \(C^{2,\alpha}_{-\delta}(M_0)
\subset\mathcal D^{1,2}(M_0)\), uniqueness also holds
in the weighted space.

Choose a smooth odd function \(\chi(x_n)\) equal to \(\pm1\) on the two ends outside
a compact set. For \(\epsilon=\kappa\ge0\), define
\[
 X_\epsilon=X+\epsilon x_{n+1}\chi(x_n)E_n.
\]
This deformation preserves the support and symmetries. Its end planes are
\(x_n=\pm(1+\epsilon x_{n+1})\), and the change in the differential
is \(O(|\epsilon|)\). Choose a transverse field \(T\) equal to
\[
 \frac{E_n}{\nu_{0,n}}\quad\text{on the ends},
 \nu_0 \text{on the compact part of the boundary}.
\]
Combine these fields smoothly, keeping \(\langle T,\nu_0\rangle=1\) and
\(T\) tangent to the support on \(\partial M_0\).
For \(v\in C^{2,\alpha}_{-\delta}(M_0)\), use the graph
\(X_{\epsilon,v}(p)=X_\epsilon(p)+v(p)T(p)\).

Write the mean curvature and contact angle equations for
\(X_{\epsilon,v}\) as \(\mathcal F(\epsilon,v)=0\), where
\(\cos\theta=-\frac{\epsilon}{\sqrt{1+\epsilon^2}}\).
The map \(\mathcal F\) is smooth, and
\[
 D_v\mathcal F(0,0)
 =(\Delta+|h_0|^2,\partial_{\mu_0}-q_0),
\]
which has the bounded inverse proved above.
For \(v=0\), the mean curvature and the upper boundary error satisfy
\[
 H(X_\epsilon)=O(\epsilon\rho^{-n}),\qquad
 \epsilon\bigl(1-\sqrt{1+|D_{x'}u|^2}\bigr)
 =O(\epsilon\rho^{2-2n}).
\]
Since \(\delta<n-2\), we have
\(\mathcal F(\epsilon,0)\in
C^{0,\alpha}_{-\delta-2}(M_0)\times
C^{1,\alpha}_{-\delta-1}(\partial M_0)\).
The implicit function theorem gives a unique smooth solution
\(v(\epsilon)\) for sufficiently small \(\epsilon\ge0\), with
\[
 v(0)=0,\qquad
 \mathcal F(\epsilon,v(\epsilon))=0,\qquad
 \|v(\epsilon)\|_{C^{2,\alpha}_{-\delta}(M_0)}
 \le C\epsilon.
\]

The two ends satisfy
\(x_n^\pm=\pm(1+\epsilon x_{n+1})+O(\rho^{-\delta})\).
For large \(\rho\), they are disjoint and avoid the compact part.
On the remaining compact set, embeddedness follows from
\(C^1\)-closeness to \(M_0\).
The end graphs are proper, and the induced metrics are uniformly
equivalent to that of \(M_0\), giving completeness.
The symmetries preserve the center, and
\(v=O(\rho^{-\delta})\) preserves \(b=1\).

On the upper end, write
\(x_n=1+\kappa x_{n+1}+w\), with
\(w\in C^{2,\alpha}_{-\delta}\).
In the coordinates of \eqref{end-PDE}, elliptic estimates give
\[
 \Delta w=O_k(\rho^{-3-2\delta}),\qquad
 w_\xi=O_k(\rho^{-2-2\delta})\quad(\xi=0).
\]
Since \(2\delta>n-2\), the boundary-end argument in
Section~\ref{sec:structure} gives \eqref{boundary-exp},
with \(d=0\) by symmetry.
Reflection gives the lower end, and
\(|\nabla^jh|=O(\rho^{-n-j})\), hence finite total curvature
and extrinsic Euclidean volume growth.

Let \(M_\epsilon\) be the solution just constructed, with contact angle
\(\theta_\epsilon\).
After pullback to \(M_0\), identify the Dirichlet spaces by isometries
tending to the identity. For the compact operators \(K_\epsilon\)
representing \(Q_\epsilon\) as in Section~\ref{sec:classification-three},
the graph estimates give
\[
 \begin{gathered}
 |h_\epsilon|^2\longrightarrow |h_0|^2\quad\text{in }L^{\frac{n}{2}}(M_0),\qquad
 q_\epsilon\longrightarrow q_0\quad\text{in }L^{n-1}(\partial M_0),\\
 \|K_\epsilon-K_0\|\longrightarrow0.
 \end{gathered}
\]
With eigenvalues in decreasing order, the index of \(M_0\) and
\eqref{half-catenoid-kernel} give
\[
 \lambda_1(K_0)>1=\lambda_2(K_0)=\cdots=\lambda_n(K_0)>
 \lambda_{n+1}(K_0).
\]
The min--max principle gives
\(\lambda_1(K_\epsilon)>1>\lambda_{n+1}(K_\epsilon)\) for small \(\epsilon\).
The \(n-1\) translation fields remain independent Robin Jacobi fields of
finite energy, so
\[
 \lambda_2(K_\epsilon)=\cdots=\lambda_n(K_\epsilon)=1,\qquad
 \Ind_{\theta_\epsilon}(M_\epsilon)=1.
\]

Let \(M\) be a normalized index-one hypersurface with
\(\kappa_0=-\cot\theta_0>0\). Set
\[
 \epsilon=\kappa-\kappa_0,\qquad
 T=\nu+\cot\theta_0\,\mu
 \quad\text{on the compact part of }\partial M.
\]
Using Proposition~\ref{kernel}, the preceding argument gives
a smooth family of normalized index-one hypersurfaces \(M_\kappa\)
satisfying
\[
 M_{\kappa_0}=M,\qquad
 |\kappa-\kappa_0|<\epsilon_0,\qquad
 0<\epsilon_0<\kappa_0.
\]
By Proposition~\ref{two-ends}, every nearby normalized index-one
hypersurface has the same \(O(n-1)\) and reflection symmetries.
The uniqueness in the implicit function theorem shows that
\(M_\kappa\) is the only such hypersurface with parameter \(\kappa\).
\end{proof}

\begin{proof}[Proof of Proposition~\ref{hairpin-existence}]
Fix \(n\ge3\). Let \(\mathscr H_n\) be the space of normalized
index-one hypersurfaces with regular ends and \(\kappa>0\),
equipped with the weighted graph topology.
By Propositions~\ref{properness} and~\ref{angle-chart},
\[
 \pi:\mathscr H_n\longrightarrow(0,\infty),
 \qquad \pi(M)=\kappa
\]
is proper and has smooth local inverses.

Let \(\mathscr C_n\) be the connected component containing the
local family constructed near \(C^+\) in
Proposition~\ref{angle-chart}.
This component is open and closed in \(\mathscr H_n\), so
\(\pi|_{\mathscr C_n}\) is proper.
Its image is nonempty, open and closed in \((0,\infty)\); hence
\[
 \pi(\mathscr C_n)=(0,\infty).
\]
A proper local homeomorphism from a connected space onto an
interval is a homeomorphism. Therefore
\[
 \pi^{-1}(\kappa)\cap\mathscr C_n=\{M_\kappa\}
 \qquad(\kappa>0).
\]
The local inverses give smooth dependence on \(\kappa\).
Proposition~\ref{angle-chart} extends this family to \(\kappa=0\),
with \(M_0=C^+\).

For \(\theta\in(0,\pi)\), set \(M_\theta=M_{|\cot\theta|}\)
and choose its unit normal so that
\[
 \nu_{n+1}=\cos\theta\quad\text{on }\partial M_\theta,
 \qquad \Ind_\theta(M_\theta)=1.
\]
For \(\theta\ne\frac{\pi}{2}\), Proposition~\ref{two-ends} gives
\eqref{hairpin-graph}--\eqref{hairpin-graph-equation}.
\end{proof}

\section{Regular ends of capillary surfaces}\label{sec:surface-ends-proof}

In dimension two, we write the Weierstrass representation as
\[
 dX=\operatorname{Re}\Phi,\qquad
 \Phi=\left(\frac{G^{-1}-G}{2},\frac{i(G^{-1}+G)}2,1\right)\omega,
 \qquad \omega=2\partial x_3.
\]
The contact angle gives \(|G|=k\) on \(\partial M\), for a fixed \(k>0\).

\begin{proof}[Proof of Lemma~\ref{surface-ends}]
Finite total curvature and conformal compactification follow from
\cite[Theorem 1.4]{HS23}.
At a boundary end, choose a conformal coordinate
\(z=x+iy\) on \(\{0<|z|<1,\ y\ge0\}\).
Schwarz reflection gives
$
 G(z)=\frac{k^2}{\overline{G(\bar z)}}\quad(y<0).
$
Finite total curvature implies
\[
 \int_{0<|z|<1}
 \frac{4|G'(z)|^2}{(1+|G(z)|^2)^2}\,dx\,dy
 \le C_k\int_E|h|^2\,dA<\infty.
\]
By Picard's theorem and the area formula, \(G\) is meromorphic at \(0\).
Since \(|G(t)|=k\) for real \(t\ne0\),
$
 G(0)\in\mathbb C\setminus\{0\},|G(0)|=k.
$
By \cite[Theorem 6.15]{ABR01},
\[
 \begin{gathered}
 x_3(z)=a\frac{\operatorname{Im}z}{|z|^2}+H_0(z),
 \qquad a\ge0,\\
 \Delta H_0=0,\qquad H_0(\bar z)=-H_0(z).
 \end{gathered}
\]
where \(H_0\) extends as an odd harmonic function across \(\{\operatorname{Im}z=0\}\).
If \(a=0\), then the metric is bounded near \(0\), contradicting
completeness. Thus \(a>0\), and
\[
 X(z)=\operatorname{Re}\left(\frac{A}{z}+B\log z\right)+C+O(|z|),
 \qquad A\cdot A=0,\quad A\ne0.
\]
The \(O(|z|)\) term has bounded first derivatives. Hence, for some \(C_0>1\) and
\(0<|z|\) sufficiently small,
\[
 \begin{aligned}
 C_0^{-1}|z|^{-1}\le |X(z)|\le C_0|z|^{-1},
 C_0^{-1}|z|^{-4}\le \frac{dA}{d\operatorname{Re}z\,d\operatorname{Im}z}
 \le C_0|z|^{-4}.
 \end{aligned}
\]

Let \(P\) be the plane spanned by \(\operatorname{Re}A,\operatorname{Im}A\),
and let \(\nu_\infty\) be its unit normal. Set
\(e=\nu_\infty-\langle\nu_\infty,E_3\rangle E_3\ne0\).
The projection \(\Pi_PY=Y-\frac{(Y\cdot\nu_\infty)e}{|e|^2}\) onto \(P\)
along \(e\) preserves \(x_3\). Identifying \(P\) with \(\mathbb C\), define
\(F(w)=\Pi_PX(\frac{1}{w})\). Then
\[
 F(w)=w+O(\log|w|),\qquad DF=I+O(|w|^{-1}).
\]
Any two points of an exterior half-disk can be joined there by a path of
length at most \(\frac{\pi}{2}\) times their distance. Thus, for sufficiently large
\(R\) and \(v,w\) in this half-disk,
\[
 |(F-I)(v)-(F-I)(w)|
 \le\frac\pi2\sup_{|w|>R}|DF(w)-I|\,|v-w|<|v-w|.
\]
Thus \(F\) is injective. Since \(F\) is proper and has degree one at
infinity, its image contains the complement of a compact subset of
\(P\cap\{x_3\ge0\}\).
The end is a single-valued graph \(X=Y+f(Y)e\) over
\(P\cap\{x_3\ge0\}\) outside a compact set, with
\(f=O(\log r)\), \(|Df|=O(r^{-1})\), and curvature \(O(r^{-2})\).

At an interior end, choose a conformal coordinate \(z\) with \(z=0\)
at the added point. Positivity of \(x_3\) gives
\(x_3=-a\log|z|+H_0\), with \(a\ge0\). The Gauss map is meromorphic and
\(\omega\) has at most a simple pole. If \(\Phi\) had at most a simple pole,
we could write
\[
 \Phi=\left(\frac Bz+O(1)\right)dz,\qquad B\in\mathbb C^3.
\]
Single-valuedness of \(X\) gives, for small \(\epsilon>0\),
\[
 0=\operatorname{Re}\int_{|z|=\epsilon}\Phi
  =\operatorname{Re}(2\pi iB)=-2\pi\operatorname{Im}B,
 \qquad B\in\mathbb R^3.
\]
Conformality gives
\[
 \Phi\cdot\Phi=0\quad\Longrightarrow\quad
 B_1^2+B_2^2+B_3^2=0.
\]
Since \(B\) is real, \(|B|^2=0\), so \(B=0\). The metric would then be
bounded near \(0\), contradicting completeness. Thus \(\Phi\) has a pole
of order \(m+1\ge2\), and integration gives
\[
 X(z)=\operatorname{Re}(Az^{-m})+o(|z|^{-m}),
 \qquad m\ge1,\quad A\cdot A=0,\quad A\ne0.
\]
Embeddedness and \cite{Schoen83} give \(m=1\).
The half-space condition implies
\[
 x_3(z)=\operatorname{Re}\frac{A_3}{z}
       +o(|z|^{-1})\ge0
 \quad\Longrightarrow\quad
 A_3=0,\qquad \nu_\infty=\pm E_3.
\]
The end parametrizations give
\[
 \lim_{R\to\infty}
 \frac{\operatorname{Area}(E\cap B_R)}{\pi R^2}
 =
 \begin{cases}
  1, & E\text{ is an interior end},\\
  \frac{1}{2}, & E\text{ is a boundary end}.
 \end{cases}
\]
Hence
\[
 \lim_{R\to\infty}
 \frac{\operatorname{Area}(M\cap B_R)}{\pi R^2}
 =
 \sum_E\lim_{R\to\infty}
 \frac{\operatorname{Area}(E\cap B_R)}{\pi R^2}
 =e_i+\frac{e_b}{2}.
\]
\end{proof}

\end{document}